\DeclareRobustCommand*\cal{\@fontswitch\relax\mathcal}
\def\bb{\mathbb}
\newcommand{\comment}[1]{}
\newcommand{\EEA}{\end{eqnarray}}
\newcommand{\BEA}{\begin{eqnarray}}
\newcommand{\peq}{p_{eq}}
\newcommand{\td}{\textrm{d}}
\newcommand{\OL}{\mathcal{L}}
\newcommand{\rank}{\operatorname{rank}}
\newcommand{\spn}{\operatorname{span}}
\newcommand{\Var}{\operatorname{Var}}
\newtheorem{thm}{Theorem}[section]
\newtheorem{prop}[thm]{Proposition}
\newtheorem{lem}[thm]{Lemma}
\numberwithin{thm}{section}
\newtheorem{defn}[thm]{Definition}
\newtheorem{rem}[thm]{Remark}
\begin{document}

\title{A Parameter Estimation Method Using Linear Response Statistics: Numerical Scheme}

\author{He Zhang$^{1}$\footnote{hqz5159@psu.edu}, Xiantao Li$^{1}$\footnote{xxl12@psu.edu}, and John Harlim$^{1,2}$\footnote{jharlim@psu.edu} \\
  \vspace{0.1in}\\
  $^{1}$ Department of Mathematics,\\ The Pennsylvania State University, University Park, PA 16802, USA.\\
  $^{2}$ Department of Meteorology and Atmospheric Science, \\ Institute for CyberScience \\ The Pennsylvania State University, University Park, PA 16802, USA.\\
}

\maketitle

\begin{abstract}
This paper presents a numerical method to implement the parameter estimation method using response statistics that was recently formulated by the authors. The proposed approach formulates the parameter estimation problem of It\^o drift diffusions as a nonlinear least-squares problem. To avoid solving the model repeatedly when using an iterative scheme in solving the resulting least-squares problems, a polynomial surrogate model is employed on appropriate response statistics with smooth dependence on the parameters. The existence of minimizers of the approximate polynomial least-squares problems that converge to the solution of the true least square problem is established under appropriate regularity assumption of the essential statistics as functions of parameters. Numerical implementation of the proposed method is conducted on two prototypical examples that belong to classes of models with wide range of applications, including the Langevin dynamics and the stochastically forced gradient flows. Several important practical issues, such as the selection of the appropriate response operator to ensure the identifiability of the parameters and the reduction of the parameter space, are discussed. From the numerical experiments, it is found that the proposed approach is superior compared to the conventional approach that uses equilibrium statistics to determine the parameters.

\textbf{Keywords}: Inverse Problem, Linear Response Theory, Surrogate Model, Nonlinear Least-Square.
\end{abstract}

\maketitle

{\bf
\bf In many dynamical systems, some of the model parameters may not be identifiable from their equilibrium statistics. For problems where the parameters can be identified from their two-point statistics, this paper provides a method to numerically estimate these parameters. Mathematically, we formulate the inverse problem as a well-posed dynamic constrained nonlinear least-squares problem that fits a set of linear response statistics under small external perturbations. Since solving this problem directly is computationally not feasible, we consider a polynomial surrogate modeling to approximate this least-squares problem. Convergence of the solution of the approximate problem to the solution of the original nonlinear least-squares solution is established under appropriate regularity assumptions. Supporting numerical examples on two classes of models with a wide range of applications, the Langevin dynamics and the stochastically forced gradient flows, are given. 
}

\section{Introduction}
Parameter estimation is ubiquitous in modeling of nature. The goal in this inverse problem is to infer the parameters from observations with sufficient accuracy so that the resulting model becomes a useful predictive tool. Existing parameter estimation techniques often belong to one of the following two classes of approaches, the maximum likelihood estimation \cite{Pavliotis:16} and the Bayesian inference such as  Markov chain Monte-Carlo (MCMC) \cite{gamerman:06}, depending on the availability of prior information about the parameters. Since the model parameters are usually not directly measured, the success of any inference method depends crucially on the identifiability of the parameters from the given observations. When the dependence of the observations on the parameters are implicit, that is, through the model, sensitivity analysis (see e.g. the review article \cite{bp:16}) is often a useful practical tool to determine the parameter identifiability. 

In this paper, we consider the parameter estimation of ergodic stochastic differential equations driven by Brownian noise. When the corresponding invariant measure of the dynamical systems have an explicit dependence on the parameters, then these parameters can usually be inferred from appropriate equilibrium statistical moments. A popular approach that exploits this idea is the reverse Monte Carlo method \cite{lyubartsev1995calculation} developed in Chemistry. In particular, it formulates an appropriate nonlinear least-squares system of integral equations by matching the  equilibrium averages of some pre-selected observables. Subsequently, Newton's iterations are used to estimate the parameters. At each iterative step, samples at the current parameter estimate are generated for Monte-Carlo estimation to approximate the least-squares integral equations and the corresponding Jacobian matrix. In practice, this method can be rather slow due to the repeated sampling procedure.   
A severe limitation of this approach is that it is restrictive to inference of parameters of the equilibrium density.

This limitation can be overcome by fitting appropriate two-point statistics. As shown in our previous work \cite{HLZ:17}, we formulated the parameter estimation problem based on the linear response statistics subjected to an external forcing, which drives the system out of equilibrium. The fluctuation-dissipation theory (FDT), a hallmark in non-equilibrium statistical physics, suggested that the changes of the average of an observable under small perturbations can be approximated by appropriate two-point statistics, called the FDT response operators \cite{Toda-Kubo-2}. The key point is that these FDT response operators can be estimated using the available samples of the equilibrium {\it unperturbed} dynamics so long as we know the exact form of the invariant measure of the dynamics, which we will assume to be the case for a large class of problems, such as the Langevin dynamics and stochastic gradient flows. We should point out that the proposed approach relies on the validity of FDT response statistics, which has been studied rigorously for a large class of stochastic system that includes the stochastic differential equation (SDE) setup in this paper \cite{hairer2010simple}. For deterministic dynamics, one can use for example the statistical technique introduced in \cite{gww:2016} to verify the validity of the FDT linear response. Following the ideas from \cite{qm:16a,mq:16a,mq:17}, we developed a parameter inference method using these response operators in \cite{HLZ:17}. While the method in \cite{qm:16a,mq:16a,mq:17} involves minimizing an information-theoretic functional that depends on both the mean and variance response operators, our approach fits a finite number of quantities, to be referred
to as the {\it essential statistics}, which would allow us to approximate the FDT response operators, of appropriate observables (beyond just the mean and variance). 

In our previous work \cite{HLZ:17}, we showed the well-posedness of the formulation on { three examples}, in the sense that under infinite sampling, appropriate choices of essential statistics will ensure the identifiability of the model parameters. {  On one of these examples, the simplified model for turbulence \cite{majda2016}, which is a nonlinear system of SDEs with a Gaussian invariant measure, we were able to explicitly determine the choice of observables and external forcings (which in turn determine the FDT response operators) that allow one to identify all of the model parameters uniquely. In this case, one can directly estimate the parameters by solving the system of equations involving these essential statistics. While this result suggests that the choice of observables and external forcings is problem dependent, the FDT theory provides a guideline for choosing the appropriate two-point statistics for a well-posed parameter estimation. On another example, the Langevin dynamics (which will be discussed in Section~\ref{sec4}), while the parameters can be identified by the essential statistics as shown in \cite{HLZ:17}, the formulation suggests that one should include essential statistics that involve higher-order derivatives of the FDT response operators.  This can be problematic because these higher-order statistics are rarely available in practice unless the available data are solutions of high-order SDE solvers. Given this practical issue, we propose to fit only the essential statistics that include the zeroth- and first-order derivatives of the FDT response operator  ( and if possible, to use only the zeroth-order derivative information). The price paid by restricting to only the lower-order essential statistical information is that the dependence of the essential statistics on some of the model parameters becomes implicit through the solutions of the dynamical model and the identifiability of these parameters becomes questionable.}
 
{  In this paper, we devise a concrete numerical method to estimate the parameters by solving dynamic-constrained least-squares problems of integral equations involving these low-order essential statistics.} As one would imagine, the implementation of this approach will face several challenges. First, just like in the reverse Monte-Carlo method, naive implementation of an iterative method for solving this dynamically constrained least-squares problem requires solving the model repeatedly. To avoid solving the model repeatedly in the minimization steps, we employ a polynomial surrogate model approach \cite{Marzouk:07,Marzouk:09} on the least-squares cost function that involves the essential statistics. This approach is motivated by the fact that the cost function (or effectively the essential statistics) is subjected to sampling error and that the essential statistics have smooth dependence on the parameters, assuming that the Fokker-Planck operator of the underlying dynamics has smooth dependence on the parameters. Under appropriate regularity assumption on the essential statistics, we will provide a theoretical guarantee for the existence of minimizers of the approximate polynomial least-squares problem that converge to the solution of the true least-squares problem. { The second related issue, as mentioned above, is that certain (lower-order) essential statistics might not be sensitive enough to some of the parameters, which will lead to inaccurate estimations.} To ensure the practical identifiability of the parameters, we employ an empirical a priori sensitivity analysis based on the training data used in constructing the polynomial surrogate models and a posteriori local sensitivity analysis to ensure the validity of the a prior sensitivity analysis. 

{  While the proposed technique can be used to infer all of the model parameters, in practice, one should use the polynomial approximation to infer parameters that cannot be estimated directly to avoid the curse of dimensionality. This issue is a consequence of using the polynomial expansion in approximating the least-squares problem, where the required training data set for  constructing the surrogate model increases exponentially as a function of the parameter dimension. In the numerical examples below, we will apply the approximate least-squares problems to estimate parameters that cannot be directly estimated from matching equilibrium and/or two-point statistics that involve the first-order derivative of the FDT response operator.}

The rest of the paper is organized as follows. In Section \ref{sec2}, we briefly review the concept of essential statistics and parameter inference method using the linear response statistics developed in \cite{HLZ:17}. In Section \ref{sec3}, we present the proposed numerical algorithm based on the polynomial based surrogate model, discuss its convergence (with detailed proof in the Appendices). In Sections~\ref{sec4} and \ref{sec5}, we show applications on two nonlinear examples, a Langevin model and a stochastic gradient system with a triple-well potential, respectively. In Section \ref{sec6}, we conclude the paper with a summary and discussion.

\section{A Review of Essential Statistics}\label{sec2}
We begin by reviewing the parameter estimation formulation introduced in our previous paper \cite{HLZ:17}. Consider an $n-$dimensional It\^o diffusion
\begin{equation}\label{true_model}
\td X= b(X;\theta)\td t+ \sigma(X;\theta) \td W_{t},
\end{equation}
where the vector field $b(X;\theta)$ denotes the drift, and $\sigma(X;\theta)$ is the diffusion tensor. Both coefficients are assumed to be smooth functions in $X$ and $\theta$; $W_{t}$ represents the standard Wiener process, and the variable $\theta\in D$ contains model parameters where $D\subset \bb{R}^{N}$ is the parameter domain. In this paper we only consider the case where $D$ is bounded and for simplicity we assume $D=[-1,1]^{N}$. Throughout this manuscript, we denote the underlying true parameter value as $\theta^{\dagger}$ and the corresponding estimate as $\hat{\theta}$. We assume that equation (\ref{true_model}) is ergodic (see \cite{Mattingly:02} for precise conditions) with equilibrium density $p_{eq}(x;\theta)$ for all $\theta\in D$, and $p_{eq}(x;\theta)$ smoothly depends on both $x$ and $\theta$. When $\theta=\theta^{\dagger}$ we assume that we have the access to the explicit formula of $p_{eq}(x;\theta^{\dagger})$ as a function of $x$ only, which is shortened as $p_{eq}^{\dagger}(x)$. As we shall see in many applications, $p_{eq}(x;\theta)$ only depends on a subset of the parameters $\theta$.

The main idea of the parameter estimation formulation introduced in \cite{HLZ:17} is to infer the parameters from the linear response operator associated with the fluctuation dissipation theory (FDT). Recall that, in the FDT setting we are interested in an order-$\delta$ ($0<\delta\ll 1$) external perturbation of the form $f(x,t) = c(x)\delta f(t)$, such that the solutions of the perturbed dynamics
\begin{eqnarray}
{  \td X^\delta = \big(b(X^\delta,\theta) + c(X^\delta)\delta f(t)\big)\td t +\sigma(X^\delta,\theta)\td W_t,\nonumber}
\end{eqnarray}
which would otherwise remain at the equilibrium state of the unperturbed dynamics \eqref{true_model}, can be characterized by a perturbed density $p^\delta(x,t;\theta)$. The density function is governed by the corresponding Fokker-Planck equation under initial condition $p^{\delta}(x,0;\theta)=p_{eq}(x;\theta)$.

By a standard perturbation technique, e.g., \cite{Pavliotis:16}, the difference between the perturbed and unperturbed statistics of any integrable function $A(x)$ can be estimated by a convolution integral, that is,
\begin{eqnarray}
\Delta \mathbb{E}[A](t):= \mathbb{E}_{p^\delta} [A(X)](t) - \mathbb{E}_{\peq} [A(X)] =  \int_0^t k_A(t-s)\delta f(s)\,ds + \mathcal{O}(\delta^2).\label{responsestat}
\end{eqnarray}
In \eqref{responsestat}, the term $k_A(t)$ is known as the linear response operator. The FDT formulates the linear response operator as the following two-point statistics
\begin{equation}\label{RA}
k_A(t;\theta):= \mathbb{E}_{\peq}[A(X(t))\otimes B(X(0);\theta)], \;\mbox{ with }\; B_{i}(X;\theta) := -\frac{\partial_{X_i} (c_i(X)\peq(X;\theta))}{\peq(X;\theta)},
\end{equation}
where $B_{i}$ and  $c_i$ denote the $i^{\text{th}}$ components of $B$ and $c$, respectively. We should point out that the validity of the FDT has been studied rigorously under mild conditions \cite{hairer2010simple}.  A more explicit form is given by
\begin{equation}\label{explicit_form}
k_A(t;\theta)=\int_{\bb{R}^{n}}\int_{\bb{R}^{n}} A(x)\otimes B(y) \rho(x,t|y,0)p_{eq}(y;\theta)\td x\td y, 
\end{equation}
where $\rho$ is the solution of the Fokker-Planck equation
\begin{equation}\label{fpe}
\frac{\partial}{\partial t} \rho=\OL^{*} \rho, \quad \rho(x,0|y,0)=\delta(x-y).
\end{equation}
Here $\OL$ denotes the generator of the unperturbed dynamics \eqref{true_model}, and $\OL^{*}$ is its adjoint operator in the $L^{2}(\mathbb{R}^n)$.

\begin{rem}\label{smooth_dep}
The result in \cite{Singler:08} states that if the coefficients of linear parabolic PDE's are $C^k$ as functions of $\theta$, then the weak solutions are also $k$-time differentiable with respect to $\theta$. In our case, if $b,\sigma\in C^k([-1,1]^N)$, then the linear response operator $k_{A}(t;\theta)$ \eqref{explicit_form} is also $C^{k}$ with respect to the parameters $\theta$ under the mild assumption that $p_{eq}(x;\theta)$ is smooth with respect to $\theta$. In Section~\ref{subsec:con}, we will conduct a convergence analysis on the proposed parameter estimation method to determine the necessary value of $k$, the smoothness of the linear response operators as functions of $\theta$.
\end{rem}

Notice that $B^{\dagger}(X):=B(X;\theta^{\dagger})$ can be determined analytically based on our assumption of knowing the explicit formula of $p^{\dagger}_{eq}$. Given $t$, the value of $k_{A}(t;\theta^{\dagger})$ can be computed using a Monte-Carlo sum based on the time series of the unpertubed system, $X$, at $p_{eq}^{\dagger}(x)$. Therefore, the linear response operator can be estimated without knowing the underlying unperturbed system in \eqref{true_model} so long as the time series of $X$ at $p_{eq}^{\dagger}(x)$ and the explicit formula for this equilibrium density is known.

The FDT, along with the expression of the kernel function \eqref{RA}, has been implemented to {\it predict} the change of the expectation of the observable $A$ \cite{leith:75,gbm:08,am:08,am:09,mw:10,am:12}. The advantage is that the response operator is defined with respect to the equilibrium distribution, which does not involve the prior knowledge of the perturbed density $p^{\delta}$. In statistical physics, the FDT is a crucial formulation to derive transport coefficients, e.g., electron conductivity, viscosity, diffusion coefficients, etc \cite{Toda-Kubo-2}. In the current paper, however, we propose to use the response statistics to infer the underlying true parameter value $\theta^{\dagger}$.

Since this linear response operator is in principle infinite-dimensional, it is necessary for practical purposes to introduce a finite-dimensional approximation. The following parametric form of the response operator has been motivated by the rational approximation of the Laplace transform \cite{Ma:16}. To explain the idea, consider the Laplace transformation of $k_{A}(t;\theta)$ denoted by $K(s;\theta)$, which can be approximated by a rational function in the following form
\begin{eqnarray}
K(s;\theta)\approx (I-s^{-1}\beta_{1}-\cdots- s^{-m}\beta_{m})^{-1}(s^{-1}\alpha_{1}+\cdots +s^{-m}\alpha_{m}), \quad \alpha_i, \beta_i \in \mathbb{R}^{n\times n}. \nonumber
\end{eqnarray}
In the time domain, $k_A(t;\theta)$ can be written explicitly as,
\begin{eqnarray}
k_A(t;\theta) \approx g_m(t;\theta):= \begin{pmatrix}I & 0 & \cdots & 0\end{pmatrix} e^{t G
}\begin{pmatrix}\alpha_1 \\ \alpha_2 \\ \vdots \\ \alpha_m \end{pmatrix}, \quad\quad \text{with } G=\begin{pmatrix}
\;\beta_1\;\; &\; I  \;\;&\; \;\; & \;\; \\ 
\;\beta_2 \;& \;0 \;& \;\ddots \;& \;\; \\
\;\vdots \;&  \;\;& \;\ddots \;&\; I\;   \\
\;\beta_m\; &  \;\;&\; \;&  \; 0\;
\end{pmatrix},\label{eq: ansatz}
\end{eqnarray}
where $I$ denotes the $n$-by-$n$ identity matrix. Here $m$ stands for the order of the rational approximation. In \cite{Ma:16}, the coefficients in the rational approximations were determined based on certain interpolation conditions, and such a rational approximation has been proven to be an excellent approximation for the time correlation function.  For the current problem,  $g_{m}$ will be determined using finitely many essential statistics defined as follows.

\begin{defn}\label{ess_stat}
The values $\{ k_A^{(j)}(t_i;\theta^{\dagger})\}_{i=1,2,\ldots, K}$ are called \textit{essential statistics} if they are sufficient to approximate the response operator $k_A(t;\theta^{\dagger})$ up to order-$m$ using \eqref{eq: ansatz}, for $j =0,\ldots, 2m-1$ and $t_{1}<t_{2}\cdots <t_{K}$, where $j$ indicates the order of derivatives.
\end{defn}

\begin{rem}
In \cite{HLZ:17}, we have considered the derivatives of $k_A$ at $t=0$ for a Langevin dynamics model, and explicit formulas involving the model parameters have been found. Theoretically, all the parameters can be identified from  $\{ k_A^{(j)}(0^+;\theta^{\dagger})\}$, which to some extent shows the consistency.  However, in practice, we use only the derivative of order no greater than one, since estimating higher order derivative of $k_{A}(t,\theta^{\dagger})$ requires time series $X(t_i)$ with sufficient accuracy, which is not necessarily available. By using the essential statistics correspond to lower-order derivatives at $t_i$ other than zero, we give up the explicit expressions of the higher-order statistics with respect to the parameters. This prompted us to solve the problem in the least-squares sense. Furthermore, it requires other more practical methods, such as a sensitivity analysis test, to determine the identifiability of the parameters from these statistics.
\end{rem}

\begin{figure}
\centering
\includegraphics[width=.5\textwidth]{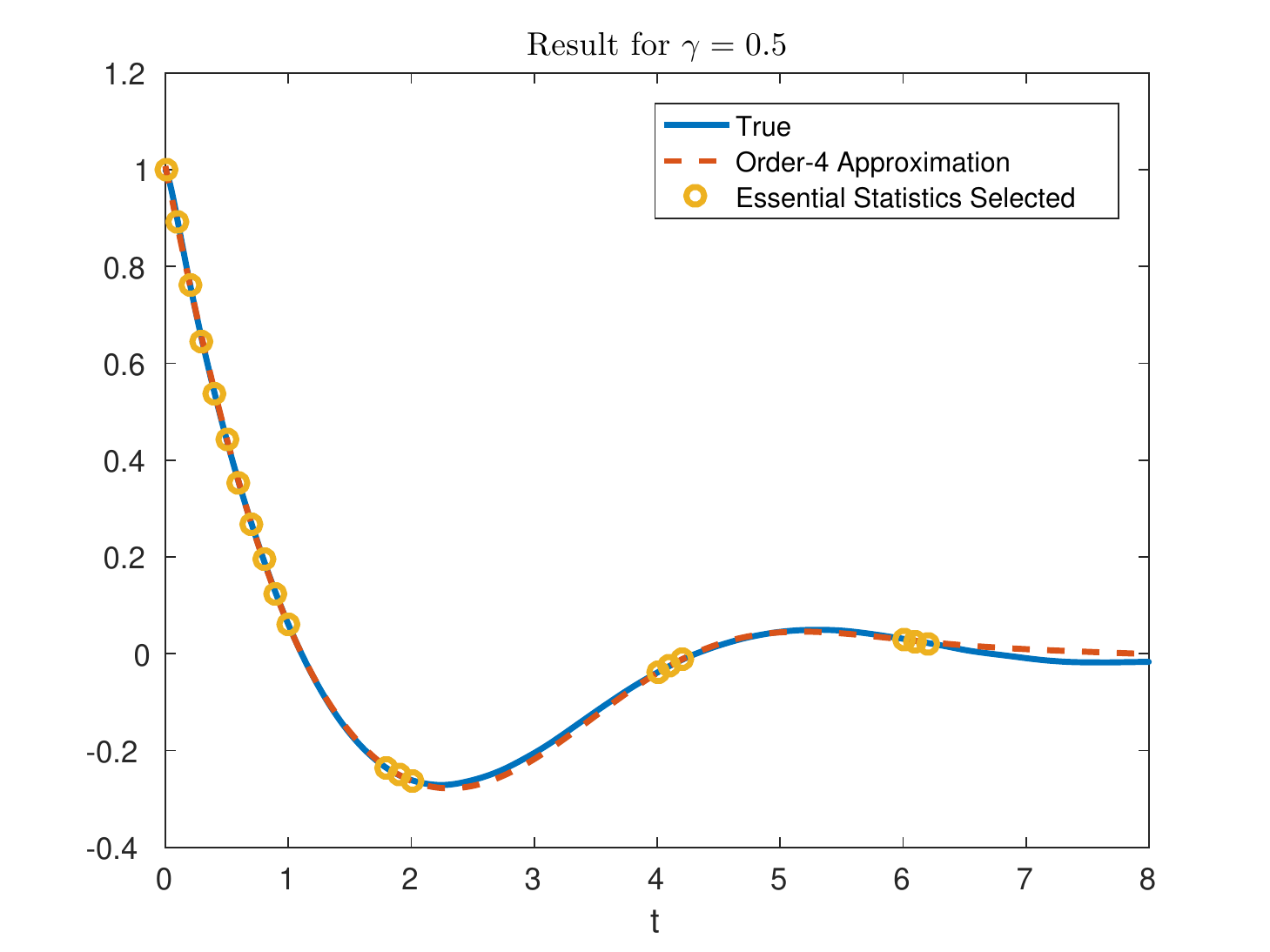}
\caption{The response function (solid lines) and its order-4 approximation (dashed lines).}
\label{eff_rep}
\end{figure}

Figure \ref{eff_rep} shows the performance of such a discrete representation, where the linear response operator arises from a Langevin model subjected to a constant external forcing \eqref{esst_lan}, which will be discussed in Section \ref{sec4}. With such discrete representation we reduce our problem of inferring from an infinite-dimensional $k_{A}(t;\theta^{\dagger})$ to a finite number of essential statistics. To derive a system of equations involving both $\theta$ and those essential statistics we introduce
\begin{eqnarray}
\hat{k}_A(t;\theta):= \mathbb{E}_{{p}_{eq}(\theta)}[A(X(t)) \otimes B^{\dagger}(X(0))].\label{hatkA}
\end{eqnarray}
We also define for $j=0,1,$
\begin{eqnarray}
M_{j}(t_{i}):=k^{(j)}_{A}(t_{i};\theta^{\dagger}) = \frac{\td^{j} }{\td t^{j}}\mathbb{E}_{{p}_{eq}^\dagger}[A(X(t_i)) \otimes B^{\dagger}(X(0))]. \nonumber
\end{eqnarray}
We should stress that \eqref{hatkA} is not the FDT response, since the $B^{\dagger}(x)$ in \eqref{hatkA} is defined with respect to $\peq^{\dagger}$. The key idea in \cite{HLZ:17} is to estimate the true parameter values $\theta^{\dagger}$ by solving
\begin{eqnarray} \label{nonlin_sys}
M_{j}(t_{i})=\hat{k}^{(j)}_A(t_{i};\theta), \quad j\in\{0,1\}, \quad i=1,2,\dots,K.
\end{eqnarray}
Here, the term on the left-hand-side is estimated from the available sample at $p_{eq}^{\dagger}$, and the term on the right-hand-side will be  computed from the time series generated by solving \eqref{true_model} for a given $\theta\in D$.

\begin{rem}\label{remark}
Since $p_{eq}^{\dagger}$ usually depends on a subset of the true parameter value $\theta^\dagger$, the assumption of knowing the explicit form of $p_{eq}^{\dagger}$ as a function of $x$ only is too strong. In general, we cannot even compute the left-hand-side term in \eqref{nonlin_sys} since the term $B^{\dagger}$, which may depend on the unknown true value $\theta^{\dagger}$, is not available to us. We will discuss how to address such issue on two examples in Sections~\ref{sec4} and \ref{sec5}, where each of them belongs to a  class of models with a wide range of applications.
\end{rem}

To ensure the solvability, we assume that the total number of equations in (\ref{nonlin_sys}) is always greater than the dimension of the unknown parameters $\theta$. For this over-determined scenario, we propose to solve (\ref{nonlin_sys}) in the least-squares sense, addressing the practical issue raised in \cite{HLZ:17} {  when the explicit solutions are not available.} The main difficulty in solving the nonlinear least-squares problem comes from evaluating $\hat{k}^{(j)}_A(t_{i};\theta)$, which often requires solving the model (\ref{true_model}) repeatedly. In section \ref{sec3}, we will propose an efficient numerical method to address this issue.

\section{An Efficient Algorithm for Parameter Estimation} \label{sec3}

In section \ref{sec2}, we formulated a parameter estimation method as a problem of solving a nonlinear system (\ref{nonlin_sys}) subject to a dynamical constraint in \eqref{true_model}. In a compact form, we denote the system \eqref{nonlin_sys} as
\begin{equation} \label{nonlin_eqs}
{  f_{i}(\theta):= M_0(t_{i}) - \hat{k}_A(t_{i};\theta) =  0},\quad i=1,\dots,K,
\end{equation}
where $\theta\in [-1,1]^{N}$, $K>N$. { Notice that we have neglected the derivative constraints in \eqref{nonlin_sys}. While similar algorithm and convergence result as shown below also hold if the first-order derivative constraints are included in the least-squares procedure, we will only numerically solve the least-squares problem for the zeroth-order derivative constraints as in \eqref{nonlin_eqs}. As we shall see in Sections~\ref{sec4} and \ref{sec5}, we will use the derivative constraints to directly estimate some of the parameters.}

Our goal is to solve \eqref{nonlin_eqs} in the sense of least-squares, that is,
\begin{equation}\label{nonlin_ls}
\min_{\theta\in [-1,1]^{N}} \sum_{i=1}^{K} f^{2}_{i}(\theta).
\end{equation}
However, as we mentioned in section \ref{sec2}, we do not necessarily have the explicit expressions for $f_{i}(\theta)$ and evaluating $f_{i}$ for certain values of $\theta$ requires solving the true model \eqref{true_model} to approximate the integral in $f_{i}(\theta)$ by a Monte-Carlo sum. For example, if we apply the Gauss-Newton method to solve the nonlinear least-squares problem (\ref{nonlin_ls}), the overall computation will become very expensive since we have to  solve the  model \eqref{true_model} repeatedly. As a remedy, we apply the idea of polynomial based surrogate model, {  which was introduced in \cite{Marzouk:07,Marzouk:09} for solving Bayesian inference problems,} to alleviate this part of computation. {  The main difference of our approach from  \cite{Marzouk:07,Marzouk:09} is that they used the orthogonal polynomials to expand the dynamical model, $X(t,\theta)$, whereas here, we will use the orthogonal polynomials to expand  $f_i(\theta)$. While the convergence of the approximate posterior density estimate of the Bayesian inference problem is established in \cite{Marzouk:09}, we will end this section by analyzing the convergence of the approximate least-squares problem.}

\subsection{Polynomial Based Surrogate Model}

The key idea of our method is to approximate $f_{i}(\theta)$ by a polynomial function $f^{M}_{i}(\theta)$. This is motivated by the fact that the  sampling error cannot be avoided in computing the value of essential statistics, that is, there is uncertainty on the left-hand-side of \eqref{nonlin_sys}. Thus, it is more reasonable to think of the nonlinear least-squares solution of \eqref{nonlin_ls} as a random variable with certain distribution $\mu$ over $[-1,1]^{N}$. Furthermore, since $\{f_{i}(\theta)\}$ are $C^k$ with respect to $\theta$ (Remark~\ref{smooth_dep}) whenever $b$ and $\sigma$ in \eqref{true_model} are $C^k$ with respect to $\theta$, polynomial chaos expansion provides a natural choice of $f^{M}_{i}(\theta)$ based on the orthogonal polynomials with respect to $\mu$. 

For instance, if we consider $\mu\sim U[-1,1]$, i.e., the uniform distribution, then the corresponding orthogonal polynomials are the Legendre polynomials $\{p_{n}\}$. They are given by $p_{0}=1$, $p_{1}=x$ and the recursive formula
\begin{equation}
(n+1)p_{n+1}(x)=(2n+1)xp_{n}(x)-np_{n-1}(x), \nonumber
\end{equation} 
which form a basis for $L^{2}([-1,1],\mu)$. The orthogonality of $\{p_{n}\}$ over $L^{2}([-1,1],\mu)$ is described by
\begin{equation}
\int^{1}_{-1}p_{m}(x)p_{n}(x) \td \mu(x)=\frac{2}{2n+1}\delta_{mn}. \nonumber
\end{equation}
One can normalize $p_n$ by introducing $P_{n}:= p_{n}\sqrt{n+\frac{1}{2}}$.

For multi-dimensional cases, one can define $P_{\vec{k}}(\theta):=\prod_{j=1}^{N}P_{k_j}(\theta_{j})$ ($N$ represents the dimension of $\theta$), where $\vec{k}\geq \vec{0}$ is a multi-index with $\vec{k}=(k_{1},\dots,k_{N})\in\{0,1,2,\dots\}^{N}$ and $\theta=(\theta_{1},\dots,\theta_{N})\in [-1,1]^{N}$. We can consider an order-$M$ approximation of $f_{i}(\theta)$
\begin{equation} \label{expansion}
f_{i}(\theta_{1},\dots,\theta_{N})\approx \sum_{\|\vec{k}\|_{\infty}\leq M} \alpha^{(i)}_{\vec{k}}P_{\vec{k}}(\theta)=:f^{M}_{i}(\theta).
\end{equation}
In practice, there are two approaches to determine the coefficient $\alpha^{(i)}_{\vec{k}}$. The first approach is the Galerkin method, which requires that the errors in the approximation in \eqref{expansion} to be orthogonal to the finite-dimensional subspace of $L^{2}([-1,1],\mu)$ generated by $\{P_{\vec{k}}(\theta)\}_{\|\vec{k}\|_{\infty}\leq M}$. In our convergence analysis, we will deduce the error estimates based on the coefficients obtained through this Galerkin (or least-squares) fitting.
Alternatively, one can determine the coefficient $\alpha_{\vec{k}}^{(i)}$ by the collocation method, that is, matching the values of $f_{i}$ at certain points of $\theta$ in $D$, denoted by $\Theta$, which leads to the following linear equations
\begin{equation}\label{collocation}
f_{i}(\theta)=\sum_{\|\vec{k}\|_{\infty}\leq M} \alpha^{(i)}_{\vec{k}}P_{\vec{k}}(\theta), \quad \theta\in \Theta.
\end{equation}
Since \eqref{collocation} is equivalent to a polynomial interpolation, a common choice of $\Theta$ is the product space of order-$M_C$  Chebyshev nodes, where $M_C\geq M+1$ (when $M_C > M+1$, \eqref{collocation} turns into a linear least-squares problem). In the numerical examples in this paper, we will use collocation method to determine the coefficients.

Replacing $f_{i}(\theta)$ in \eqref{nonlin_ls} by $f_{i}^{M}(\theta)$, we obtain a new least-squares problem for the order-$M$ polynomial based surrogate model,
\begin{equation}\label{poly_ls}
\min_{\theta\in [-1,1]^{N}} \sum_{i=1}^{K} (f^{M}_{i})^{2}(\theta).
\end{equation}
For easier notations, let us use $\textbf{f}^{M}(\theta)$ and $J^{M}(\theta)$ to denote the vector-valued function, $(f^{M}_{1}(\theta),\dots, f^{M}_{N}(\theta))^{\top}$, and its Jacobian matrix, respectively. 

In summary, we have the following algorithm \ref{alg:surrogate} based on the Gauss-Newton method.

\begin{algorithm}
\caption{Parameter Estimation: Polynomial Based Surrogate Model}
\label{alg:surrogate}
\begin{algorithmic}
\STATE{Let $\Theta$ be a set of collocation nodes in $[-1,1]^{N}$.}
\STATE{1. For each $\theta \in \Theta$, solve \eqref{true_model}.}
\STATE{2. Estimate $f_{i}(\theta)$ using a Monte-Carlo sum over $i=1,\dots,K$.}
\STATE{3. Compute the coefficients $\alpha^{(i)}_{\vec{k}}$ by solving the linear system \eqref{collocation} and obtain the approximations $f^{M}_{i}(\theta)$ for $i=1,\dots,K.$}
\STATE{4. For an initial guess $\theta^{0}\in [-1,1]^{N}$ and a threshold $\delta>0$. Compute }
\STATE{
	\begin{equation}\label{G-N}
	\theta^{k}=\theta^{k-1}-[(J^{M}(\theta^{k-1}))^{\top}J^{M}(\theta^{k-1})]^{-1}(J^{M}(\theta^{k-1}))^{\top}\textbf{f}^{M}(\theta^{k-1}).
	\end{equation}
}
\WHILE{The step length $\|\theta^{k}-\theta^{k-1}\|\geq \delta$}
\STATE{Repeat \eqref{G-N}.}
\ENDWHILE\\
\RETURN {$\theta^{k}$}
\end{algorithmic}
\end{algorithm}

\comment{

\begin{algorithm}[H]
\caption{Parameter Estimation: Polynomial Based Surrogate Model}
\label{alg:surrogate}
\begin{algorithmic}
\State{For the Galerkin method, we introduce a sparse grid over $[-1,1]^{N}$ denoted by $G$(or a set of nodes over $[-1,1]^{N}$ denoted by $\Theta$ for the collocation method).}
\State{1. For each $\theta\in G$ (or $\theta \in \Theta$), solve \eqref{true_model}.}
\State{2. Estimate $f_{i}(\theta)$ using a Monte-Carlo sum over $i=1,\dots,K$.}
\State{3. Compute the coefficients $\alpha^{(i)}_{\vec{k}}$ by (\ref{coef}) using a sparse grid quadrature scheme (or by solving the linear system \eqref{collocation}), and obtain the approximations $f^{M}_{i}(\theta)$ for $i=1,\dots,K.$}
\State{4. For an initial guess $\theta^{0}\in [-1,1]^{N}$ and a threshold $\delta>0$. Compute }
\State{
	\begin{equation}\label{G-N}
	\theta^{k}=\theta^{k-1}-[(J^{M}(\theta^{k-1}))^{\top}J^{M}(\theta^{k-1})]^{-1}(J^{M}(\theta^{k-1}))^{\top}\textbf{f}^{M}(\theta^{k-1}).
	\end{equation}
}
\While{The step length $\|\theta^{k}-\theta^{k-1}\|\geq \delta$}
\State{Repeat \eqref{G-N}.}
\EndWhile\\
\Return {$\theta^{k}$}
\end{algorithmic}
\end{algorithm}
}

Computationally, the most expensive step is in solving \eqref{true_model} on the collocation nodes $\Theta$, which can be done in parallel.

\subsection{The Convergence of the Approximate Solutions}\label{subsec:con} 

In this section, we will provide the convergence analysis of the solutions of the approximate least-squares problem in   \eqref{poly_ls} to the solution of the true least-squares problem in \eqref{nonlin_ls} as $M \to \infty$, under appropriate conditions. 
We denote the solution of \eqref{poly_ls} by $\theta^{*}_{M}$, that is,
\begin{equation*}
\theta^{*}_{M}:= \arg\min_{\theta\in [-1,1]^{N}} \sum_{i=1}^{K} (f^{M}_{i})^{2}(\theta).
\end{equation*}
Since each $f_{i}^{M}$ is a multivariate polynomial, the minimum \eqref{poly_ls} can be attained in $[-1,1]^{N}$; but such $\theta^{*}_{M}$ may not be unique. For the case of $K=N$ and sufficiently smooth $f_i$, assuming that the true parameter value $\theta^{\dagger}\in (-1,1)^{N}$ is the unique solution of the true least-squares problem in \eqref{nonlin_ls}, we can show that there exists a sequence of minimizers of the approximated problem \eqref{poly_ls} $\{\theta^{*}_{M}\}$ such that $\theta^{*}_{M}\rightarrow \theta^{\dagger}$ as $M\rightarrow+\infty$. To be more precise, 
 
\comment{Before we apply numerical method to solve the problem, a fundamental question will be: assuming $\theta^{\dagger}\in (-1,1)^{N}$, the true parameter value, is the unique solution of \eqref{nonlin_ls} can we find a sequence of minimizers of the approximated problem \eqref{poly_ls} $\{\theta^{*}_{M}\}$ such that $\theta^{*}_{M}\rightarrow \theta^{\dagger}$ as $M\rightarrow+\infty$? We are able to prove the following theorem, which partially answers this question.}

\begin{thm}\label{thm:convN}
	Consider the $N$-dimensional nonlinear least-squares problem \eqref{nonlin_ls} and its surrogate model \eqref{poly_ls}. In particular, we let $K=N$, and assume
	\begin{enumerate}
		\item $f_{i}\in C^{\ell}([-1,1]^{N})$, $\ell=\left\lceil \frac{3}{2}N\right\rceil+2$, $i=1,2,\dots,N$, 
		\item Let $\theta^\dagger$ be the solution of \eqref{nonlin_ls} such that $f_{i}(\theta^{\dagger})=0$, $i=1,2,\dots,N$,
		\item the Jacobian matrix of $\textbf{f}:=(f_1,f_2,\dots, f_{N})^{\top}$ at $\theta^{\dagger}$, denoted  by $\nabla \vec{f}(\theta^{\dagger})$,  is invertible.
	\end{enumerate}
    Then there exists a sequence of minimizers $\{\theta^{*}_{M}\}$ such that
    \begin{equation*}
\theta^{*}_{M}   = \text{argmin}_{\theta\in [-1,1]^{N}} \sum_{i=1}^{N} (f^{M}_{i})^{2}(\theta)
    \end{equation*}
    and $\theta^{*}_{M}\rightarrow \theta^{\dagger}$ as $M\rightarrow +\infty$. Moreover, the residual error $\textbf{f}^{M}(\theta^{*}_{M})$ converges to $0$ as $M\rightarrow +\infty$.
\end{thm}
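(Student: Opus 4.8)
The plan is to reduce the theorem to two essentially independent facts: first, that the polynomial surrogates converge to the true residual maps \emph{together with their first derivatives}, i.e. $\textbf{f}^{M}\to\textbf{f}$ in $C^{1}([-1,1]^{N})$; and second, that a $C^{1}$-small perturbation of a map possessing a nondegenerate zero still has a zero nearby, which I would locate by a contraction argument. Throughout I write $A:=\nabla\vec f(\theta^{\dagger})$, which is invertible by assumption (3), and I exploit that $\theta^{\dagger}\in(-1,1)^{N}$ is interior.

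First I would establish, under assumption (1), the uniform gradient-level estimate
\[
\|f_{i}^{M}-f_{i}\|_{C^{1}([-1,1]^{N})}\longrightarrow 0,\qquad M\to\infty,
\]
for each $i$. Since $f_i^M$ is the Galerkin (i.e. $L^2(\mu)$-orthogonal) truncation of the tensor-Legendre expansion of $f_i$, this is a spectral approximation estimate: the smoothness $f_i\in C^{\ell}$ controls the decay of the coefficients $\alpha^{(i)}_{\vec k}$, while the normalized tensor basis $P_{\vec k}$ and its gradients grow only polynomially in $\vec k$. Converting the resulting $L^2$-type (or coefficient-summability) bound into uniform control of both the function and its gradient is carried out via a Sobolev embedding of the form $H^{s}\hookrightarrow C^{1}$, valid for $s>\tfrac{N}{2}+1$. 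This is precisely where the exponent $\ell=\lceil\tfrac32 N\rceil+2$ enters: it is the regularity budget needed so that, after paying the derivative loss incurred by the spectral projection and the $\tfrac{N}{2}+1$ demanded by the embedding, the $C^{1}$ error still tends to zero in $N$ dimensions. I expect this multivariate, gradient-level approximation estimate to be the main obstacle; the one-dimensional ingredients are classical, but assembling them into a sharp $N$-dimensional $C^1$ bound with the exact regularity count is the delicate part, presumably relegated to the appendix.

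Granting this convergence, I would define the Newton-type map $T_{M}(\theta):=\theta-A^{-1}\textbf{f}^{M}(\theta)$ on a small closed ball $\bar B(\theta^{\dagger},r)\subset(-1,1)^{N}$. Its derivative is $\nabla T_{M}(\theta)=A^{-1}\big(\nabla\vec f(\theta^{\dagger})-\nabla\textbf{f}^{M}(\theta)\big)$; because $\nabla\vec f$ is continuous (so $\nabla\vec f(\theta)\approx A$ for $\theta$ near $\theta^{\dagger}$) and $\nabla\textbf{f}^{M}\to\nabla\vec f$ uniformly, for $r$ small and $M$ large one has $\|\nabla T_{M}\|\le\tfrac12$ on the ball, so $T_M$ is a contraction. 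Moreover $\|T_{M}(\theta^{\dagger})-\theta^{\dagger}\|=\|A^{-1}(\textbf{f}^{M}(\theta^{\dagger})-\textbf{f}(\theta^{\dagger}))\|\to0$ using $\textbf{f}(\theta^{\dagger})=0$, so for large $M$ the map sends $\bar B(\theta^{\dagger},r)$ into itself. The contraction mapping theorem then yields a unique fixed point $\theta_{M}^{*}\in\bar B(\theta^{\dagger},r)$, i.e. an exact root $\textbf{f}^{M}(\theta_{M}^{*})=0$, with the quantitative estimate $\|\theta_{M}^{*}-\theta^{\dagger}\|\le 2\|A^{-1}\textbf{f}^{M}(\theta^{\dagger})\|\to0$, whence $\theta_{M}^{*}\to\theta^{\dagger}$.

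Finally, since $\theta_{M}^{*}$ is a common zero of all $f_i^M$, it attains the value $\sum_i (f_i^M)^2(\theta_M^*)=0$, the global minimum of the nonnegative objective in \eqref{poly_ls}, and $\theta_M^*\in(-1,1)^N$ for large $M$, so it is an admissible minimizer. Thus $\{\theta_M^*\}$ is a sequence of minimizers converging to $\theta^{\dagger}$, and its residual $\textbf{f}^{M}(\theta_{M}^{*})$ is identically zero, hence converges to $0$. (For an arbitrary minimizer the residual bound still follows from $\min_{\theta}\sum_i (f_i^M)^2\le \sum_i (f_i^M(\theta^{\dagger}))^2\to0$.) The one point requiring care is that this construction \emph{selects} a single convergent sequence of minimizers, which is exactly what the statement asserts, rather than claiming convergence of every minimizer, since the surrogate objective may well have additional zeros away from $\theta^{\dagger}$.
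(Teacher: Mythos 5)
Your proposal is correct, and its core existence argument takes a genuinely different route from the paper's. The paper proves the same $C^{1}$-convergence $\textbf{f}^{M}\to\textbf{f}$, $\nabla\textbf{f}^{M}\to\nabla\textbf{f}$ (its Lemmas~\ref{lem:p2}, \ref{lem:p3}, \ref{lem:p5}) by summing Legendre coefficient tails directly against sup-norm bounds $\|P_{\vec{k}}\|_{\infty}\leq(\lambda+\tfrac12)^{N/2}$ and $\|P'_{n}\|_{\infty}\leq\tfrac12 n(n+1)\sqrt{n+\tfrac12}$; your Sobolev-embedding detour ($H^{s}\hookrightarrow C^{1}$) is unnecessary once you have coefficient decay and polynomial growth of the basis norms, and making it rigorous for Legendre expansions (which live naturally in weighted Sobolev spaces) would be more delicate than the paper's direct summation, but the target estimate and the regularity count $\ell=\lceil\tfrac32 N\rceil+2$ are exactly right. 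Where you genuinely diverge is the existence step: the paper shifts the surrogate by its own minimum value vector, $F^{M}:=\textbf{f}^{M}-\textbf{e}^{M}$, and invokes an Ortega--Rheinboldt solvability lemma (Lemma~\ref{lem:p4}) to place a zero of $F^{M}$ (hence a minimizer of \eqref{poly_ls}) near $\theta^{\dagger}$, then proves convergence via an averaged-Jacobian mean-value argument $J(\theta;\theta^{\dagger})=\int_{0}^{1}\nabla\textbf{f}(\theta^{\dagger}+t(\theta-\theta^{\dagger}))\,\td t$. You instead run a Banach fixed-point argument on the frozen-Jacobian Newton map $T_{M}(\theta)=\theta-A^{-1}\textbf{f}^{M}(\theta)$, which essentially inlines a proof of the solvability lemma rather than citing it, and it buys you more: an \emph{exact} zero of $\textbf{f}^{M}$ near $\theta^{\dagger}$ (so the surrogate minimum is exactly zero for large $M$, strengthening the paper's residual claim), local uniqueness of that zero in the ball, and the quantitative rate $\|\theta^{*}_{M}-\theta^{\dagger}\|\leq 2\|A^{-1}\textbf{f}^{M}(\theta^{\dagger})\|$ without any mean-value machinery. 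Conversely, the paper's shifted-map formulation works directly with whatever the minimizers of \eqref{poly_ls} are, without needing the minimum to vanish. Your closing parenthetical observation, that any sequence of minimizers has residual tending to zero because $\min_{\theta}\sum_{i}(f^{M}_{i})^{2}(\theta)\leq\sum_{i}(f^{M}_{i})^{2}(\theta^{\dagger})\to 0$, is also correct and matches the paper's final estimate; and your caution that only a selected sequence of minimizers (not every minimizer) converges to $\theta^{\dagger}$ is precisely the convention the theorem and the paper adopt.
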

\begin{rem}
{ We set $N=K$ since the existence of the minimizers rely on the technical assumption in Lemma~\ref{lem:p4} below.}  The second and third assumptions provide the well-posedness of the original nonlinear least-squares problem \eqref{nonlin_ls}. Since we discuss the parameter estimation problem under the perfect model setting, the second assumption naturally holds.
\end{rem}
	The proof of theorem \ref{thm:convN} can be found in the Appendix~\ref{appA}. Here we will present the proof for the one-dimensional case ($K=N=1$) to illustrate the main ideas. Notice that in the one-dimensional case, the formula in \eqref{expansion} is reduced to
\begin{equation}\label{eq:oned}
f^{M}(\theta):= \sum_{k=0}^{M} \alpha_{k} P_{k}(\theta), 
\end{equation}
where $\{P_{n}\}$ are normalized Legendre polynomials. We start with two Lemmas, which pertain to the pointwise convergence of $f^{M}$, and the proof can be found in \cite{isaacson2012analysis}.
\begin{lem}\label{lem:p}
Let $f\in C^{\ell}([-1,1])$, then the corresponding least-squares approximation given by \eqref{eq:oned} satisfies
	\begin{equation*}
	\lim_{k\rightarrow +\infty} \alpha_{k}k^{\ell}=0.
	\end{equation*}
\end{lem}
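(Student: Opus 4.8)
The plan is to establish the $o(k^{-\ell})$ decay by reducing everything to \emph{unnormalized} Legendre coefficients and then performing $\ell$ successive integrations by parts, each trading one order of smoothness of $f$ for one factor of size $O(k^{-1})$. Since $\{P_k\}$ is an orthonormal basis, the least-squares coefficient in \eqref{eq:oned} is the orthogonal projection $\alpha_k=\langle f,P_k\rangle$. Writing $c_k:=\int_{-1}^{1}f(x)p_k(x)\,dx$ for the unnormalized coefficient, the normalization $P_k=p_k\sqrt{k+\tfrac12}$ gives $\alpha_k = C\sqrt{k+\tfrac12}\,c_k$ for a harmless constant $C$ depending only on the measure $\mu$. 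It therefore suffices to show $c_k=o(k^{-\ell-1/2})$.

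First I would invoke the classical derivative recurrence $(2k+1)p_k=(p_{k+1}-p_{k-1})'$ and integrate by parts. The boundary contributions vanish identically, since $p_n(1)=1$ and $p_n(-1)=(-1)^n$, so that $p_{k+1}(\pm1)-p_{k-1}(\pm1)=0$; hence
\begin{equation*}
c_k = -\frac{1}{2k+1}\int_{-1}^{1} f'(x)\bigl(p_{k+1}(x)-p_{k-1}(x)\bigr)\,dx = -\frac{1}{2k+1}\bigl(c^{(1)}_{k+1}-c^{(1)}_{k-1}\bigr),
\end{equation*}
where $c^{(1)}_j:=\int_{-1}^{1} f'(x)p_j(x)\,dx$. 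Next I would iterate this identity $\ell$ times, which is legitimate because $f\in C^{\ell}$ makes $f',\dots,f^{(\ell)}$ continuous and the boundary terms cancel at every stage. After $\ell$ steps $c_k$ is a sum of exactly $2^{\ell}$ terms, each of the form $\pm\bigl(\prod_{r=1}^{\ell}(2j_r+1)^{-1}\bigr)\,c^{(\ell)}_m$, with every index $j_r$ and $m$ confined to a window of fixed width $2\ell+1$ around $k$ and $c^{(\ell)}_m:=\int_{-1}^{1} f^{(\ell)}(x)p_m(x)\,dx$. As the number of terms is fixed and each product of reciprocals is bounded by $(2k-2\ell+1)^{-\ell}=O(k^{-\ell})$, the whole sum is $O(k^{-\ell})\max_{|m-k|\le\ell}\lvert c^{(\ell)}_m\rvert$.

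Finally I would control the coefficients of $f^{(\ell)}$ via Parseval's identity: since $f^{(\ell)}$ is continuous on $[-1,1]$ it lies in $L^2$, so its orthonormal Fourier--Legendre coefficients $\sqrt{m+\tfrac12}\,c^{(\ell)}_m$ form a square-summable sequence and in particular tend to $0$. Thus $c^{(\ell)}_m=o(m^{-1/2})=o(k^{-1/2})$ uniformly for $|m-k|\le\ell$. Combining the two estimates yields $c_k = O(k^{-\ell})\cdot o(k^{-1/2}) = o(k^{-\ell-1/2})$, and multiplying by $\sqrt{k+\tfrac12}$ gives $\alpha_k=o(k^{-\ell})$, which is precisely $\alpha_k k^{\ell}\to 0$.

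I expect the main obstacle to be the bookkeeping in the iterated integration by parts: one must check that the boundary terms continue to cancel at each of the $\ell$ stages and that the active index set stays inside the fixed window around $k$, so that the accumulated product of the $(2j+1)^{-1}$ factors is genuinely $O(k^{-\ell})$ and not merely summable. Everything else is a routine application of the Legendre derivative recurrence and Parseval's theorem, and the normalization constant relating $dx$ to $d\mu$ is immaterial to the stated limit.
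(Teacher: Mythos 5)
Your proof is correct. Note that the paper itself does not prove this lemma at all: it only cites the textbook of Isaacson and Keller \cite{isaacson2012analysis}, so there is no in-text argument to compare against; your proposal supplies exactly the kind of classical argument that citation points to. The key steps all check out: the least-squares coefficient is indeed the orthogonal projection $\alpha_k=\langle f,P_k\rangle$ by orthonormality; the identity $p_{k+1}'-p_{k-1}'=(2k+1)p_k$ is the correct form of the recurrence (the paper itself misprints it with a missing prime in the proof of its Lemma 3.5); the boundary terms vanish at every stage because $k+1$ and $k-1$ have the same parity, so $p_{k+1}(\pm 1)=p_{k-1}(\pm 1)$; after $\ell$ iterations the $2^{\ell}$ surviving terms have all indices in a window of width $2\ell$ about $k$, so the product of reciprocals is genuinely $O(k^{-\ell})$; and Parseval applied to $f^{(\ell)}\in C^0\subset L^2$ gives the extra $o(k^{-1/2})$ that, after multiplying back by the normalization $\sqrt{k+\tfrac12}$, yields $\alpha_k=o(k^{-\ell})$. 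One could alternatively integrate by parts against the Sturm--Liouville form $\bigl((1-x^2)p_k'\bigr)'=-k(k+1)p_k$, which trades two derivatives for a factor $k^{-2}$ per step, but that variant handles only even smoothness orders cleanly, whereas your one-derivative-at-a-time recurrence works for every $\ell$, which is what the lemma requires.
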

Lemma \ref{lem:p} connects the smoothness  of $f$ and  the decay rate of the generalized Fourier coefficients $\alpha_{k}$. The following Lemma is a direct result of Lemma \ref{lem:p}.
\begin{lem}\label{lem:p1}
	Let $f\in C^{2}([-1,1])$ and $f^M$ be the corresponding least-squares approximation given by \eqref{eq:oned}, then for any $\epsilon>0$, there exists $N>0$ such that $\forall M>N$ we have
	\begin{equation*}
	\|f-f^{M}\|_{\infty}\leq \frac{\epsilon}{\sqrt{M}}.
	\end{equation*}
	Thus, we have $f^{M}\rightarrow f$ pointwise on $[-1,1]$.
\end{lem}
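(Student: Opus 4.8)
The plan is to realize the approximation error as the tail of the Fourier--Legendre series of $f$ and to control that tail directly using the coefficient decay supplied by Lemma~\ref{lem:p}. Since $f\in C^{2}([-1,1])$, Lemma~\ref{lem:p} with $\ell=2$ gives $\alpha_{k}k^{2}\to 0$, i.e. $\alpha_{k}=o(k^{-2})$. The partial sums $f^{M}$ in \eqref{eq:oned} are exactly the $L^{2}([-1,1],\mu)$-orthogonal projections of $f$ onto $\spn\{P_{0},\dots,P_{M}\}$, so the object to estimate is $f-f^{M}=\sum_{k>M}\alpha_{k}P_{k}$, provided this identity is justified.

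The first concrete step is to record the sup-norm of the normalized basis. The standard Legendre polynomials satisfy $|p_{k}(x)|\le p_{k}(1)=1$ on $[-1,1]$, so the normalized ones obey $\|P_{k}\|_{\infty}=\sqrt{k+\tfrac12}$. Combined with $|\alpha_{k}|=o(k^{-2})$, the series $\sum_{k}\alpha_{k}P_{k}$ is dominated in sup-norm by a convergent series of order $k^{-3/2}$, hence it converges absolutely and uniformly to a continuous limit; since it already converges to $f$ in $L^{2}$, this limit must be $f$, which legitimizes writing the error as the tail and guarantees the final uniform (hence pointwise) convergence. The second step is the tail estimate. Fix $\epsilon>0$ and set $\eta=\epsilon/(2\sqrt{2})$; by $\alpha_{k}=o(k^{-2})$ choose $N$ so that $|\alpha_{k}|\le\eta k^{-2}$ for all $k>N$. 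Then for every $M>N$, using $\sqrt{k+\tfrac12}\le\sqrt{2k}$ and comparing the tail sum to an integral,
\[
\|f-f^{M}\|_{\infty}\le\sum_{k>M}|\alpha_{k}|\,\sqrt{k+\tfrac12}\le \eta\sqrt{2}\sum_{k>M}k^{-3/2}\le \eta\sqrt{2}\int_{M}^{\infty}x^{-3/2}\,dx=\frac{2\sqrt{2}\,\eta}{\sqrt{M}}=\frac{\epsilon}{\sqrt{M}},
\]
which is the claimed bound, and pointwise convergence follows at once since $\epsilon/\sqrt{M}\to 0$.

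The only delicate point is matching the coefficient decay against the growth $\|P_{k}\|_{\infty}=O(k^{1/2})$ so as to land on the precise $M^{-1/2}$ rate: taking $\ell=2$ is exactly what makes $o(k^{-2})$ beat $k^{1/2}$ with a summable margin $k^{-3/2}$, and it is here that one must use that $\alpha_{k}k^{2}\to0$ is a genuine limit rather than mere boundedness, since it is precisely this $o$ (not $O$) that upgrades the estimate to $\|f-f^{M}\|_{\infty}\le\epsilon/\sqrt{M}$ for arbitrarily small $\epsilon$. The remaining care is routine: justifying the uniform convergence of the series so that the tail identity is valid, and the elementary integral comparison for the decreasing summand $k^{-3/2}$.
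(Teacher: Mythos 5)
Your proof is correct and follows essentially the same route the paper relies on (and which it only cites from \cite{isaacson2012analysis}, but spells out when proving Lemma~\ref{thm:p2}): combine the coefficient decay from Lemma~\ref{lem:p} with the bound $\|P_k\|_{\infty}=\sqrt{k+\tfrac12}$ and a tail/integral comparison, then identify the uniform limit with $f$ via the $L^2$ limit. The only cosmetic difference is that you phrase the estimate as a direct bound on the tail $\sum_{k>M}\alpha_k P_k$ rather than as a Cauchy-sequence argument, which is the same computation.
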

Here $\|\cdot\|_{\infty}$ denotes the $L^{\infty}$-norm on $C^2[-1,1]$. In the proof of proposition \ref{thm:p3}, we also need $\|(f^{M})'-f'\|_{\infty} \to 0$ as $M\rightarrow +\infty$. To obtain such convergence, one needs higher regularity on $f$. In particular, we have:
\begin{lem}\label{thm:p2}
	Let $f\in C^{4}([-1,1])$ and $f^M$ be the corresponding least-squares approximation given by \eqref{eq:oned}. We have $\|f'-(f^{M})'\|_{\infty}\rightarrow 0$ as  $M\rightarrow +\infty$.
\end{lem}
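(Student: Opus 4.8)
The plan is to show that the differentiated partial sums $(f^{M})' = \sum_{k=0}^{M}\alpha_k P_k'$ form a uniformly convergent sequence, and then to identify the limit with $f'$ by invoking the classical theorem on term-by-term differentiation. The whole argument reduces to controlling two quantities: the decay of the coefficients $\alpha_k$ and the growth of $\|P_k'\|_{\infty}$, after which a Weierstrass $M$-test closes the estimate.

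First, I would read off a decay rate for the coefficients directly from Lemma~\ref{lem:p}. Since $f\in C^{4}([-1,1])$, taking $\ell=4$ gives $\alpha_k k^{4}\to 0$, so in particular $C:=\sup_{k\ge 1}|\alpha_k|\,k^{4}<\infty$. Second, I would bound $\|P_k'\|_{\infty}$. Writing the normalized polynomials as $P_k=\sqrt{k+\tfrac12}\,p_k$ and using the classical estimate $\max_{x\in[-1,1]}|p_k'(x)|=p_k'(1)=\tfrac{k(k+1)}{2}$ for the unnormalized Legendre polynomials, one obtains $\|P_k'\|_{\infty}=\sqrt{k+\tfrac12}\,\tfrac{k(k+1)}{2}\le C'k^{5/2}$ for some constant $C'$ independent of $k$.

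Combining the two bounds, for every $\theta\in[-1,1]$ and $k\ge 1$ we have $|\alpha_k P_k'(\theta)|\le C C' k^{-3/2}$, and since $\sum_k k^{-3/2}<\infty$ the $M$-test shows that $\sum_{k=0}^{\infty}\alpha_k P_k'$ converges uniformly on $[-1,1]$ to a continuous function $g$; moreover the tail bound $\|(f^{M})'-g\|_{\infty}\le \sum_{k>M}|\alpha_k|\,\|P_k'\|_{\infty}\to 0$ supplies the rate. Finally, I would identify $g$ with $f'$: Lemma~\ref{lem:p1} already gives $f^{M}\to f$ pointwise, and together with the uniform convergence $(f^{M})'\to g$, the standard theorem on differentiating a uniformly convergent sequence of differentiable functions forces $f$ to be differentiable with $f'=g$. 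Hence $\|f'-(f^{M})'\|_{\infty}\to 0$, as claimed.

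The step I expect to be the crux is the sharp growth bound $\|P_k'\|_{\infty}=O(k^{5/2})$. It is precisely calibrated to the hypothesis: the $C^{4}$ smoothness yields coefficient decay $o(k^{-4})$, which against $O(k^{5/2})$ produces the summable majorant $O(k^{-3/2})$. A cruder derivative estimate, or merely assuming $f\in C^{3}$, would fail to close the $M$-test, which is exactly why the required regularity is raised to $C^{4}$ here, compared with the $C^{2}$ of Lemma~\ref{lem:p1}.
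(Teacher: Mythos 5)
Your proof is correct and follows essentially the same route as the paper's: both pair the $o(k^{-4})$ coefficient decay from Lemma~\ref{lem:p} with the $O(k^{5/2})$ growth of $\|P_k'\|_{\infty}$ to get a summable majorant, establish uniform convergence of $(f^{M})'$ (your Weierstrass $M$-test is the same mechanism as the paper's Cauchy-sequence argument), and then identify the limit with $f'$ (the term-by-term differentiation theorem you invoke is proved by exactly the integration/fundamental-theorem step the paper writes out). The only cosmetic difference is that you quote the classical endpoint value $\|p_k'\|_{\infty}=p_k'(1)=\tfrac{k(k+1)}{2}$, whereas the paper derives the same quadratic bound from the recursion $p'_{n+1}-p'_{n-1}=(2n+1)p_n$.
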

\begin{proof}
	Following the same idea in the proof of Lemma \ref{lem:p1} \cite{isaacson2012analysis}, we are going to show that  $\{(f^{M})'\}$ is a Cauchy sequence under $L^{\infty}$-norm. Notice for $m>n$,
	\begin{equation*}
	\|(f^{m}-f^{n})'\| _{\infty} =\left \|\sum_{k=n+1}^{m} \alpha_{k}P'_{k}\right\|_{\infty}
	\leq \sum_{k=n+1}^{m} |\alpha_{k}|\|P'_{k}\|_{\infty},
	\end{equation*}
	which suggests that we need to find a $L^{\infty}$ bound for $P'_{k}$. Using the recursion relation $p'_{n+1}-p_{n-1}=(2n+1)p_{n}$ and $\|p_{n}\|_{\infty}=1$, we have 
	\begin{equation}\label{eq:pbound}
	\|P'_{n}\|_{\infty}=\sqrt{n+\frac{1}{2}}\|p'_{n}\|_{\infty}\leq \frac{1}{2}n(n+1)\sqrt{n+\frac{1}{2}}.
	\end{equation}
	As a result,
	\begin{equation*}
	\|(f^{m}-f^{n})'\|_{\infty}\leq \frac{1}{2}\sum_{k=n+1}^{m}|\alpha_{k}| k(k+1)\sqrt{k+\frac{1}{2}}.
	\end{equation*}
	From Lemma \ref{lem:p} we know that $f\in C^{4}([-1,1])$ is enough to ensure $\{(f^{M})'\}$ is Cauchy. Let $(f^{M})'\rightarrow \varphi' \in C^{0}([-1,1])$ in $L^{\infty}$ sense. The remaining issue is whether $\varphi'=f'$. This is straightforward, since by Lemma \ref{lem:p1} we know $f^{M}\rightarrow f$ pointwisely. Moreover,
	\begin{equation*}
	\begin{split}
	f^{M}(\theta)-f^{M}(-1) &=\int^{\theta}_{-1}(f^{M})'(t)\td t, \\
	\lim_{M\rightarrow +\infty} \int^{\theta}_{-1}(f^{M})'(t)\td t &= \int^{\theta}_{-1} \varphi'(t)\td t, \quad  \forall \theta\in[-1,1],
	\end{split}
	\end{equation*}
	and the convergence is uniform with respect to $x$. Thus, $\forall x\in [-1,1],$
	\begin{equation*}
	\lim_{M \rightarrow +\infty}f^{M}(\theta)-f^{M}(-1)=f(\theta)-f(-1) =\int_{-1}^{\theta}\varphi'(t)\td t,
	\end{equation*}
	that is, $f'=\varphi'.$ 
\end{proof}
So far, we have shown that with enough regularity of $f$, we have the uniform convergence of both $f^{M}\rightarrow f$ and $(f^{M})'\rightarrow f'$ on $[-1,1]$. To show the existence of minimizer near $\theta^{\dagger}$, the following Lemma is crucial.

\begin{lem}\label{lem:p4}
	Let $F: D\subset \mathbb{R}^{n}\rightarrow \mathbb{R}$ be continuously differentiable in the domain $D$ and suppose that there is an open ball $B(x^{0},r)\subset D$ and a positive $\gamma$ such that $\|\nabla F(x)^{-1}\|\leq \gamma$ $\forall x\in B(x^{0},r)$ and $r> \gamma \|F(x^{0})\|$. Then $F(x)=0$ has a solution in $B(x^{0},r)$.
\end{lem}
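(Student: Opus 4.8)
The plan is to establish the existence of a zero by a continuation (Davidenko-type) argument. First I note an interpretive point: for $\nabla F(x)^{-1}$ and $\gamma\|F(x^0)\|$ to carry the meaning they are given in Theorem~\ref{thm:convN}, the map must be read as vector-valued, $F:D\subset\mathbb{R}^n\to\mathbb{R}^n$, with $\nabla F(x)$ its $n\times n$ Jacobian and $\|F(x^0)\|$ the Euclidean norm. In particular the hypothesis $\|\nabla F(x)^{-1}\|\le\gamma$ already forces $\nabla F(x)$ to be invertible at every $x\in B(x^0,r)$.

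The central idea is to deform $F(x^0)$ continuously down to $0$ while tracking a preimage along the way. Consider the initial value problem
\begin{equation*}
\dot x(t) = -\,\nabla F(x(t))^{-1} F(x^0), \qquad x(0)=x^0,
\end{equation*}
on $t\in[0,1]$. If a solution that remains in $B(x^0,r)$ exists, then differentiating the composition gives $\frac{\td}{\td t}F(x(t)) = \nabla F(x(t))\,\dot x(t) = -F(x^0)$, hence $F(x(t)) = (1-t)F(x^0)$ and in particular $F(x(1))=0$, so that $x(1)$ is the sought zero inside the ball.

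The key steps are as follows. First, the right-hand side $g(x):=-\nabla F(x)^{-1}F(x^0)$ is continuous on $B(x^0,r)$, since $F\in C^1$ makes $x\mapsto\nabla F(x)$ continuous and matrix inversion is continuous wherever it is defined; Peano's theorem then yields a local solution. Next, I derive the a priori velocity bound $\|\dot x(t)\|\le\|\nabla F(x(t))^{-1}\|\,\|F(x^0)\|\le\gamma\|F(x^0)\|$, valid as long as $x(t)\in B(x^0,r)$, so that $\|x(t)-x^0\|\le\int_0^t\|\dot x(s)\|\,\td s\le t\,\gamma\|F(x^0)\|$. Finally, using $t\,\gamma\|F(x^0)\|\le\gamma\|F(x^0)\|<r$ for $t\le 1$, where the strict inequality is exactly the hypothesis $r>\gamma\|F(x^0)\|$, the trajectory is confined to the closed ball of radius $\gamma\|F(x^0)\|$, a compact subset of the open ball $B(x^0,r)$, and one reads off $F(x(1))=0$.

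The main obstacle is ruling out finite-time escape from $B(x^0,r)$ (equivalently, blow-up of $\nabla F^{-1}$), because the velocity bound, and hence the whole construction, is only legitimate while the trajectory stays inside the ball; this is a bootstrapping issue. I would settle it by setting $T^{*} = \sup\{T\le 1 : x \text{ exists on } [0,T] \text{ with } x([0,T])\subset B(x^0,r)\}$ and showing $T^{*}=1$: on $[0,T^{*})$ the estimate above keeps $x(t)$ strictly inside the ball, so the limit $x(T^{*})$ exists and lies in the interior, whence Peano extends the solution past $T^{*}$ unless $T^{*}=1$, contradicting maximality. It is worth emphasizing that only continuity of $\nabla F$ is used, never a Lipschitz condition, which is precisely why this continuation argument is preferable here to a Newton-iteration or contraction-mapping proof under the stated $C^1$ regularity.
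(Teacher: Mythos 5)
Your proof is correct. Note first that the paper itself does not prove this lemma at all: it is quoted from the literature, with the proof deferred to Ortega and Rheinboldt's monograph \cite{ortega1970iterative}, so there is no in-paper argument to compare against line by line. Your reading of the statement is the right one --- the lemma only makes sense for $F:D\subset\mathbb{R}^n\to\mathbb{R}^n$ (the ``$\to\mathbb{R}$'' in the paper is a typo), since otherwise $\nabla F(x)^{-1}$ is undefined for $n>1$. Your continuation (Davidenko) argument is a complete and essentially classical proof of this result, and it is in the same spirit as the treatment in the cited reference: the identity $F(x(t))=(1-t)F(x^0)$ along the path, the a priori bound $\|\dot x\|\le\gamma\|F(x^0)\|$ confining the trajectory to the compact ball of radius $\gamma\|F(x^0)\|<r$, and the extension-past-$T^*$ argument are exactly the three ingredients needed. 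Two points deserve emphasis, both of which you handle correctly: (i) since $\nabla F$ is merely continuous, the right-hand side $-\nabla F(x)^{-1}F(x^0)$ need not be Lipschitz, so Peano (not Picard) existence is required, and uniqueness is never used --- any solution path works; (ii) the bootstrap must rule out escape from $B(x^0,r)$ before the velocity bound can be invoked globally, which your first-exit-time/maximality argument does. A minor presentational remark: rather than taking a supremum over intervals on which \emph{some} solution exists (solutions for different $T$ need not agree absent uniqueness), it is cleaner to fix one maximal solution and argue that its interval of existence must reach $t=1$, since the trajectory is trapped in a compact subset of the domain and the bounded velocity lets you extend continuously to the right endpoint; this is a standard repair and does not affect the validity of your argument.
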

The proof can be found in \cite{ortega1970iterative}. In the one-dimensional case, Lemma \ref{lem:p4} basically says that if a $C^{1}$ function $f$ changes with a minimum speed ($|f'|>\gamma^{-1}$) on the interval $(x^0-r,x^0+r)$ and the function value at $x^0$ is small enough ($f(x^0)\leq \frac{r}{\gamma}$), then the function must reach zero in this interval. With Lemma \ref{lem:p1}-\ref{lem:p4}, we are able to prove the following result which is the one-dimensional analogy of theorem \ref{thm:convN}.
\begin{prop}\label{thm:p3}
		Consider the following one-dimensional nonlinear least-squares problem
		\begin{equation*}
		\min_{\theta\in [-1,1]} f^{2}(\theta),
		\end{equation*}
		with solution $\theta^{\dagger}$. If $f$ is approximated by $f^{M}$ given by \eqref{eq:oned} and we assume that
		\begin{enumerate}
			\item $f\in C^{4}([-1,1])$,
			\item $f(\theta^{\dagger})=0$,
			\item $|f'(\theta^{\dagger})|\not=0$,  
		\end{enumerate}
		then there exists a sequence of minimizer $\{\theta^{*}_{M}\}$ such that
		\begin{equation*}
		(f^{M})^{2}(\theta^{*}_{M})= \min_{\theta\in [-1,1]} (f^{M})^{2}(\theta)=: e^2_{M}
		\end{equation*}
		and 
		\begin{equation*}
		\lim_{M\rightarrow +\infty}\theta^{*}_{M}= \theta^{\dagger}, \quad \lim_{M \rightarrow +\infty}|f^{M}(\theta^{*}_{M})|=0.
		\end{equation*} 
\end{prop}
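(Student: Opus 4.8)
The plan is to show that, for $M$ large, the surrogate polynomial $f^{M}$ inherits from $f$ both a derivative that stays bounded away from zero near $\theta^{\dagger}$ and a value at $\theta^{\dagger}$ that is small; the Newton--Kantorovich-type criterion of Lemma~\ref{lem:p4} then forces $f^{M}$ to possess a genuine zero in a shrinking neighborhood of $\theta^{\dagger}$, and that zero is the minimizer we seek. Throughout I take $\theta^{\dagger}\in(-1,1)$ so that small balls about it remain in the domain, consistent with the standing interior assumption.

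First I would exploit assumptions (1) and (3). Since $f'$ is continuous and $m_{0}:=|f'(\theta^{\dagger})|>0$, there is a radius $r_{0}>0$ with $I_{0}:=[\theta^{\dagger}-r_{0},\theta^{\dagger}+r_{0}]\subset(-1,1)$ on which $|f'(\theta)|\ge m_{0}/2$. By Lemma~\ref{thm:p2} the derivatives converge uniformly, $\|(f^{M})'-f'\|_{\infty}\to 0$, so for all $M$ beyond some $M_{0}$ one has $|(f^{M})'(\theta)|\ge m_{0}/4$ on $I_{0}$; writing $\gamma:=4/m_{0}$ this yields the $M$-independent bound $1/|(f^{M})'(\theta)|\le\gamma$ on every ball centered at $\theta^{\dagger}$ of radius at most $r_{0}$, which is exactly the hypothesis $\|\nabla F(x)^{-1}\|\le\gamma$ of Lemma~\ref{lem:p4} in one dimension. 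Next, using assumption (2) together with the uniform convergence of Lemma~\ref{lem:p1}, I note that $|f^{M}(\theta^{\dagger})|=|f^{M}(\theta^{\dagger})-f(\theta^{\dagger})|\to 0$. I would then choose a radius $r_{M}$ satisfying $\gamma|f^{M}(\theta^{\dagger})|<r_{M}\le r_{0}$ with $r_{M}\to 0$, for instance $r_{M}:=\min\{r_{0},\ \gamma|f^{M}(\theta^{\dagger})|+1/M\}$, which is legitimate precisely because $\gamma|f^{M}(\theta^{\dagger})|\to 0$. Applying Lemma~\ref{lem:p4} with $F=f^{M}$, $x^{0}=\theta^{\dagger}$, and radius $r_{M}$ produces a point $\theta^{*}_{M}\in B(\theta^{\dagger},r_{M})$ with $f^{M}(\theta^{*}_{M})=0$.

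Finally I would assemble the three conclusions. Because $(f^{M})^{2}\ge 0$ everywhere and vanishes at $\theta^{*}_{M}$, the minimal value $e^{2}_{M}=\min_{[-1,1]}(f^{M})^{2}$ equals $0$ and $\theta^{*}_{M}$ is a global minimizer of the surrogate objective; hence $\{\theta^{*}_{M}\}$ is the asserted sequence of minimizers, and $|f^{M}(\theta^{*}_{M})|=0$ gives the residual statement trivially. Since $\theta^{*}_{M}\in B(\theta^{\dagger},r_{M})$ with $r_{M}\to 0$, we obtain $\theta^{*}_{M}\to\theta^{\dagger}$. I expect the main technical obstacle to lie in the coordination carried out in the second paragraph: the condition $r>\gamma\|F(x^{0})\|$ of Lemma~\ref{lem:p4} must be maintained while simultaneously forcing the ball to shrink, so that convergence of $\theta^{*}_{M}$ is guaranteed. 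This balance rests on the $C^{4}$ regularity, which through Lemma~\ref{thm:p2} is exactly what upgrades the mere pointwise control of $f^{M}$ into the \emph{uniform} derivative lower bound on $I_{0}$; it is this uniformity that keeps $\gamma$ finite and independent of $M$ and thereby makes the shrinking-radius argument close.
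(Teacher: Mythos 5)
Your proof is correct, and it uses the same two ingredients as the paper (the uniform convergence of $f^M$ and $(f^M)'$ from Lemmas \ref{lem:p1} and \ref{thm:p2}, plus the root-existence Lemma \ref{lem:p4}), but it deploys them in a genuinely different way. The paper applies Lemma \ref{lem:p4} to the \emph{shifted} function $F^{M}:=f^{M}-e_{M}$ on a ball of \emph{fixed} radius $r$: a zero of $F^{M}$ is by construction a point where $(f^{M})^{2}$ attains its minimum $e_{M}^{2}$, so the argument never needs to decide whether that minimum is zero; convergence $\theta^{*}_{M}\to\theta^{\dagger}$ is then proved separately by applying the mean value theorem to $f$, giving the quantitative bound $|\theta^{\dagger}-\theta^{*}_{M}|\leq \gamma\epsilon/\sqrt{M}$, and the residual is only shown to satisfy $|f^{M}(\theta^{*}_{M})|\leq |f^{M}(\theta^{\dagger})|\to 0$. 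You instead apply Lemma \ref{lem:p4} to $f^{M}$ itself on a \emph{shrinking} ball $B(\theta^{\dagger},r_{M})$ with $r_{M}\to 0$, which is legitimate because the hypothesis $r_{M}>\gamma|f^{M}(\theta^{\dagger})|$ can be maintained while $r_{M}\to 0$. This buys you a strictly stronger conclusion: for all large $M$ the surrogate polynomial has an \emph{exact} zero near $\theta^{\dagger}$, hence $e_{M}=0$, the zero is automatically a global minimizer of $(f^{M})^{2}$ on $[-1,1]$, the residual statement is trivial, and convergence of $\theta^{*}_{M}$ comes for free from $r_{M}\to 0$ with essentially the same rate. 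The paper's shifted-function device is more conservative but is exactly the form it reuses in the $N$-dimensional proof of Theorem \ref{thm:convN}; your unshifted, shrinking-ball variant would also generalize there (the key constraint in both cases being $K=N$, so that the Jacobian inverse bound required by Lemma \ref{lem:p4} makes sense), at the cost of having to note that a common zero of all $f^{M}_{i}$ forces the vector $\textbf{e}^{M}$ to vanish.
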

\begin{proof}
	From Lemma \ref{lem:p1} and \ref{thm:p2}, we know that the first assumption provides
\begin{equation*}
\lim_{n \rightarrow +\infty} \|f-f^{M}\|_{\infty}=0, \quad \lim_{n \rightarrow +\infty} \|f'-(f^{M})'\|_{\infty}=0.
\end{equation*}
Let $F^{M}(\theta):=f^{M}(\theta)-e_{M}$, and we are going to apply Lemma \ref{lem:p4} to $F^{M}$ at $\theta^{\dagger}$. Since $f'(\theta^{\dagger})\not =0$, by continuity we know that there exist positive constants $\gamma$ and $r$ such that
\begin{equation}\label{eq:psugo}
|f'(\theta)|^{-1}\leq \frac{\gamma}{2}, \quad \forall \theta \in (\theta^{\dagger}-r,\theta^{\dagger}+r)\subset (-1,1).
\end{equation}
Further notice that $(F^{M})'=(f^{M})'$ and $\|f'-(f^{M})'\|_{\infty}\rightarrow 0$. This implies that there exists a positive constant $L_{1}$ such that $\forall M>L_1$
\begin{equation}\label{eq:cond1}
|(F^{M})'(\theta)^{-1}| \leq \gamma, \quad \forall \theta \in (\theta^{\dagger}-r, \theta^{\dagger}+r)\subset (-1,1).
\end{equation}
At $\theta=\theta^{\dagger}$, we have 
\begin{equation*}
|F^{M}(\theta^{\dagger})|  =|f^{M}(\theta^{\dagger})-e_{M}|\leq  |f^{M}(\theta^{\dagger})|+e_{M} \leq 2|f^{M}(\theta^{\dagger})|,
\end{equation*}
where we used the fact that $e_{M} \leq |f^{M}(\theta^{\dagger})| $, since $e^2_{M}$ is the minimum of $(f^{M})^2$ on $[-1,1]$. As a result,
\begin{equation*}
\lim_{M\rightarrow +\infty}|F^{M}(\theta^{\dagger})| \leq \lim_{M\rightarrow +\infty} 2|f^{M}(\theta^{\dagger})|=2 |f(\theta^{\dagger})|=0.
\end{equation*}
Thus, we are able to select $L_2>0$ such that $\forall M>L_2$
\begin{equation}\label{eq:cond2}
|F^{M}(\theta^{\dagger})|< \frac{r}{\gamma}.
\end{equation}
Since we have both \eqref{eq:cond1} and \eqref{eq:cond2} $\forall M> \max\{L_1,L_2\}=:L$, applying Lemma \ref{lem:p4} to $F^{M}$ at $\theta^{\dagger}$, we conclude that $F^{M}(\theta)=0$ has a solution in $(\theta^{\dagger}-r,\theta^{\dagger}+r)$. Denote such solution as $\theta^{*}_{M}$, which is a minimizer of the approximated least-squares problem, that is,
\begin{equation*}
(f^{M})^{2}(\theta^{*}_{M}) =e^2_{M}= \min_{\theta\in [-1,1]} (f^{M})^{2} (\theta), \quad \forall M>L.
\end{equation*}
Apply the mean value theorem to $f(\theta^{*}_{M})-f(\theta^{\dagger})$ for $M>L$ we have
\begin{equation*}
|\theta^{\dagger}-\theta^{*}_{M}| \leq |f'(\xi_{M})|^{-1} |f(\theta^{\dagger})-f(\theta^{*}_{M}))|=|f'(\xi_{M})|^{-1} |f(\theta^{*}_{M})|,
\end{equation*}
for some $\xi_{M}\in (\theta^{\dagger}-r, \theta^{\dagger}+r)$. Thus, the stability condition in \eqref{eq:psugo} leads to
\begin{equation*}
\begin{split}
 |\theta^{\dagger}-\theta^{*}_{M}| &   \leq \frac{\gamma}{2} |f(\theta^{*}_{M})| \leq \frac{\gamma}{2} \left( |f(\theta^{*}_{M})-f^{M}(\theta^{*}_M)|+|f^{M}(\theta^{*}_{M})|\right) \\
& \leq  \frac{\gamma}{2} \left(\|f-f^{M}\|_{\infty}+|f^{M}(\theta^{\dagger})|\right) \leq \frac{\gamma \epsilon}{\sqrt{M}}  \rightarrow 0.
\end{split}
\end{equation*}
as $M\to\infty$ and for any $\epsilon>0$. Here, we have used Lemma~\ref{lem:p1} to bound the first term and the residual error, 
\begin{equation*}
|f^{M}(\theta^{*}_{M})|\leq |f^{M}(\theta^{\dagger})|=|f^{M}(\theta^{\dagger}) - f(\theta^{\dagger})|\leq\frac{\epsilon}{\sqrt{M}}.
\end{equation*}
\end{proof}
This result guarantees the existence of minimizers of the approximate polynomial least-squares problem that converges to the solution of the true least-squares problem under a fairly strong condition, that is, $f$ is smooth enough. In practice (see Section~\ref{sec4}), we shall see reasonably accurate estimates even when the essential statistics are not as regular as required by this theoretical estimate.

\subsection{Convergence of the Algorithm}

In algorithm \ref{alg:surrogate}, we are solving a least-squares problem (\ref{poly_ls}). For the Gauss-Newton method, a standard local convergence result under a full rank assumption of the Jacobian matrix is well known (e.g., see Chapter 2 of \cite{Kelley} for details). To achieve such a local convergence in our numerical algorithm, we will now verify the full rank condition in the polynomial chaos setting.

\comment{The analysis in \cite{Haussler:86} provides a Kantorovich-type of convergence result for the Gauss-Newton method without the full rank requirement, and similarly in \cite{Smale:86,Dedieu:00}, a Smale-type of convergence analysis for the Gauss-Newton method was carried out. We are going to consider the result in \cite{Kelley}, and verify the full rank condition in the polynomial chaos setting.

\begin{thm}\label{local_convergence}
	Let $\theta^{*}$ be the solution of the least-squares problem \eqref{poly_ls}. If the Jacobian matrix $J^{M}(\theta^{*})$ has full column rank, then there exist $K>0$ and $\delta>0$ such that if $\theta^{0}\in \mathcal{O}(\theta^{*},\delta)$ then the error in the Gauss-Newton iteration satisfies
	\begin{equation}\label{eq:convergence}
	\|e^{1}\|\leq K(\|e^{0}\|^{2}+\|\textbf{f}^{M}(\theta^{*})\|\|e^{0}\|),
	\end{equation}
	where the error $e^{i}:=\theta^{*}-\theta^{i}$ for $i=0,1$ and $\theta^{1}$ is defined by \eqref{G-N}. Here constant $K$ only depends on $\delta$, $\theta^{*}$, and $\textbf{f}^{M}$.
	\end{thm}
	
	Notice that for zero-residual problem, that is, ${\textbf{f}}^{M}(\theta^{*})=0$, the $\|\textbf{f}^{M}(\theta^{*})\|\|e^{0}\|$ term on the right-hand side of \eqref{eq:convergence} vanishes, which leads to
	\begin{equation*}
	\frac{\|e^{1}\|}{\|e^{0}\|}\leq K\|e^{0}\|< K\delta.
	\end{equation*}
	Hence, in our case, to achieve a fast local convergence of Gauss-Newton method we need the residual $\|\textbf{f}^{M}(\theta^{*})\|$ to be small.
}

For our problem, the Jacobian matrix $J^{M}(\theta)\in \bb{R}^{K\times N}$ is given by
\begin{equation}
(J^{M})_{ij}=\frac{\partial f^{M}_{i}}{\partial \theta_{j}}, \quad i=1,\dots,K, \quad  j=1,\dots N, \nonumber
\end{equation}
where $f^{M}_{i}$ is an element of the finite-dimensional polynomial space $\Gamma^{M}_{N}$ defined by
\begin{equation*}
\Gamma^{M}_{N}:=\spn\{\theta_{1}^{k_{1}}\theta_{2}^{k_{2}}\cdots \theta_{N}^{k_{N}}\;|\; k_{i}=0,1,\dots,M\}.
\end{equation*}
Using linear independence over the function space $\Gamma^{M}_{N}$ and some fundamental results in algebra, we can prove the following theorem.
\begin{thm}\label{full_rank}
If $F:=\{f_{i}\}_{i=1}^{K}\subset \Gamma^{M}_{N}$ is linearly independent and $K> \max\{N,(M+1)^{N-1}\}$, then the column space of the corresponding Jacobian matrix $J(\theta)$ is full rank in $(\Gamma^{M}_{N})^{K}$. Furthermore, the set $\mathcal{N}(J)$ defined by
\begin{equation*}
\mathcal{N}(J):=\{\theta\in\bb{R}^{N}\;|\; \rank(J(\theta))\leq N-1\},
\end{equation*}
is nowhere dense over $\bb{R}^{N}$.
\end{thm}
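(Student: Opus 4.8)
The plan is to treat the two assertions separately, handling the nowhere-dense claim by a reduction to a single nonvanishing polynomial and handling the full-rank claim by a kernel-dimension count that exploits the tensor-product structure of $\Gamma^M_N$. For the reduction, note that each entry $(J)_{ij}=\partial f_i/\partial\theta_j$ is a polynomial in $\theta$, so every $N\times N$ minor of $J$ is a polynomial and $\mathcal{N}(J)$ is precisely the common zero set of the finitely many maximal minors, i.e. a real algebraic set. Such a set is either all of $\bb{R}^N$ or contained in the zero locus of one nonzero polynomial, and the latter has empty interior, hence is nowhere dense. Thus it suffices to exhibit a single $\theta_0$ with $\rank J(\theta_0)=N$; equivalently, to show that $J$ has full column rank over the field of rational functions.

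For the full-rank engine I would argue by contradiction against a fixed direction of degeneracy. Suppose a nontrivial constant combination of columns vanishes identically, $\sum_{j=1}^N c_j\,\partial_j f_i\equiv 0$ for all $i$ with $c=(c_1,\dots,c_N)\neq 0$. Then the directional derivative $\nabla_c f_i=\sum_j c_j\partial_j f_i$ vanishes for every $i$, so all $f_i$ lie in $W_c:=\ker(\nabla_c\mid_{\Gamma^M_N})$. Writing $\Gamma^M_N$ as the $N$-fold tensor product of the univariate degree-$\le M$ space, $\nabla_c$ is a sum of commuting nilpotent operators, each single-variable derivative being a single Jordan block of size $M+1$. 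After rescaling the variables carrying the nonzero components of $c$, the operator reduces to a total-derivative block on that sub-tensor factored against the identity on the rest; a Jordan-block count organized by total degree then gives $\dim W_c=(M+1)^{N-1}$ for every $c\neq 0$. Since $F$ is linearly independent with $K>(M+1)^{N-1}=\dim W_c$, the $f_i$ cannot all lie in $W_c$, a contradiction. Hence the $N$ columns are $\bb{R}$-linearly independent in $(\Gamma^M_N)^K$, which is the first assertion.

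The hard part will be bridging the two claims, and I expect this to be the main obstacle. The kernel bound only rules out a single \emph{fixed} degeneracy direction — it gives independence of the columns over the constants $\bb{R}$ — whereas the nowhere-dense conclusion requires full column rank over $\bb{R}(\theta)$, i.e. a minor that is not the zero polynomial. These genuinely differ: a gradient map can be functionally dependent with $\det J\equiv 0$ while its columns remain $\bb{R}$-independent (take $f_1=\theta_1^2+\theta_2$, $f_2=f_1^2$), so the constant-direction argument alone does not close the nowhere-dense claim. To overcome this I would upgrade the counting argument to the functional setting, using that $\rank_{\bb{R}(\theta)}J=N$ is equivalent to some $N$-subset of $\{f_i\}$ being functionally (algebraically) independent, i.e. to $\operatorname{trdeg}_{\bb{R}}\bb{R}(f_1,\dots,f_K)=N$. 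The clean statement to prove is that any subspace of $\Gamma^M_N$ whose generated field has transcendence degree at most $N-1$ has $\bb{R}$-dimension at most $(M+1)^{N-1}$; combined with linear independence and $K>(M+1)^{N-1}$, this forces transcendence degree $N$, hence a nonvanishing minor and the nowhere-dense conclusion.

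This transcendence-degree bound is the genuinely delicate step, and the coordinate family $\{1,\theta_1,\dots,\theta_1^M\}$ shows $(M+1)^{N-1}$ is sharp. I would attempt it by induction on $N$, peeling off the last variable by writing $f=\sum_{k=0}^M f_{(k)}(\theta_1,\dots,\theta_{N-1})\,\theta_N^k$ and combining the generic-fiber/constant-rank structure of the functionally dependent map with the same tensor-product grading used in the kernel computation, so that the inductive hypothesis controls each graded slice while the grading contributes the remaining factor of $M+1$.
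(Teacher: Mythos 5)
Where your proposal is complete, it is the paper's own argument. The paper's key lemma (Appendix B) is exactly your kernel count: for every constant $c\neq 0$, the kernel of $\nabla_c=\sum_i c_iD_{\theta_i}$ on $\Gamma^M_N$ has dimension at most $(M+1)^{N-1}$, so the linear independence of $F$ and $K>(M+1)^{N-1}$ prevent $F\subset\ker(\nabla_c)$, and the columns $D_{\theta_1}\mathbf{f},\dots,D_{\theta_N}\mathbf{f}$ are linearly independent over $\mathbb{R}$ in $(\Gamma^M_N)^K$; that is the first assertion. (One correction: your claim that $\dim W_c=(M+1)^{N-1}$ exactly for every $c\neq 0$ fails for $N\geq 3$; for $N=3$, $M=1$, $c=(1,1,1)$ the kernel is $\spn\{1,\theta_1-\theta_2,\theta_1-\theta_3\}$, of dimension $3<4$. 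Only the upper bound is used, so this is harmless.) Your more important observation is also correct: constant-coefficient independence of the columns does not imply that some $N\times N$ minor is a nonzero polynomial, as your example $f_1=\theta_1^2+\theta_2$, $f_2=f_1^2$ shows. You should know that the paper's proof makes precisely this leap: having concluded ``full rank in $(\Gamma^M_N)^K$,'' it asserts without further argument that ``this implies'' $p(\theta)=\det(J^\top(\theta)J(\theta))$ is not the zero polynomial. So you have not merely anticipated the main difficulty; you have put your finger on a genuine gap in the published proof, which as written establishes the first assertion but not the second.

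However, your proposal does not close that gap either, so as a proof it is incomplete at the decisive step. The bridging lemma you formulate --- any subspace of $\Gamma^M_N$ whose elements generate a field of transcendence degree at most $N-1$ has dimension at most $(M+1)^{N-1}$ --- is the right statement, and together with the Jacobian criterion (in characteristic zero the generic rank of $J$ equals $\operatorname{trdeg}_{\mathbb{R}}\mathbb{R}(f_1,\dots,f_K)$) it immediately yields the nowhere-dense claim; but you only conjecture it, and the induction you sketch is vague (it is unclear what transcendence bound the graded slices $f_{(k)}$ inherit). The lemma is in fact true and can be proved so as to complete your route. Fix the lexicographic term order and for a nonzero polynomial $g$ let $v(g)\in\{0,1,2,\dots\}^N$ be the exponent of its leading monomial; $v$ is additive, $v(gh)=v(g)+v(h)$. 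If $\mathcal{A}:=\mathbb{R}[f_1,\dots,f_K]$ contained $g_1,\dots,g_N$ with $v(g_1),\dots,v(g_N)$ linearly independent, then $g_1,\dots,g_N$ would be algebraically independent: in any nonzero polynomial $P$, distinct monomials $g_1^{m_1}\cdots g_N^{m_N}$ have distinct leading exponents $\sum_i m_iv(g_i)$, so the lex-largest leading term of $P(g_1,\dots,g_N)$ cannot cancel and $P(g_1,\dots,g_N)\neq 0$. Hence $\operatorname{trdeg}\leq N-1$ forces all vectors $v(g)$, $g\in\mathcal{A}\setminus\{0\}$, to lie in a hyperplane through the origin. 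Such a hyperplane meets $\{0,1,\dots,M\}^N$ in at most $(M+1)^{N-1}$ lattice points (one coordinate is a linear function of the others), whereas Gaussian elimination with respect to the term order produces a basis of $\spn F$ with $K$ distinct leading exponents, all lying in $\{0,1,\dots,M\}^N$ and all of the form $v(g)$ with $g\in\mathcal{A}$. Thus $K\leq(M+1)^{N-1}$, a contradiction. Inserting this argument makes your proof complete and, unlike the paper's appendix, it actually establishes the second assertion.
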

\begin{proof}
See Appendix~\ref{appB}.
\end{proof}
\begin{rem}
Here $\mathcal{N}(J)$ is the set of $\;\theta$ for which $J(\theta)$ is not full rank in the matrix sense. In our application, $N$ is the dimension of the parameter $\theta$, $K$ is the total number of equations in (\ref{nonlin_eqs}) and $M$ is the order of approximation used in (\ref{expansion}). In practice, since the orthogonal polynomials involved in \eqref{expansion} form a basis for $\Gamma^{M}_{N}$, it is enough to check the rank of the coefficient matrix $A^{M}$ where the $i^{\text{th}}$ column consists of the coefficients $\alpha^{(i)}_{\vec{k}}$ in \eqref{expansion} for all possible $\vec{k}$. The linear independence hypothesis is indeed sensible in practice since we want to avoid solving underdetermined least-squares problems.
\end{rem}

\section{Example I: The Langevin Model} \label{sec4}

For the first example, we consider a classical model in statistical mechanics: the dynamics of a particle driven by a conservative force, a damping force, and a stochastic force. In particular, we choose the conservative force based on the Morse potential
\begin{equation}
U(x)= U_0(a(x-x_0)), \quad U_0(x)= \epsilon(e^{-2x}-2e^{-x} + 0.01 x^2), \label{potential}
\end{equation}
where the last quadratic term in $U_{0}$ acts as a retaining potential (also known as a confining potential), preventing the particle from moving to infinity. For this one-dimensional model, we rescale the mass to unity $m=1$, and write the dynamics as follows
\begin{equation}\label{Lan_sys}
\begin{cases}
\dot{x}=v  \\
\dot{v}=-U'(x)-\gamma v+\sqrt{2\gamma k_{B}T} \dot{W}, \\
\end{cases}
\end{equation}
where $\dot{W}$ is a white noise. The generator of the system \eqref{Lan_sys}, denoted by $\OL$, is given by
\begin{equation}\label{generator}
\OL= v\frac{\partial}{\partial x}+(-U'(x)-\gamma v)\frac{\partial}{\partial v}+\gamma k_{B}T \frac{\partial^{2}}{\partial v^2}. 
\end{equation}
The smooth retaining potential $U(x)$ guarantees the ergodicity of the Langevin system \eqref{Lan_sys} {  (see Appendix~\ref{appC} for details)}. Namely, there is an equilibrium distribution (Gibbs
measure) $p_{eq}(x,v)$, given by
\begin{equation}\label{eqdis} 
p_{eq}\propto \exp\Big[-\frac{1}{k_{B}T}\big(U(x)+\frac{1}{2}v^{2}\big)\Big],
\end{equation}
which is independent of $\gamma$. In particular, we have $v\sim \mathcal{N}(0,k_{B}T)$ at equilibrium. For this Langevin model, there are a total of five unknown parameters $\theta:=(\gamma,T,\epsilon,a,x_0)$ with true values $\theta^{\dagger}:=(\gamma^{\dagger},T^{\dagger},\epsilon^{\dagger},a^{\dagger},x^{\dagger}_0)$. 

{  For this specific problem, notice that all parameters except $\gamma$ appear in the equilibrium density function. Intuitively, this suggests that one can estimate four parameters, $\{T,\epsilon,a,x_0\}$, from equilibrium statistics and $\gamma$ from two-point statistics, respectively. We will present a conventional estimation method based on this guideline for diagnostic purpose. In particular, the parameter $\gamma$ will be directly estimated from a two-point statistic as shown in Section~\ref{secA} below. Subsequently, the remaining four parameters are obtained via one-point statistics, as shown in \eqref{out_tg} below and Section \ref{secB1}. 

As for the proposed approach, we will consider estimating all the parameters using the essential statistics in the most efficient manner. First, two of the parameters, $\{\gamma,T\}$, will be estimated using the essential statistics in \eqref{out_tg}. Then, via a sensitivity analysis discussed in Section \ref{local_sens_analysis}, we will show that parameter $x_0$ is independent of the essential statistics. As a consequence, we formulate a least-square problem to estimate $\theta=(\epsilon, a)$ and use equilibrium statistics to estimate $x_0$.}

\subsection{ Reduction of the Parameter Space} \label{secA}
In our previous work \cite{HLZ:17}, we have suggested an approach to estimate $T^{\dagger}$ and $\gamma^{\dagger}$ directly from the essential statistics. To construct such essential statistics, we consider a constant external forcing $\delta f$ with $\delta\ll 1$. The corresponding perturbed system is given by
\begin{equation}\label{psys}
\begin{cases}
\dot{x}=v  \\
\dot{v}=-U'(x)-\gamma v+\delta f+\sqrt{2\gamma k_BT} \dot{W}, \\
\end{cases}
\end{equation}
that is, $c(x)=(0,1)$ in the FDT formula \eqref{RA}. By selecting the observable $A=(0,v)^\top$, we work with the $(2,2)$ entry of the response operator given by
\begin{equation}\label{esst_lan}
k_{A}(t;\theta^{\dagger})=\frac{1}{k_B T^{\dagger}}\mathbb{E}_{\peq^{\dagger}}[v(t)v(0)]=\frac{1}{k_B T^{\dagger}}\int_{\mathbb{R}^2}\int_{\mathbb{R}^2} v v_0\rho(x,v,t|x_0,v_0,0)p^{\dagger}_{eq}(x_{0},v_{0}) \td x\td v\td x_0 \td v_0,
\end{equation}
where $\rho$ is the solution of corresponding Fokker-Planck equation \eqref{fpe} associated with the Langevin dynamics. Taking the time derivative of $\mathbb{E}_{\peq^{\dagger}}[v(t)v(0)]$ for $t>0$, we obtain
\begin{equation}\label{diff_with_gen}
\begin{split}
\frac{\partial}{\partial t}\mathbb{E}_{\peq^{\dagger}}[v(t)v(0)] &=\int_{\mathbb{R}^2}\int_{\mathbb{R}^2} 
v v_0 \OL^{*}\rho(x,v,t|x_0,v_0,0)p^{\dagger}_{eq}(x_{0},v_{0}) \td x\td v\td x_0 \td v_0 \\
&=\int_{\mathbb{R}^2}\int_{\mathbb{R}^2} 
\OL v\; v_0 \rho(x,v,t|x_0,v_0,0)p^{\dagger}_{eq}(x_{0},v_{0}) \td x\td v\td x_0 \td v_0 \\ 
&=\int_{\mathbb{R}^2}\int_{\mathbb{R}^2} 
(-U'(x)-\gamma v) v_0 \rho(x,v,t|x_0,v_0,0)p^{\dagger}_{eq}(x_{0},v_{0}) \td x\td v\td x_0 \td v_0\\
&=\mathbb{E}_{\peq^{\dagger}}[(-U'(x(t))-\gamma^{\dagger} v(t))v(0)].\\
\end{split} 
\end{equation}
Let $t\rightarrow 0^{+}$ in \eqref{diff_with_gen} and recall that $v\sim\mathcal{N}(0,k_{B}T)$ at equilibrium, we have
\begin{equation} \label{out_tg}
\begin{split}
\mathbb{E}_{p^{\dagger}_{eq}}[v(t)v(0)]\Big|_{t=0} &=k_{B}T^{\dagger}; \\ \frac{\partial}{\partial t}\mathbb{E}_{p^{\dagger}_{eq}} [v(t)v(0)]\Big|_{t=0^+} &= -\gamma^{\dagger} k_{B}T^{\dagger},
\end{split}
\end{equation}
where both the left-hand-sides can be computed from the sample. Thus, \eqref{out_tg} provides direct estimates for $T^{\dagger}$ and $\gamma^{\dagger}$. As a result, the original parameter estimation problem can be reduced into estimating $(\epsilon^{\dagger}, a^{\dagger}, x^{\dagger}_0)$ in the potential function $U$, which is a non-trivial task since the dependence of $p_{eq}$ on these parameters is quite complicated. With these estimates, $B^\dagger$ becomes available since it only depends on $k_BT^\dagger$, which allows one to compute $M_j(t_i)$ in \eqref{nonlin_sys} and avoid the issue pointed out in Remark~\ref{remark}.

\subsection{Parameter Estimation Approaches}

In this subsection, we focus on the three-dimensional parameter estimation problem with $\theta:=(\epsilon,a,x_{0})$. First, we review the conventional approach which matches the equilibrium statistics of $x$. Then, we discuss the proposed new approach using the essential statistics. In particular, we perform a sensitivity analysis to determine the identifiability of the parameters from the essential statistics. Finally, we present the numerical results including a comparison between the conventional and the new approaches. 

\subsubsection{Conventional Method}\label{secB1}
We first look at what we can learn from the equilibrium statistics. Since the marginal distribution of $x$ at equilibrium state is proportional to $\exp(-U(x;\theta)/k_{B}T)$, a natural idea is to match the moments of $x$ with respect to this equilibrium density. In particular, one can introduce the following three equations
\begin{eqnarray}\label{match_eq}
\mathbb{E}_{p^{\dagger}_{eq}}[x^{i}]=\mathbb{E}_{p_{eq}}[x^{i};\epsilon,a,x_0], \quad i=1,2,3,
\end{eqnarray}
where the left-hand-sides are computed from the given data while the right-hand-sides are treated as functions of $(\epsilon, a, x_0)$, obtained from solving the model \eqref{psys}. The following pre-computation simplifies equation \eqref{match_eq} into a one-dimensional problem.

To begin with, we define the probability density functions
\begin{equation}
\begin{split}
p_{eq}^{a, x_0}(x)&:=\frac{1}{N}\exp\big(-\frac{1}{k_B T}U_0(a(x-x_0))\big);\\
p_{eq}^{1,0}(x)&:=\frac{1}{N_{0}}\exp\big(-\frac{1}{k_B T}U_0(x)\big),\nonumber
\end{split}
\end{equation}
in terms of $U$ and $U_0$, respectively. With a change of variables $y:=a(x-x_0)$ the normalizing constants $N$ and $N_0$ satisfy
\begin{equation*}
\begin{split}
N=\int_{\mathbb{R}}\exp\big(-\frac{1}{k_B T}U_0(a(x-x_0))\big)\td x \\ =\frac{1}{a}\int_{\mathbb{R}}\exp(-\frac{1}{k_B T}U_0(y))\td y=\frac{1}{a}N_0.
\end{split}
\end{equation*}
As a result, the first two equations in \eqref{match_eq} can be written into
\begin{eqnarray}
\mathbb{E}_{p_{eq}^{\dagger}}[x] &=& \mathbb{E}_{p_{eq}}^{a,x_0}[x]=\frac{1}{a}\mathbb{E}_{p_{eq}}^{1,0}[x]+x_0,\nonumber\\
\mathbb{E}_{p_{eq}^{\dagger}}[x^2] &=& \mathbb{E}_{p_{eq}}^{a,x_0}[x^2] =\frac{1}{a^2}\mathbb{E}_{p_{eq}}^{1,0}[x^2]+\frac{2 x_0}{a}\mathbb{E}^{1,0}_{p_{eq}}[x]+x_0^2,\nonumber
\end{eqnarray}
with a unique solution
\begin{equation}\label{axo}
a=\sqrt{\frac{\Var^{1,0}_{p_{eq}}[x]}{\Var_{\peq^{\dagger}}[x]}},\quad
x_0=\mathbb{E}_{\peq^{\dagger}}[x]-\mathbb{E}^{1,0}_{p_{eq}}[x]\sqrt{\frac{\Var_{\peq^{\dagger}}[x]}{\Var^{1,0}_{p_{eq}}[x]}},
\end{equation}
where $\Var_{p_{eq}}^{1,0}[x]$ stands for the variance of $x$ with respect to the rescaled equilibrium density $p^{1,0}_{eq}$. In practice, both $\mathbb{E}_{\peq^{\dagger}}[x]$ and $\mbox{Var}_{\peq^{\dagger}}[x]$ can be empirically estimated directly from the available data, while $\Var_{p_{eq}}^{1,0}[x]$ is a function of $\epsilon$ only. Thus, (\ref{axo}) can be denoted as
\begin{equation}
a=a(\epsilon);\quad x_{0}=x_{0}(\epsilon),\nonumber
\end{equation}
and \eqref{match_eq} is reduced to a one-dimension problem for $\epsilon$,
\begin{equation} \label{match_eq1}
\begin{split}
\mathbb{E}_{p^{\dagger}_{eq}}[x^3] &= \mathbb{E}_{p_{eq}}[x^3;\epsilon,a,x_0] {  \left(= \int_{\mathbb{R}} x^3 \peq(x;\epsilon,a,x_0)\td x \right)} \\
\text{ s.t. } a &=a(\epsilon), \; x_0=x_0(\epsilon).
\end{split}
\end{equation}
{ Note that the left-hand-side of \eqref{match_eq1} will be estimated by Monte-Carlo averaging from the unperturbed data, whereas the right-hand-side integral will be approximated using a numerical quadrature.} 

\subsubsection{Sensitivity Analysis of the Essential Statistics}\label{local_sens_analysis}

To ensure a successful parameter estimation, it is useful to gather some a priori knowledge of the parameter identifiability from the proposed essential statistics. This is important especially since there are non-unique choices of essential statistics to be fit and we want to ensure that we can identify the parameters from appropriate essential statistics: Recall that the essential statistics are defined based on the choices of external forcing and observable. 

While local sensitivity analysis such as the pathwise derivative method described in \cite{srk:12} is desirable, it requires knowledge of the true parameters which are not available to us. Essentially, the pathwise derivative method is to compute $\bar{y}_\theta := \mathbb{E} [Y(t,\omega)]$, where $Y(t,\omega):=\mathcal{D}_{\theta}X(t,\theta)$ and the expectation is defined with respect to the density of the It\^o diffusion (written in its integral form),
\begin{equation} 
X(t,\theta)=X_{0}(\theta)+\int^{t}_{0}b(X(s,\theta),\theta) \td s+\int^{t}_{0}\sigma(X(s,\theta),\theta)\td W_{s}.\label{Xt}
\end{equation}
Since we have no explicit solutions of the density $\rho(x,t;\theta)$, one way to approximate $\bar{y}_\theta$ is using an ensemble average of the solutions of the following equation, 
\begin{equation}\label{Yt}
\begin{split}
Y(t,\theta) &=Y_{0}(\theta)+\int^{t}_{0}[b_{X}(X(s,\theta),\theta)Y(s,\theta)+b_{\theta}(X(s,\theta),\theta)]\td s \\ &+\int^{t}_{0}[\sigma_{X}(X(s,\theta),\theta)Y(s,\theta)+\sigma_{\theta}(X(s,\theta),\theta)]\td W_{s}.
\end{split}
\end{equation}
Notice that this equation depends on \eqref{Xt} and the unknown parameters, $\theta$. The dependence on $\theta$ implies that one cannot use this method for a priori sensitivity analysis. However, it can be used for a posteriori sensitivity analysis, evaluated at the estimates, to verify the difficulties of the particular parameter regimes. 

As an empirical method for a priori sensitivity analysis, we simply check the deviation of $\hat{k}(t,\theta)$ as a function of  the training collocation nodes, $\theta\in \Theta$, which are available from Step~2 of Algorithm~\ref{alg:surrogate}. Of course, a more elaborate global sensitivity analysis technique such as the Sobol index \cite{sobol:93} can be performed as well but we will not pursue this here. From this empirical a priori sensitivity analysis, we found that the corresponding essential statistics, $\mathbb{E}_{\peq}[v(t)v(0)]$ is not sensitive to $x_0$. But it is sensitive to $a$, and strongly sensitive to $\epsilon$. Also, the sensitivity of the low damping case ($\gamma=0.5$) is stronger than the high damping case ($\gamma=0$). To verify the validity of this empirical method, we compute the local sensitivity analysis indices $\bar{y}_\theta := \mathbb{E} [Y(t,\omega)]$, where $Y(t,\omega)$ solves \eqref{Yt} with the true parameters (again this is not feasible in practice since the true parameters are not available).

For our Langevin model, we have the corresponding drift and diffusion coefficients
\begin{equation}
b=(v,-U'(x)-\gamma v)^{\top},\quad \sigma=(0,\sqrt{2\gamma k_{B}T})^{\top}. \nonumber
\end{equation}
Suppose we are interested in the dependence on parameter $x_0$. Introducing the notations $x_{x_{0}}:=\frac{\partial}{\partial x_{0}} x(t,x_{0})$ and  $v_{x_{0}}:=\frac{\partial}{\partial x_{0}} v(t,x_{0})$, the joint system of \eqref{Xt} and \eqref{Yt} (written in differential form) is given as follows,
\begin{equation}\label{extend_x0}
\begin{cases}
\dot{x}=v  \\
\dot{v}=-U'(x;x_0)-\gamma v+\sqrt{2\gamma k_{B}T} \dot{W}, \\
\dot{x}_{x_{0}}= v_{x_{0}} \\
\dot{v}_{x_{0}}=-U''(x;x_{0})x_{x_0}-\gamma v_{x_{0}}-\frac{\partial}{\partial x_{0}} U'(x;x_{0}).
\end{cases}
\end{equation}
There is no stochastic term in the last equation, since the noise in the Langevin model is additive with coefficient independent of $x_0$. Notice that $(x_{x_0}, v_{x_0})\equiv(1,0)$ is a fixed point for the last two equations in \eqref{extend_x0} because of
\begin{equation}
\begin{split}
\frac{\partial}{\partial x_{0}} U'(x;x_{0})&=a\frac{\partial}{\partial x_{0}} U'_{0}(a(x-x_0))\\
&=-a^2U''_{0}(a(x-x_0))=-\frac{\partial}{\partial x} U'(x;x_{0}). \nonumber
\end{split}
\end{equation}
As a result, by choosing the initial condition to be $(x(0),v(0))=(x_0,c)$ for an arbitrary constant $c$, one can claim that the solution $v(t)$ of the Langevin model \eqref{Lan_sys} is independent of $x_0$. Thus, under this circumstance $\bb{E}_{p^{\dagger}_{eq}}[v(t)v(0)]$ is independent of $x_0^{\dagger}$, confirming the conclusion from our empirical a priori sensitivity analysis. 

While this parameter insensitivity may seem discouraging, we can use it to our advantage to simplify the problem. That is, we can assign an arbitrary value to $x_0$ in applying the Algorithm \ref{alg:surrogate} and consider estimating only $\theta=(a,\epsilon)$. Once these two parameters are estimated, we can use the formula in \eqref{axo} to estimate $x_{0}$. 

We also use this local sensitivity analysis to verify the validity of the a priori sensitivity analysis with respect to $\epsilon$ and $a$. Since it is difficult to analyze explicit behaviors as above, for a given realization of the Langevin model, we solve the remaining ODEs by the fourth order Runge-Kutta method. Figure \ref{extended_sys} shows the numerical results of $\bar{v}_a$ and $\bar{v}_{\epsilon}$, where the average is estimated over 3000 realizations of the Langevin model. Based on the scales of $\bar{v}_a$ and $\bar{v}_{\epsilon}$ in Figure \ref{extended_sys}, we confirm the validity of the sensitivity of the two-point statistics $\bb{E}_{p^{\dagger}_{eq}}[v(t)v(0)]$ that was found empirically. Namely, the identifiability of $\epsilon$ is stronger than $a$ and the identifiability of the low damping case ($\gamma=0.5$) is stronger than the high damping case ($\gamma=5.0$). 

\begin{figure}[ht]
\centering
\includegraphics[width=.5\textwidth]{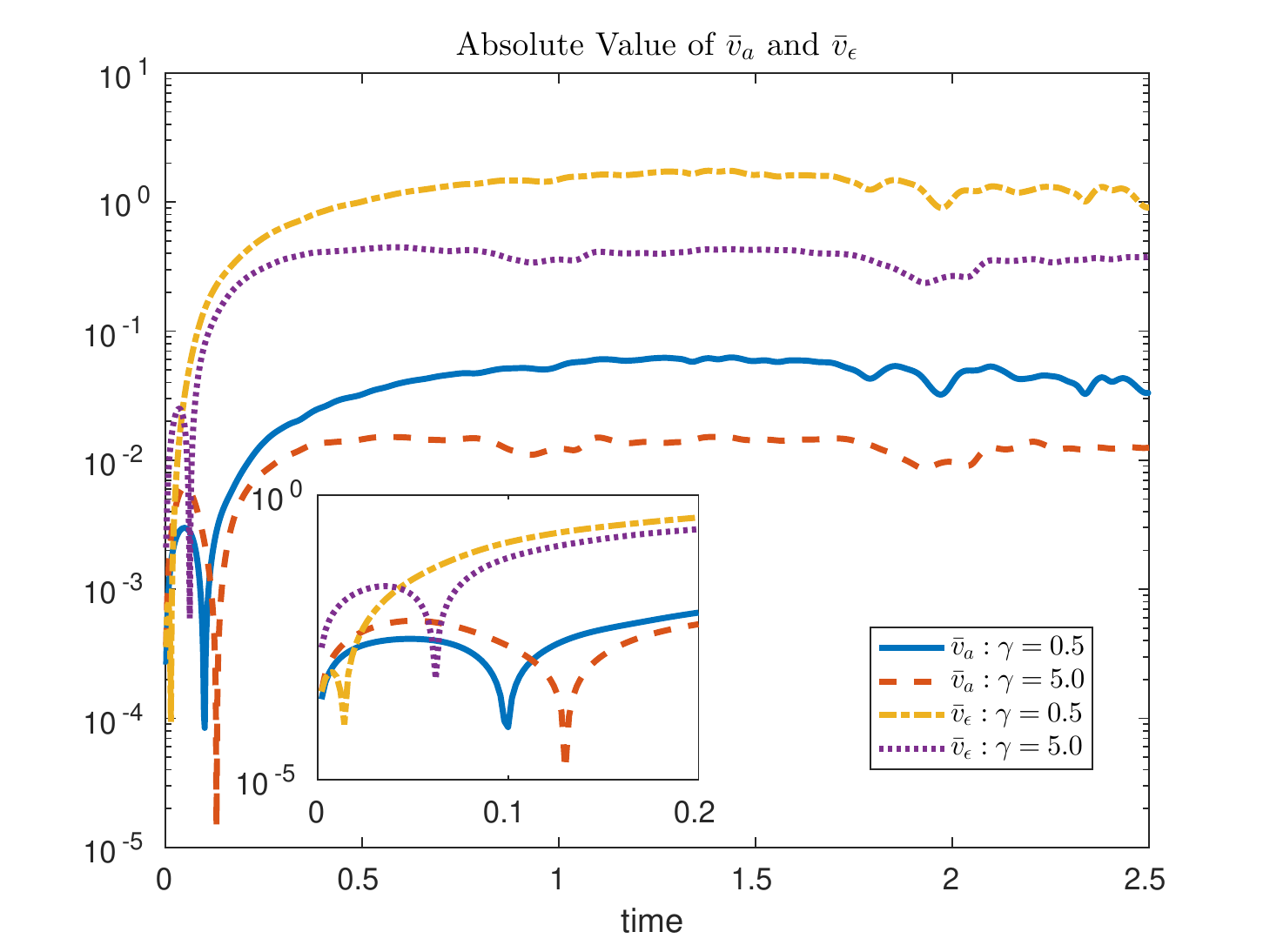}
\caption{The absolute value of $\bar{v}_{a}$ and $\bar{v}_{\epsilon}$ for both $\gamma=0.5$ and $\gamma=5.0$.}
\label{extended_sys}
\end{figure}

\subsection{Numerical Results}
We now present several numerical results. Based on a time series of size $10^7$ (with temporal lag $h=2\times 10^{-3}$), we compare the conventional method and the proposed scheme that fits the essential statistics under $\gamma^{\dagger}=0.5$ and $5.0$, representing a low damping case and a high damping case, respectively. The true values of the remaining four  parameters  are set to $(k_BT^{\dagger},\epsilon^{\dagger},a^{\dagger},x^{\dagger}_0)=(1,0.2,10,0)$. The data, in the form of a time series, are generated from the  model \eqref{psys} using an operator-splitting method \cite{Adam:17}, which is a strong second-order method. 

\subsubsection{Low Damping Case}
We start with the conventional method. To examine the sensitivity of the estimation for $(\epsilon^{\dagger},a^{\dagger}, x^{\dagger}_0)$ to the estimates $\hat{T}$, as a comparison, we also include the estimation in which we use the true value $k_BT^{\dagger}=1$ in solving \eqref{match_eq1}. (we name the results of this scenario as partial estimates)
\begin{table}[ht]
\begin{center}
\begin{tabular}{c|c|c|c|c|c}
\hline
& $k_{B}T$ & $\gamma$ & $\epsilon$ & $a$ & $x_0$ \\
\hline
True & 1.0000 & 0.50000 & 0.20000 & 10.000 & 0.0000 \\
\hline
Eq stat full estimates & 0.99903 & 0.50003 & 0.16870 & 10.888 & 0.001120\\
\hline
Eq stat partial estimates & - & 0.50003 & 0.20039 & 10.005 & 0.011263\\
\hline \hline
Essential stat estimates & 0.99903 & 0.50003 & 0.20004 & 9.9741 & 0.008265\\
\hline
\end{tabular}
\caption{Full and partial estimates of the conventional method using the equilibrium statistics (above) and the estimates using essential statistics (below): the low damping case.}
\label{Tab:Classical}
\end{center}
\end{table}
The results have been listed in Table \ref{Tab:Classical}. We clearly observe that the results of the conventional method is sensitive to the value of $\hat{T}$. To explain the sensitivity, recall that in solving \eqref{match_eq1}, $a$ and $x_0$ are viewed as functions of $\epsilon$.  This suggests that the high sensitivity would occur if
\begin{equation}
\frac{\partial \epsilon}{\partial k_BT}=\left(\frac{\partial \mathbb{E}_{p_{eq}}[x^{3}; T,\epsilon,a(\epsilon),x_0(\epsilon)]}{\partial \epsilon}\right)^{-1}\cdot \frac{\partial \mathbb{E}_{p_{eq}}[x^{3};T,\epsilon,a(\epsilon),x_0(\epsilon)]}{\partial k_BT} \gg 1, \nonumber
\end{equation}
where $a(\epsilon)$ and $x_0(\epsilon)$ are given by \eqref{axo} and the implicit function theorem has been used. By direct computation, we get
\begin{equation}
\frac{\partial}{\partial k_{B}T} \mathbb{E}_{p_{eq}}[x^{n}]=\frac{1}{(k_{B}T)^{2}}\left(\mathbb{E}_{p_{eq}}[x^n U(x)]-\mathbb{E}_{p_{eq}}[x^n]\mathbb{E}_{p_{eq}}[U(x)]\right),\quad n=1,2,\dots.
\nonumber
\end{equation}
Since the explicit formula for the partial derivative with respect to $\epsilon$ is not easy to compute, its value will be computed using a finite difference formula. Evaluating it at $(k_B T, \epsilon, a,x_0)=(1, 0.2, 10, 0)$ we have
\begin{equation*}
\begin{split}
\frac{\partial }{\partial k_BT} \mathbb{E}_{p_{eq}}[x^{3}; T,\epsilon,a(\epsilon),x_0(\epsilon)]&=9.034;\\ \frac{\partial }{\partial \epsilon} \mathbb{E}_{p_{eq}}[x^{3};T,\epsilon,a(\epsilon),x_0(\epsilon)]&=0.06396,
\end{split}
\end{equation*}
which supports our observation.

\smallskip

Next we report the estimations obtained from the new approach using essential statistics, and the results are listed in Table \ref{Tab:Classical} (below). The improvement in accuracy for $\epsilon^{\dagger}$ and $a^{\dagger}$ are noticeable compared with the full estimates of the conventional method. Here we used a total of $20$ essential statistics given by $k_A(t_{i};\theta^{\dagger})$ for $t_i=0.1i$, $i=1,2,\dots,20$. We applied Algorithm~\ref{alg:surrogate} to solve an ensemble of two-dimensional ($a$ and $x_0$) nonlinear least-squares problems based on $300$ uniformly generated random initial conditions. Figure \ref{contour:low} (left) shows the contour plot of the cost function together with all of the estimates. Notice that except for few outliers, most of the estimates lie along the low value of the contour of the cost function. The estimate $\hat{\epsilon}$ reported is the average of all the estimates (excluding the outliers). To satisfy the equilibrium constraint, $a(\hat{\epsilon})$ given by \eqref{axo} is used as the estimates of $a^{\dagger}$.

\begin{figure}
\centering
\includegraphics[width=.45\textwidth]{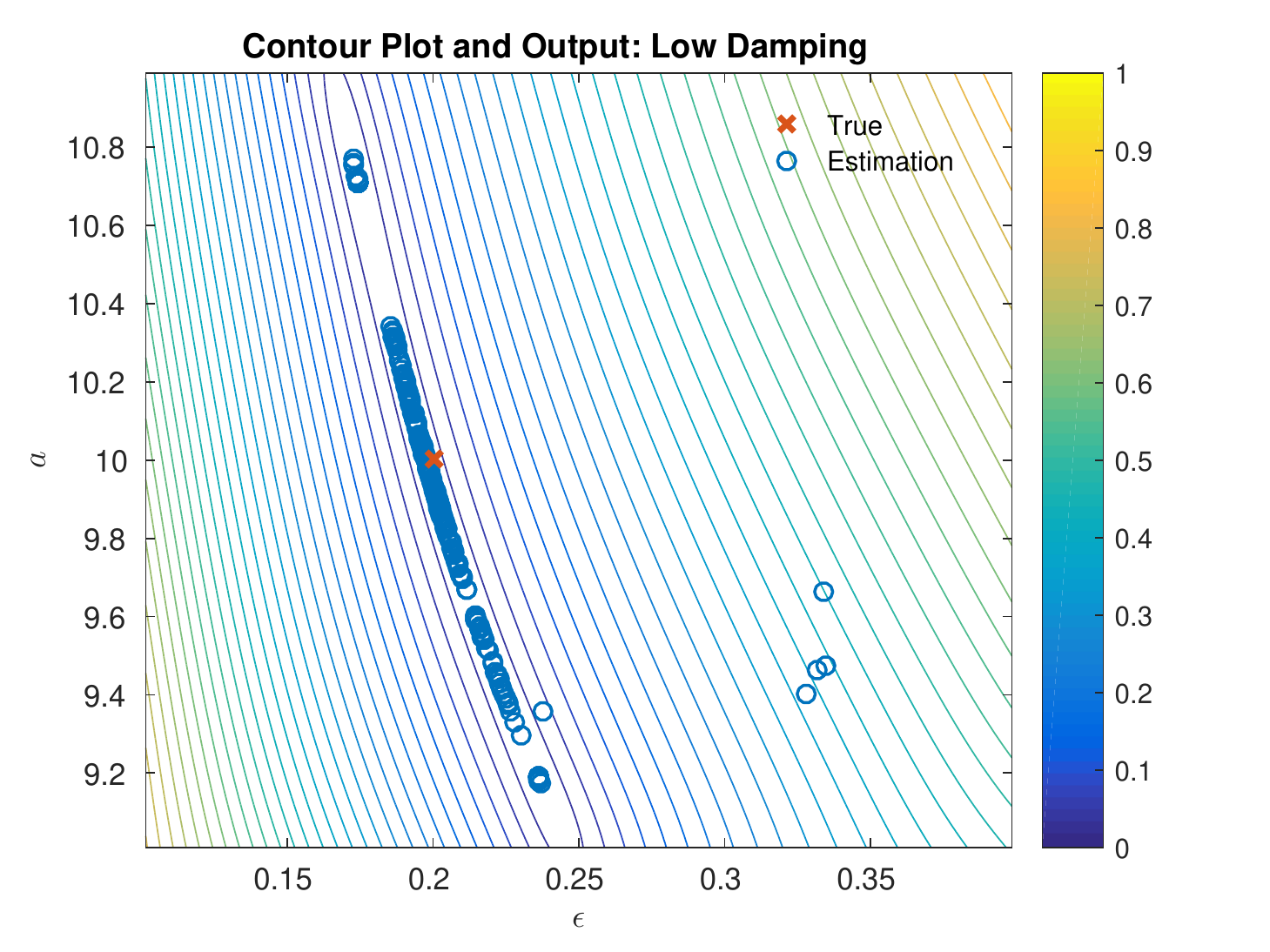}
\includegraphics[width=.45\textwidth]{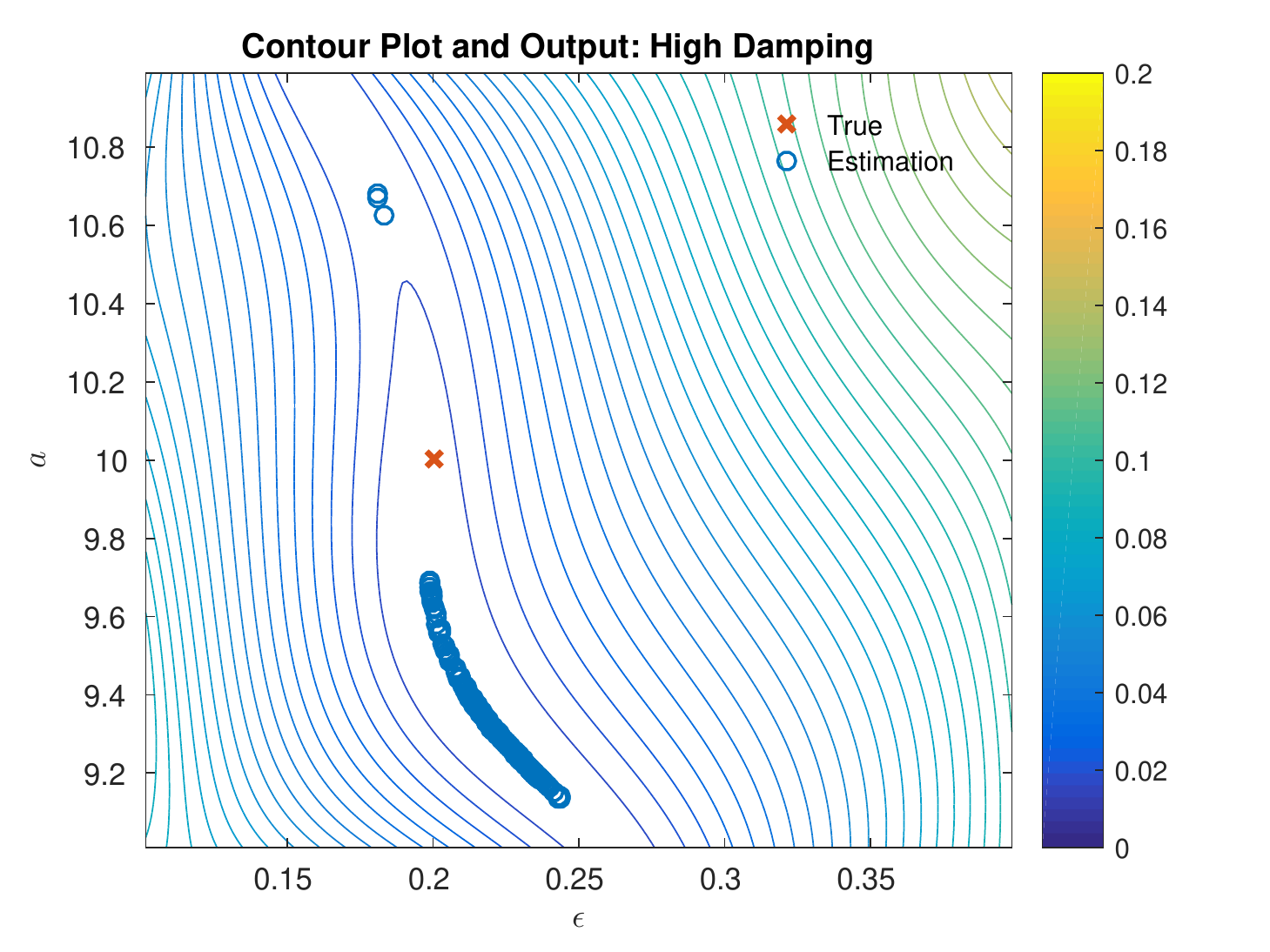}
\caption{The contour plot of the cost function together with the estimates: low damping case (left) and high damping case (right).}
\label{contour:low}
\end{figure}

The results from this test indicate that for the conventional method, 
it is difficult to obtain very accurate estimates for $\epsilon^{\dagger}$, unless $ T^{\dagger}$ can be estimated accurately, e.g., by using longer time series.  In contrast, the proposed approach of using essential statistics is much  less sensitive to the error in $\hat{T}$.  This approach, however, requires a pre-computing step as noted in Algorithm~\ref{alg:surrogate}. In our numerical test, we solved the model \eqref{Lan_sys} on 64 collocation nodes to evaluate the essential statistics over different values of $(\epsilon, a)$, that is, an order $M_C=8$ Chebyshev nodes were used to construct the $\Theta$ used in \eqref{collocation}.  However, as we have previously alluded to, this can be done in parallel, and it will not become a serious issue as long as the dimension of the parameter is relatively small. As for the value of $M$ in \eqref{collocation} we picked $M=6$. (Same scheme was applied to the high damping case)

Another interesting issue arises when the damping parameter is large. Since the  conventional method fully relies on one-point statistics with respect to equilibrium density, it is important to have high-quality independent samples.  In Figure (left) \ref{time correlation: xv}, we show the time correlation of $x$ for both the low damping regimes ($\gamma=0.5$) and high damping regime ($\gamma=5.0$). We observe that in the latter case, the auto-correlation function of $x$ decays much slower, indicating a strong correlation among the samples with small lags. In this case, the estimates from the conventional method will deteriorate due to the difficulty in obtaining high quality independent sampling.

\begin{figure}[ht]
\centering
\includegraphics[width=.45\textwidth]{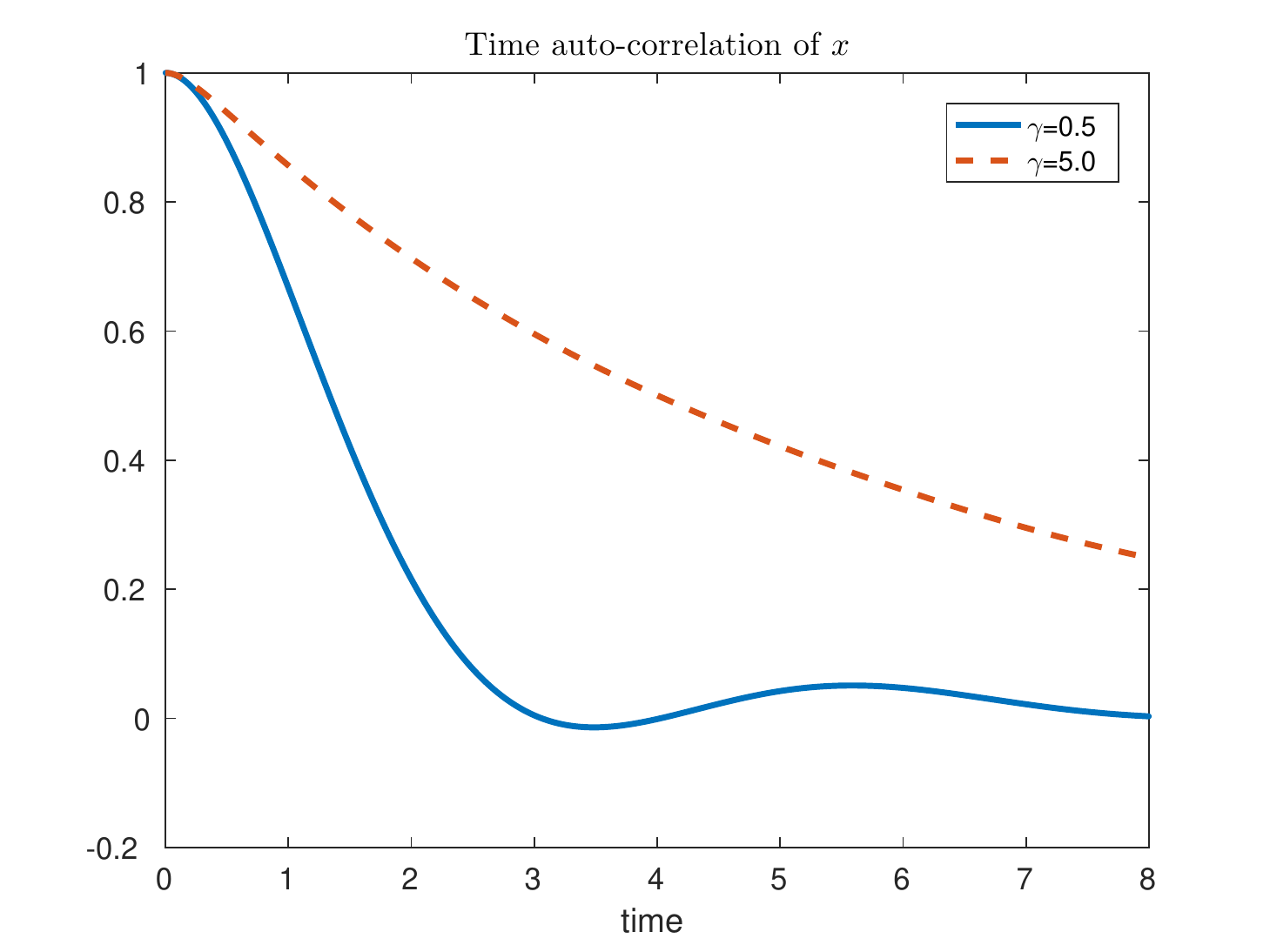}
\includegraphics[width=.45\textwidth]{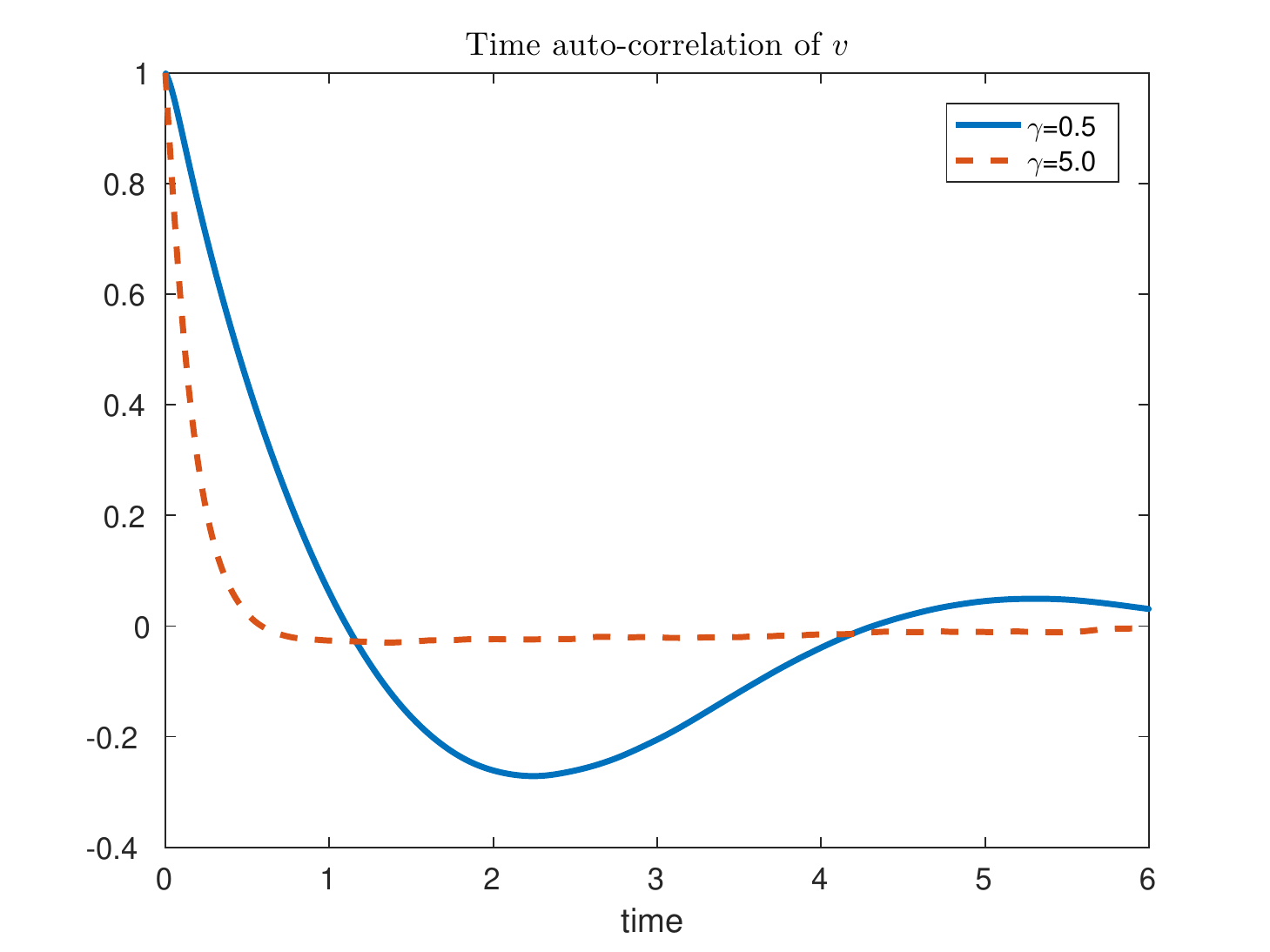}
\caption{The time auto-correlation of $x$ (left) and $v$ (right).}
\label{time correlation: xv}
\end{figure}

\subsubsection{High Damping Case}

In this section, we focus on the high damping regime $\gamma^{\dagger}=5.0$. We will also make a connection between Figure \ref{extended_sys} and the estimation using the essential statistics. As shown in Figure \ref{time correlation: xv} (left), we no longer have a fast decay of the time auto-correlation of $x$. As a result, without changing the sample size, significant error will occur in estimating the moment of $x$ used in the conventional method.
\begin{table}[ht]
\begin{center}
\begin{tabular}{c|c|c|c|c|c}
\hline
& $k_{B}T$ & $\gamma$ & $\epsilon$ & $a$ & $x_0$  \\
\hline
True & 1.0000 & 5.0000 & 0.20000 & 10.000 & 0.0000 \\
\hline
Eq stat full estimates  & 0.99949 & 4.9993 & 0.69579 & 5.4717 & 0.12990\\
\hline
Eq stat partial estimates & - & 4.9993 & 0.25931 & 8.9295 & -0.0034460\\
\hline \hline
Essential stat estimates & 0.99949 & 4.9993 & 0.21160 & 9.8663 &-0.01822\\
\hline
\end{tabular}
\caption{Full and partial estimates of the conventional method using the equilibrium statistics (above) and the estimates using essential statistics (below): the high damping case.}\label{Tab:classical_high}
\end{center}
\end{table}
This can be clearly seen from the results listed in Table \ref{Tab:classical_high}(above). In particular, besides suffering from the sensitivity to $\hat{T}$, the error in estimating $\mathbb{E}_{p^{\dagger}_{eq}}[x^n]$ for $n=1,2,3$ also leads to inaccurate estimates for $\epsilon^{\dagger}$, $a^{\dagger}$ and $x^{\dagger}_0$. We should point out an interesting fact, that is, although the marginal distribution of $x$ at the equilibrium state is independent of $\gamma$, a large value of $\gamma$ causes difficulties in estimating the moments of $x$ in practice.

Table \ref{Tab:classical_high} and Figure \ref{contour:low} (right) show the numerical results using the essential statistics. Although the relative error is not as small as in the low damping case, it is still lower than the estimates from the conventional method. Here is how our method is implemented: Similar to the low damping case, \eqref{axo} is used in estimating the value of $a^{\dagger}$ based on the estimates $\hat{\epsilon}$. The 20 essential statistics here are given by $k_A(t_{i};\theta^{\dagger})$ for $t_i=0.04i+0.2$, $i=1,2,\dots,20$ (suggested by the time auto-correlation of $v$ shown in Figure \ref{time correlation: xv} (right)), which are in a much shorter time interval. Compared to the low damping case, the loss of accuracy can be verified based on the sensitivity analysis (Figure \ref{extended_sys}), which indicated that the parameters $(a,x_0)$ in this regime are less identifiable when $\gamma$ is large.

\section{Example II: A Gradient System with a Triple-Well Potential} \label{sec5}

Our next example  is a gradient system driven by white noise.   We consider a two-dimensional stochastic system as follow, 
\begin{equation}\label{triple}
\td \textbf{x}(t)= -C\nabla V(\textbf{x}) \td t+ \sqrt{2k_{B}T}\td\textbf{W}_{t}, 
\end{equation}
where $\textbf{W}_t$ is a two-dimensional Wiener process, $V$ is a potential energy, and $C$ is a matrix defined by 
\begin{equation}
C=\begin{pmatrix}
1 & -d \\
d & 1 \\
\end{pmatrix},\quad d\in(-1,1). \nonumber
\end{equation}
The generator $\mathcal{L}$ of the system \eqref{triple} is given by,
\begin{equation} \label{generator_well}
\mathcal{L}f=-(V_{x_{1}}-d V_{x_{2}})\frac{\partial f}{\partial x_{1}}-(d V_{x_{1}}+V_{x_{2}})\frac{\partial f}{\partial x_{2}}+k_{B}T \Delta f.
\end{equation}
We choose a triple-well potential function $V$ similar to the model in \cite{Hannachi:01},
\begin{eqnarray}\label{potential_well}
V(x_{1},x_{2}) &=&-v(x_{1}^{2}+x_{2}^{2})-(1-\gamma)v\big((x_{1}-2a)^{2}+x_{2}^{2}\big)-(1+\gamma)v\big((x_{1}-a)^{2}+(x_{2}-a\sqrt{3})^{2}\big)\nonumber \\  &&+0.2[(x_{1}-a)^{2}+(x_{2}-a/\sqrt{3})^{2}],
\end{eqnarray}
with
\begin{equation}
v(z)=10 \exp\left(\frac{1}{z^{2}-a^{2}}\right)\cdot \chi_{(-a,a)}(z),\quad z\in \bb{R},\nonumber
\end{equation}
where $\chi_{(-a,a)}(z)$ denotes the characteristic function over the interval $(-a,a)$. Notice that the matrix $C$ is positive definite. The additional quadratic term $0.2[(x_{1}-a)^{2}+(x_{2}-a/\sqrt{3})^{2}]$ in the triple-well potential \eqref{potential_well} is, again, a smooth retaining potential. It is well known \cite{Pavliotis:16} that the triple-well model \eqref{triple} yields an equilibrium distribution given by
\begin{equation} \label{triple_peq}
p_{eq}(\textbf{x}) \propto \exp\left({-\frac{V(\textbf{x})}{k_{B}T}}\right) \quad \textbf{x}\in \bb{R}^{2},
\end{equation}
which is independent of parameter $d$, the off-diagonal element of $C$.

In the numerical tests, we set $(d^{\dagger},a^{\dagger},k_B T^{\dagger},\gamma^{\dagger})=(0.5,1,1.5,0.25)$ as the true values of the parameters. Figure \ref{potential_scatter} shows the contour plot of the potential and the scatter plot of the time series under this set of parameters. To generate the data from \eqref{triple}, we applied the weak trapezoidal method introduced in \cite{Anderson:09}, which is a weak second-order method. {  In the remainder of this section, we will choose appropriate essential statistics from which $\{d,T\}$ can be directly estimated and the remaining parameters, $\{a,\gamma\}$, will be estimated using an appropriate least-square problem.}

\begin{figure}[ht]
	\centering
	\includegraphics[scale=0.4]{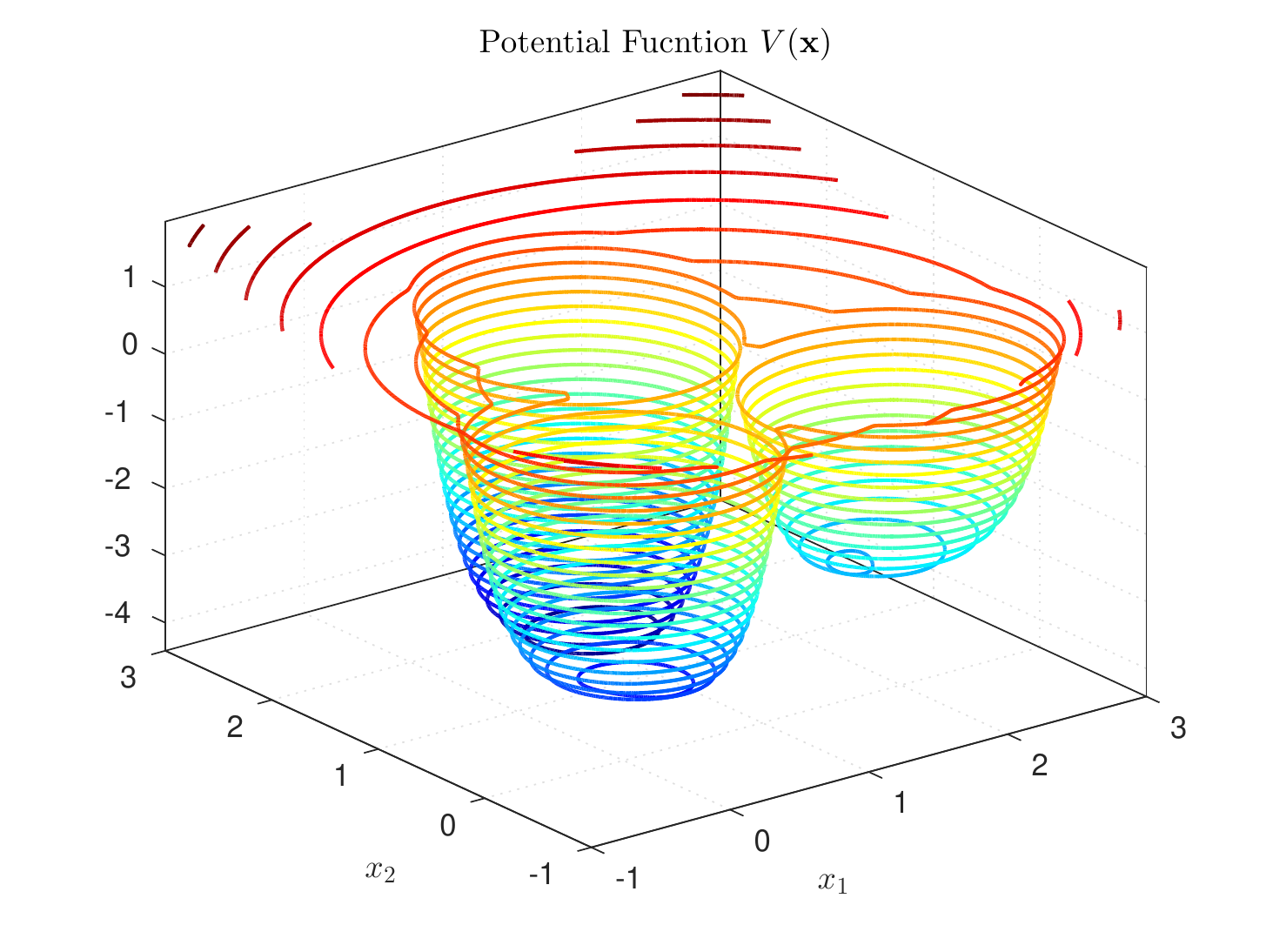}
	\includegraphics[scale=0.4]{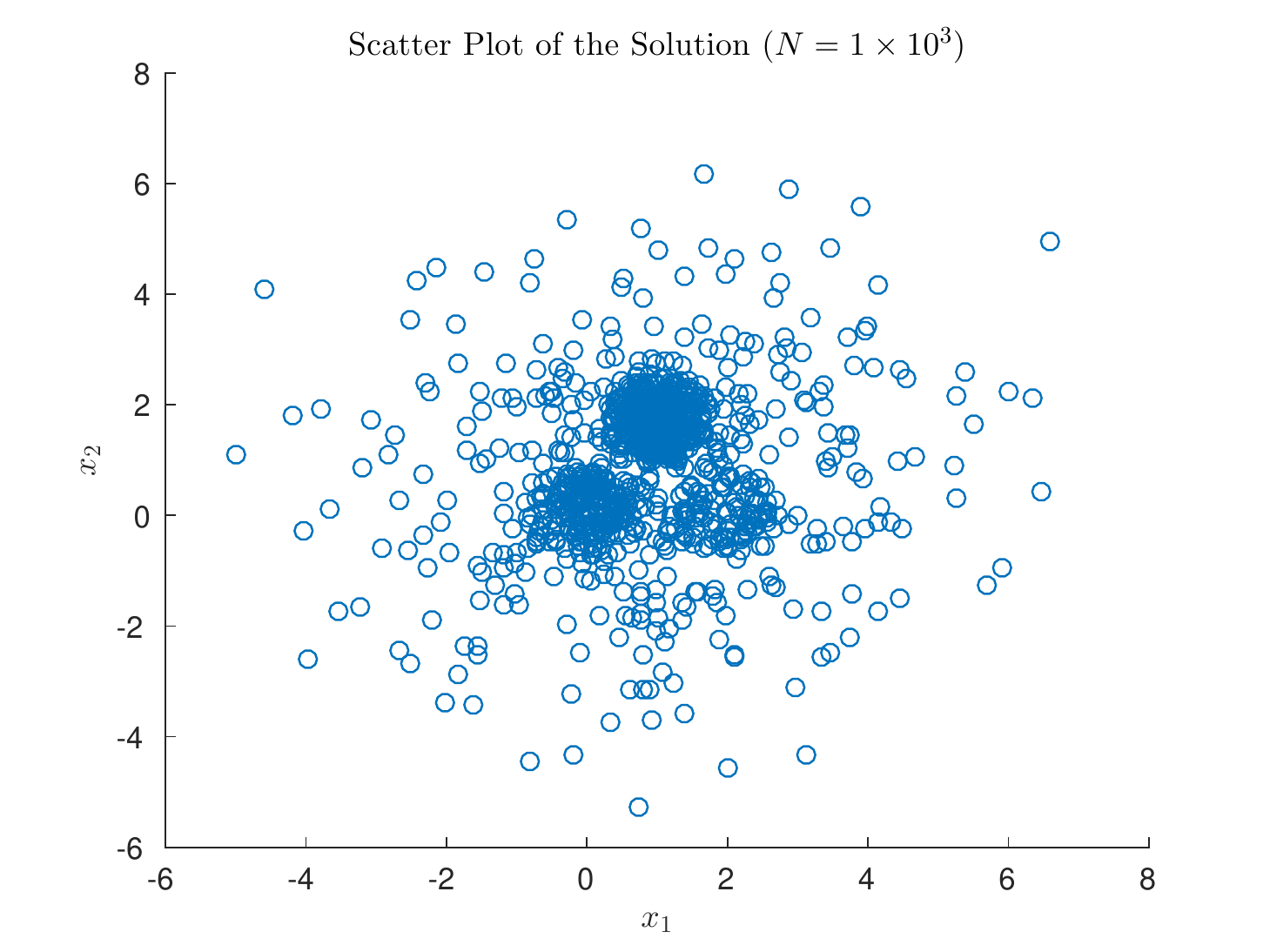}
	\caption{The contour plot of the potential (left) and the scatter plot of the solution with $10^3$ sample points (right).}
	\label{potential_scatter}
\end{figure}

\subsection{The Essential Statistics and Reduction of the Parameter Space}

Similar to the Langevin model, we consider an external forcing that is constant in $x$. Subsequently, the perturbed dynamics is given by,
\begin{equation}\label{per_triple}
\td \textbf{x}(t)= (-C\nabla V(\textbf{x})+ f(t) \delta ) \td t+ \sqrt{2k_{B}T}\td \textbf{W}_{t},
\end{equation}
where $|\delta| \ll 1$. If we select $A(\textbf{x}):=\textbf{x}$ as the observable, the corresponding linear response operator reads
\begin{equation}
k_{A}(t;\theta)=\mathbb{E}_{p_{eq}}[A(\textbf{x}(t))\otimes B(\textbf{x}(0))], \nonumber
\end{equation}
where $B=(B_{1},B_{2})^{\top}$ with
\begin{equation}
B_{i}(x_1,x_2)=\frac{1}{k_{B}T}\frac{\partial}{\partial x_i}V(x_1,x_2), \quad i=1,2, \nonumber
\end{equation}
given by \eqref{RA}. As a result, the entries of the linear response operator $k_{A}(t;\theta)$ satisfy
\begin{equation}\label{potential-esst}
k_{(i,j)}(t;\theta):=\frac{1}{k_{B}T}\mathbb{E}_{p_{eq}}[x_{i}(t)V_{x_{j}}(x_{1}(0),x_{2}(0))], \quad  i,j=1,2, 
\end{equation}
where $V_{x_{i}}$ denotes the partial derivative of $V$ with respect to $x_{i}$. Using integration by parts one can show that
\begin{equation}
\begin{split}
\int_{\bb{R}} x_{i}V_{x_i}(x_1,x_2)\exp(-V(x_1,x_2)/k_{B}T)\td x_j =
\delta_{ij}k_B T \int_{\bb{R}} \exp(-V(x_1,x_2)/k_{B}T)\td x_j, \quad i,j=1,2, \nonumber
\end{split}
\end{equation}
which leads to
\begin{equation}\label{triple_T}
k_{i,j}(0;\theta)=\delta_{ij}.
\end{equation}
This is known as the equipartition of the energy in statistical mechanics.

\medskip

Notice that the response operators $k_{(i,j)}(t;\theta^\dagger)$ are not accessible due to the fact that the function $V_{x_i}(\textbf{x},\theta^\dagger)$ depends on unknown true parameter values $(k_B T^\dagger,a^\dagger,\gamma^\dagger)$. This is precisely one of the issue raised in Remark~\ref{remark}. For gradient flow systems, this problem can be overcome by introducing a linear transformation to another set of two-point statistics. We define,
\begin{equation}\label{m_t}
m_{i,j}(t;\theta):=\bb{E}_{p_{eq}}[x_{i}(t)x_{j}(0)], \quad i,j=1,2,
\end{equation}
and consider their time derivatives. Following the same calculations that led to \eqref{diff_with_gen}, we obtain the following identities for $t>0$
\begin{equation}\label{mt}
\frac{\td }{\td t}m_{i,j}(t;\theta)=k_{B}T\big(-k_{j,i}(-t;\theta)+(-1)^{i-1}d\cdot k_{j,3-i}(-t;\theta)\big), \quad i,j=1,2. 
\end{equation}
It is more helpful to rewrite \eqref{mt} into a linear system for $t>0$
\begin{equation}\label{linear_sys}
k_BT
\begin{pmatrix}
-1 & d & 0 & 0 \\ 0 & 0 & -1 & d \\ -d & -1 & 0 & 0 \\ 0 & 0& -d &-1 
\end{pmatrix}
\begin{pmatrix}
k_{1,1}(-t) \\ k_{1,2}(-t) \\ k_{2,1}(-t) \\ k_{2,2}(-t)
\end{pmatrix}=
\begin{pmatrix}
m'_{1,1}(t) \\ m'_{1,2}(t) \\ m'_{2,1}(t) \\ m'_{2,2}(t)
\end{pmatrix},
\end{equation}
where the coefficient matrix is non-singular since $d\in(-1,1)$. This linear relationship suggests that one can consider fitting $m_{i,j}$ in placed of $k_{i,j}$, since the former is numerically accessible.

Let $t\rightarrow 0^{+}$ and apply \eqref{triple_T} to the first and the third equations of \eqref{linear_sys} under $\theta=\theta^{\dagger}$, we obtain the following estimates for $d^{\dagger}$ and $T^{\dagger}$
\begin{equation} \label{reduction_triple}
k_{B}T^{\dagger}=-m'_{1,1}(0^{+};\theta^{\dagger}); \quad d^{\dagger}=-m'_{2,1}(0^{+};\theta^{\dagger})/k_{B}T^{\dagger},
\end{equation}
where $m'_{i,j}(0^{+};\theta^{\dagger})$ are computable from the available sample. Equations~\eqref{reduction_triple} reduces the problem into a two-dimensional problem of estimating $(a^{\dagger},\gamma^{\dagger})$ only.

\subsection{Numerical Results}

As we have pointed out earlier, the linear response operator, $k_{(i,j)}(t;\theta^{\dagger})$ \eqref{potential-esst}, are not directly accessible. Motivated by the linear relation in \eqref{linear_sys}, we consider to infer the values $(a^{\dagger},\gamma^{\dagger})$ from the two-point statistics $m_{i,j}(t;\theta^{\dagger})$ \eqref{m_t}. In particular, we choose $m_{1,1}(t_{i};\theta^{\dagger})$ with $t_{i}=0.1i$, $i=1,\dots,20$, as the essential statistics in our numerical test. 

In light of the fact that  $m_{1,1}(t;\theta)$ is the two-point statistics of $x_1$, we can apply the empirical a priori sensitivity test by checking the dependence of $\hat{k}(t,\theta)$ over the training collocation nodes $\theta=(a,\gamma)\in \Theta$, which are available to us from Step~2 of Algorithm~\ref{alg:surrogate}. In this numerical experiment, the training used 25 collocation nodes, $\theta\in\Theta$. To validate this empirical sensitivity test, we also perform the local sensitivity analysis as described in Section~\ref{local_sens_analysis} to parameters $a$ and $\gamma$ (which, again, is not possible in practice since the true parameters are unknown). The results are shown Figure \ref{identify} (left).  From the scales of $\bar{\textbf{x}}_{a}$ and $\bar{\textbf{x}}_{\gamma}$ (average over $2000$ realizations of $\textbf{x}$), we can see that this two-point statistic, $m_{1,1}(t;\theta^{\dagger})$, is quite sensitive to the parameters $\gamma$ and $a$, with stronger dependence on the parameter $a$. 
\begin{figure}[ht]
	\centering
	\includegraphics[scale=0.5]{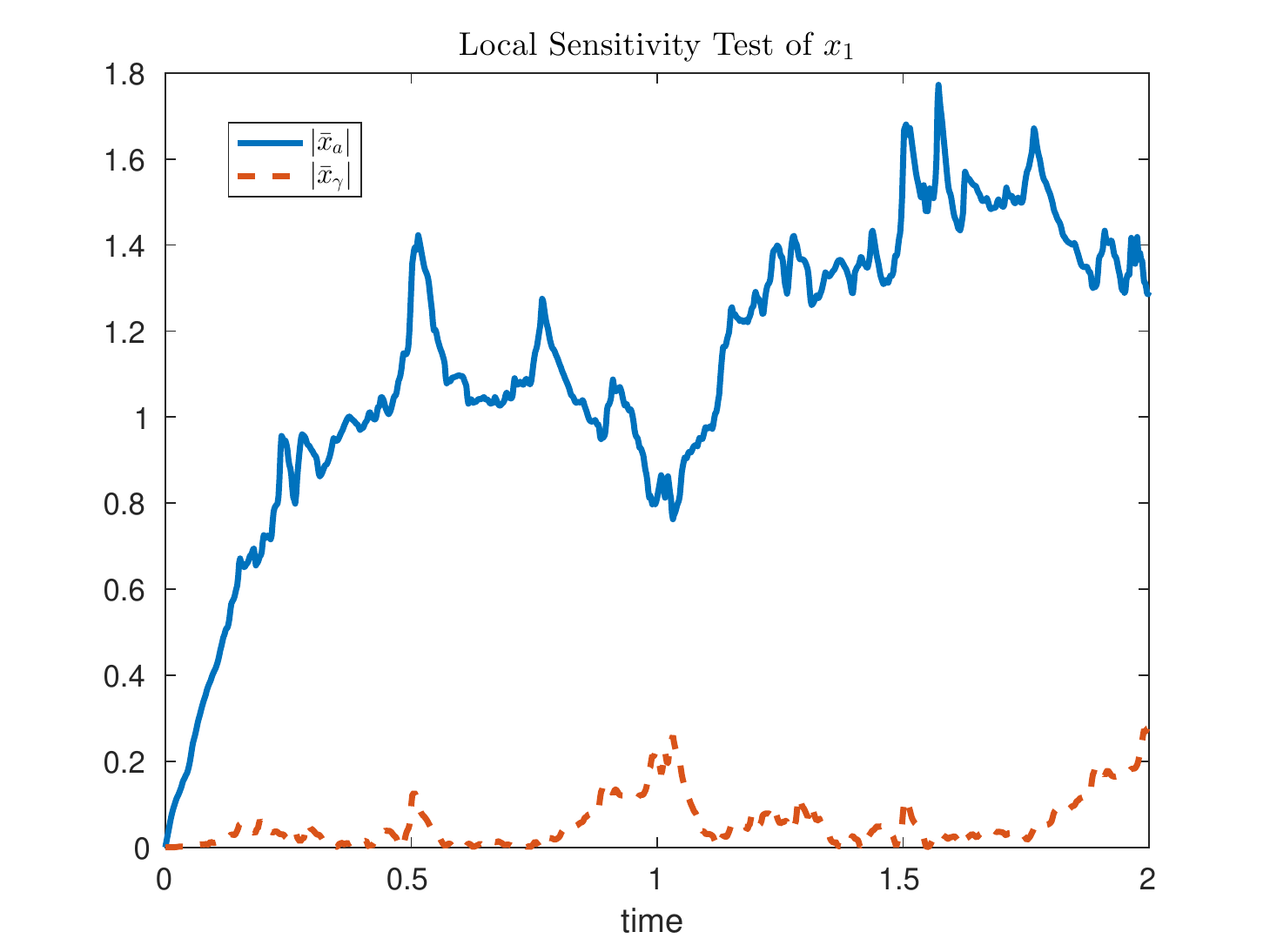}
	\includegraphics[scale=0.5]{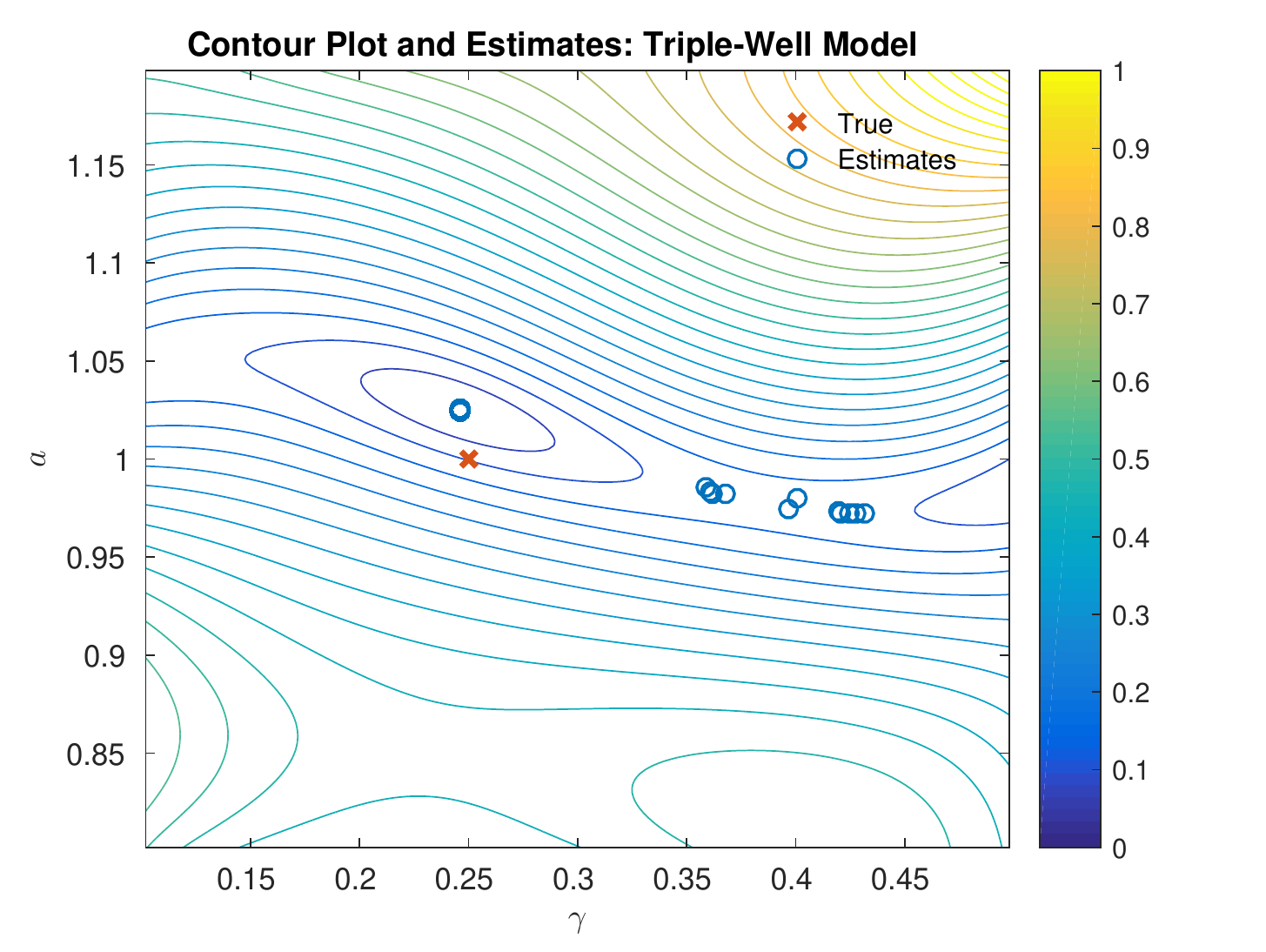}
	\caption{Local sensitivity test of $x_1$ with respect to parameter $a$ and $\gamma$ (left) and the contour plot of the cost function together with the estimates (right).}
	\label{identify}
\end{figure}
Based on a time series of size $4\times 10^6$ (with time lag $h=1\times 10^{-3}$), we implemented Algorithm~\ref{alg:surrogate} with $M=4$ and collocation nodes, $\Theta$, constructed from order $M_C=5$ Chebyshev nodes in \eqref{collocation}. 

\begin{table}[ht]
	\begin{center}
		\begin{tabular}{c|c|c|c|c}
			\hline
			& $a$ & $\gamma$ & $k_{B}T$ & $d$  \\
			\hline
			True & 1.0000 & 0.2500 &1.5000 & 0.5000  \\
			\hline
			Estimates & 1.0249 & 0.2463& 1.4946 & 0.5077 \\
			\hline
		\end{tabular}
		\caption{The estimation results: the Triple-Well model.}
		\label{Tab:triple}
	\end{center}
\end{table}
The estimates are shown in Table \ref{Tab:triple}. We also show in Figure \ref{identify} (right) the contour plot of the cost function together with the estimates based on $300$ uniformly generated initial guesses. Notice that the true parameter value does not lie on the lowest contour value and these discrepancies are due to the surrogate modeling and quality of the samples. However, the estimates $\hat{a}$ and $\hat\gamma$ are still reasonably accurate, as reported in Table \ref{Tab:triple}. We should point out that the estimates $\hat{a}$ and $\hat{\gamma}$ reported in this table are the average of all the estimates (excluding the outliers), while the estimates $k_{B}\hat{T}$ and $\hat{d}$ are obtained by solving \eqref{reduction_triple}. The contour plot also confirms the sensitivity analysis which suggested that estimating $a^{\dagger}$ is easier than $\gamma^{\dagger}$. 

\begin{figure}[ht]
	\centering
	\includegraphics[scale=0.5]{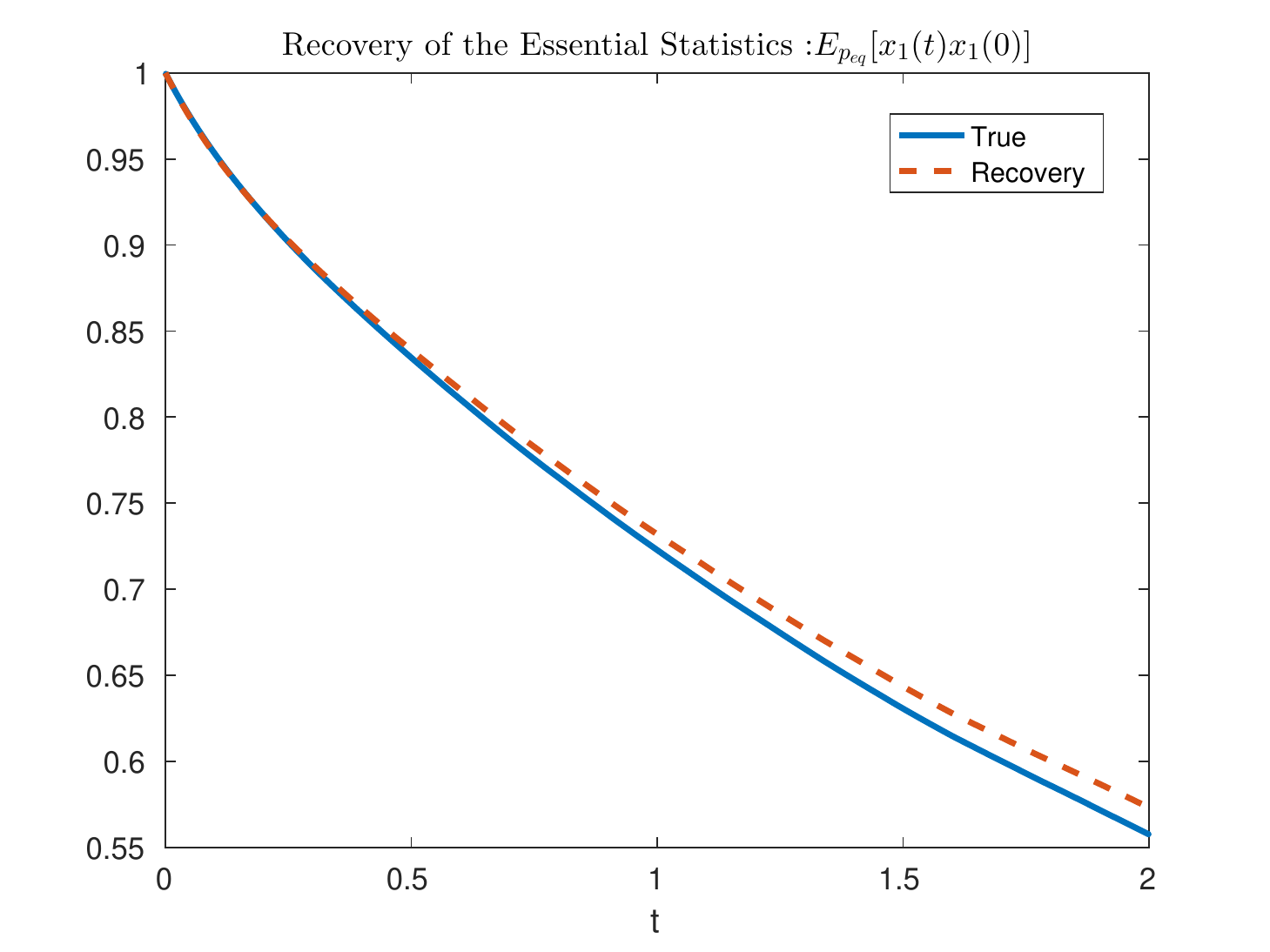}
	\includegraphics[scale=0.5]{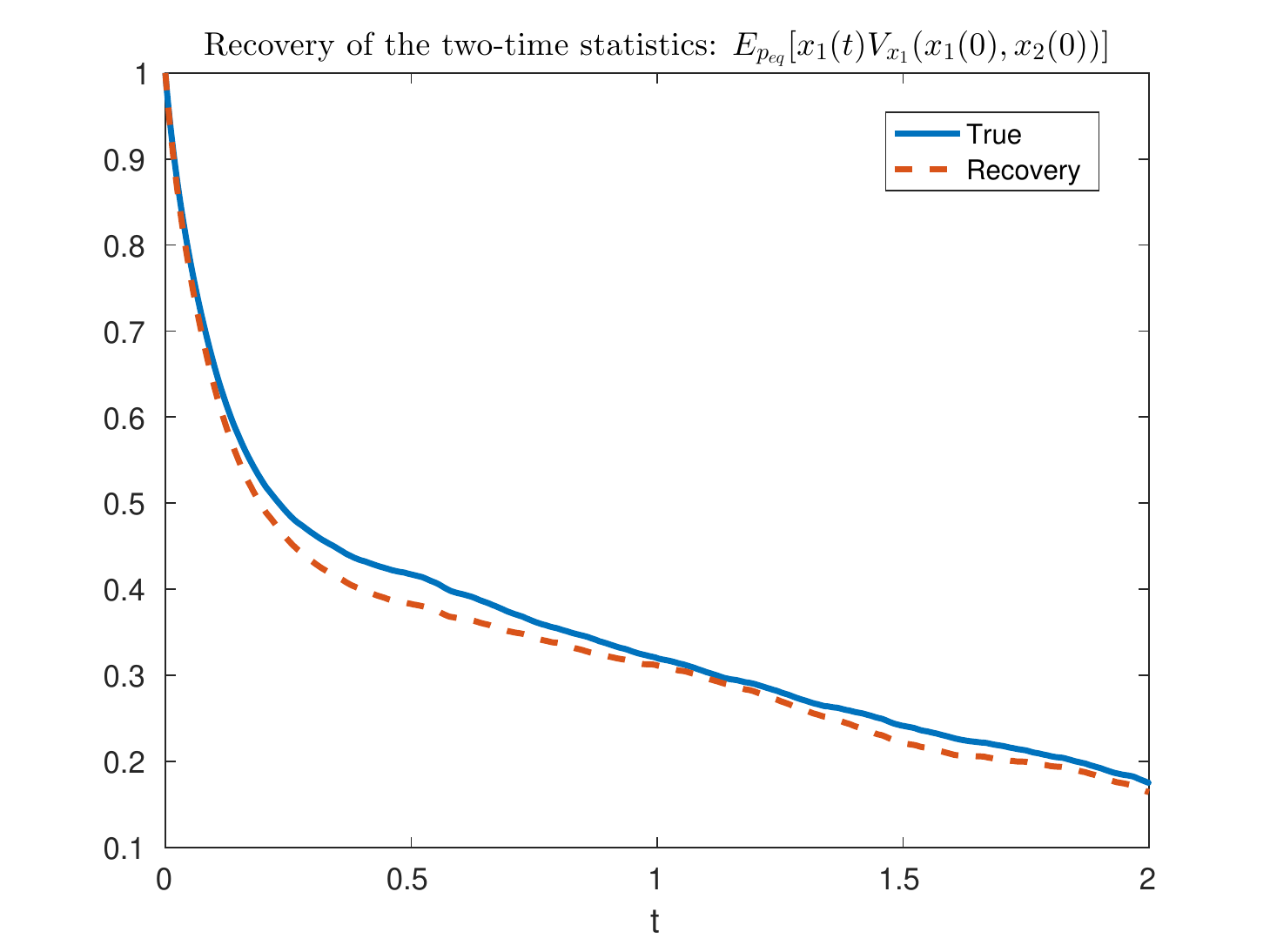}
	\caption{The recovery of the response statistics $\mathbb{E}_{p_{eq}}[x_1(t)x_{1}(0)]$ (left) and the two-point statistics $\mathbb{E}_{p_{eq}}[x_1(t)V_{x_{1}}(x_1(0),x_2(0))]$ (right). All the trajectories have been normalized such that they start at point $(0,1)$.}
	\label{reco}
\end{figure}

One interesting question is whether the approximate values of the parameters can reproduce the essential statistics, which is useful in predicting non-equilibrium averages in the presence of external forces.  Using the estimates reported in Table \ref{Tab:triple}, we compare the resulting two-point statistics used in estimating the parameters, $m_{1,1}(t;\hat\theta)$, with the corresponding true statistics, $m_{1,1}(t;\theta^\dagger)$ (see Figure~\ref{reco}, left). In Figure~\ref{reco} (right), we also compare the estimated response operator, $k_{1,1}(t,\hat\theta)$, and the true response operator, $k_{1,1}(t,\theta^\dagger)$. Excellent agreement is found.

\section{Summary and Further Discussion} \label{sec6}

This paper presented a parameter estimation method for stochastic models in the form of It\^o drift diffusions. { To infer parameters that do not appear in the equilibrium density function, we proposed to fit two-point} {\it essential statistics} formulated using the fluctuation dissipation theory. Building upon the framework established in our previous work \cite{HLZ:17}, we formulated the problem as a nonlinear least-squares problem subjected to a dynamical constraint { when the parameters cannot be estimated directly by solving the corresponding statistical constraints.} To avoid expensive computational cost in evaluating the essential statistics at each iteration when Gauss-Newton method is used, we proposed to solve an approximate least-squares problem based on the polynomial surrogate modeling approach. This approach is motivated by the fact that sampling error cannot be avoided in computing the value of essential statistics and the essential statistics are smooth functions of the parameters. We guaranteed the existence of minimizers of the approximate least-squares problem that converge to the solution of the true least-squares problem that involves the essential statistics under the assumption that these statistics are smooth functions of the parameters. We also showed that the polynomial approximate least-squares problem has a Jacobian that is full rank almost everywhere, which implies the local convergence of Gauss-Newton solutions. We tested the proposed methods on two examples that belong to two large classes of stochastic models --- a Langevin dynamics model and a stochastic gradient system. In general, we expect that the parameter estimation procedure should be carried out as follows.

\begin{enumerate}
	
	\item Reduce the parameter space by direct estimation using appropriate statistics that are easily computable, whenever this is possible.
	
	\item Based on the observable of interest, the functional form of the equilibrium density, the external forcing term, and the available data,
	identify appropriate essential statistics for the remaining parameters using a priori sensitivity analysis test. In our implementation, we compare the essential statistics computed on the training parameter values  (e.g., collocation nodes).  A more elaborate global sensitivity analysis technique such as the Sobol index \cite{sobol:93} can be used to determine the parameter identifiability.
	
	\item For the remaining parameters, formulate a nonlinear least-squares problem using the appropriate essential statistics at some training parameter values.
	
	\item Apply algorithm \ref{alg:surrogate} to obtain the estimates $\hat{\theta}$ of the true parameter values $\theta^{\dagger}$.
	
	\item Apply a sensitivity analysis, such as the local sensitivity analysis as discussed in Section~\ref{local_sens_analysis} as a posteriori confidence check for the estimates. If the parameters are found to be insensitive the selected response statistics, go back to Step 3 and choose a different set of response statistics.
	
\end{enumerate}

One of the restrictions with our formulation is to be able to compute $B^{\dagger}$ in \eqref{hatkA}, which may require knowledge of (some of) the true values $\theta^\dagger$ and this is not feasible in general. As a result, the non-linear system \eqref{nonlin_sys} cannot be evaluated without knowing the true parameters. For the Langevin dynamics, we found that the parameter $k_BT^\dagger$ in $B^\dagger$ can be estimated with the equilibrium statistics, variance of $v$. For the gradient flow problem, there exists an invertible linear transformation between the linear response statistics that are not computable and other two-point statistics that can be estimated without knowing the true parameter values. Since the numerical algorithm \ref{alg:surrogate} does not rely on the FDT formulation, one can apply it on any available two-point statistics, e.g., time auto-correlations of the solution. We are not aware of a general routine that bypasses this difficulty. But as we have demonstrated, at least for general Langevin dynamics models and stochastic gradient systems, this idea will work. 

{  While the proposed method requires the knowledge of the equilibrium density function of the unperturbed system, one can relax this condition and estimate it from the data. For low-dimensional problems, one can use the non-parametric kernel density estimation method \cite{rosenblatt1956remarks,parzen1962estimation}. If the invariant distribution of the dynamical system can be characterized by a density function defined on a smooth manifold (embedded in a high-dimensional phase space) that the data lie on or are close to, then one can estimate the density function by applying the theory of kernel embedding of distribution \cite{smola2007hilbert} on a Hilbert space with basis functions defined on the data manifold. Analogous to the conditional density estimation proposed in \cite{Berry2017MWR,jh:18}, these data-driven basis functions can be obtained via the diffusion maps algorithm \cite{Coifman2006ACHA,Berry2016ACHA}. Another alternative to nonparametric methods is to consider a parametric density function estimation using the moment constrained maximum entropy principle \cite{jaynes:57}. For densities with moderate, say five to seven, dimensions, one can use a recently developed Equation-By-Equation (EBE) method \cite{hh:18} for solving the moment constrained maximum entropy problems. The source code of the EBE scheme is available at \cite{ebecodes}.}

One of our ultimate goals is to use this method for parameter estimation of molecular modeling, in which a Langevin type of model is usually used. The potential energy typically involves a large set of parameters, e.g., the stiffness constants associated with bond stretching and deformation of bond angles,  as well as the coefficients in the van der Waals and screened electro-static interactions. The selection of damping coefficients is also non-trivial, see the review paper \cite{noid2013perspective} for various perspectives. Compared to the existing methods, the novelty of our approach is that we use the response statistics to formulate the parameter estimation problem, which can reveal parameters that do not appear in the equilibrium density. In addition, the polynomial surrogate model provides an efficient mean to solve the nonlinear least-squares problem. 

To achieve this goal, however, there are some remaining challenges. For large parameter space, we suspect that one needs to combine the existing methods to compute the parameters associated with the equilibrium density as one way to reduce the problem (as suggested in Step 1 above) with the proposed method to estimate the remaining parameters. Second, the proposed method requires high quality and possibly large amount of samples for accurate evaluation of the essential statistics. However, since the essential statistics depend on the choice of the observable and external forcing, approximating the high-dimensional integral can be avoided so long as the dimensions of the ranges of these two functions are small. Finally, the underlying model might be subject to modeling error. For example, the model may be derived from a multiscale expansion or just empirically postulated. The formulation of the response statistics in the presence of modeling error will be investigated in separate works.

{\bf Acknowledgments.} This research was partially supported by the NSF Grant DMS-1619661. XL is also supported by the NSF Grant DMS-1522617. JH is also supported by the ONR Grant N00014-16-1-2888.

\bigskip

\appendix
\section{Proof of Theorem \ref{thm:convN}}\label{appA}

In section \ref{subsec:con} we presented the convergence result theorem \ref{thm:convN} and included the proof for one-dimensional case. Here we show the proof for the general $N$-dimensional case. We will first generalize Lemma \ref{lem:p}-\ref{thm:p2} to the $N$-dimensional case.

\begin{lem} \label{lem:p2}
		Let $f\in C^{\ell}([-1,1]^{N})$. For the coefficient of the corresponding least-squares approximation $\alpha_{\vec{k}}$, we have
		\begin{equation*}
		\lim_{\lambda \rightarrow +\infty}   \alpha_{\vec{k}+\lambda \vec{e}_{i}}(k_{i}+\lambda)^{\ell}=0, 
		\end{equation*}
		where $k_i$denote the i$^{\text{th}}$ component of $\vec{k}$ and $\vec{e}_{i}$ is the $i^{\text{th}}$ unit vector.
\end{lem}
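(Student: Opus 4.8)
The plan is to reduce the $N$-dimensional statement to the one-dimensional Lemma~\ref{lem:p} by exploiting the tensor-product structure of the basis $P_{\vec{k}}(\theta)=\prod_{j=1}^{N}P_{k_j}(\theta_j)$. Fix a multi-index $\vec{k}$ and single out the $i$-th coordinate. Writing the least-squares coefficient as an integral against the orthonormal basis,
\begin{equation*}
\alpha_{\vec{k}+\lambda\vec{e}_i}=\int_{[-1,1]^{N}} f(\theta)\,P_{k_i+\lambda}(\theta_i)\prod_{j\neq i}P_{k_j}(\theta_j)\,d\mu(\theta),
\end{equation*}
I would carry out the integration over all coordinates except $\theta_i$ first and introduce the partial integral
\begin{equation*}
g(\theta_i):=\int_{[-1,1]^{N-1}} f(\theta)\prod_{j\neq i}P_{k_j}(\theta_j)\prod_{j\neq i}d\mu(\theta_j),
\end{equation*}
so that $\alpha_{\vec{k}+\lambda\vec{e}_i}=\int_{-1}^{1} g(\theta_i)\,P_{k_i+\lambda}(\theta_i)\,d\mu(\theta_i)$ is precisely the $(k_i+\lambda)$-th one-dimensional Legendre coefficient of $g$. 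The components $k_j$ with $j\neq i$ are held fixed throughout the limit, so $g$ is a fixed function of the single variable $\theta_i$.

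The second step is to verify that $g\in C^{\ell}([-1,1])$. Since $f\in C^{\ell}([-1,1]^{N})$, the partial derivatives $\partial^{m} f/\partial\theta_i^{m}$ for $0\le m\le\ell$ exist and are continuous, hence bounded, on the compact cube $[-1,1]^{N}$. Differentiating under the integral sign, which is justified by this boundedness together with the standard Leibniz rule for integrals depending on a parameter over a compact domain, yields
\begin{equation*}
g^{(m)}(\theta_i)=\int_{[-1,1]^{N-1}}\frac{\partial^{m} f}{\partial\theta_i^{m}}(\theta)\prod_{j\neq i}P_{k_j}(\theta_j)\prod_{j\neq i}d\mu(\theta_j),
\end{equation*}
which is continuous in $\theta_i$ for each $m\le\ell$. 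Thus $g$ inherits $C^{\ell}$ regularity in $\theta_i$ from $f$.

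Finally I would apply Lemma~\ref{lem:p} to the scalar function $g\in C^{\ell}([-1,1])$. Writing $n:=k_i+\lambda$ for the Legendre index in the $i$-th direction, the lemma gives $\lim_{n\to+\infty}\alpha_{\vec{k}+(n-k_i)\vec{e}_i}\,n^{\ell}=0$, which is exactly the assertion $\lim_{\lambda\to+\infty}\alpha_{\vec{k}+\lambda\vec{e}_i}(k_i+\lambda)^{\ell}=0$. The only delicate point is the interchange of differentiation and integration used to establish the smoothness of $g$, but this is routine because the domain is compact and the relevant derivatives of $f$ are continuous, so no genuine obstacle arises. The essential work is done by the tensor-product reduction, which makes the $N$-dimensional coordinatewise decay a direct corollary of the one-dimensional result.
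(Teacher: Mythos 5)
Your proposal is correct and follows essentially the same route as the paper's own proof: both reduce to the one-dimensional Lemma~\ref{lem:p} by defining the identical partial integral $g(\theta_i)$ (the paper writes the weight as $P_{\vec{k}}(\theta)/P_{k_i}(\theta_i)$, which is exactly your $\prod_{j\neq i}P_{k_j}(\theta_j)$) and identifying $\alpha_{\vec{k}+\lambda\vec{e}_i}$ as the $(k_i+\lambda)$-th Legendre coefficient of $g$. The only difference is that you justify the $C^{\ell}$ regularity of $g$ via differentiation under the integral sign, a step the paper dismisses as ``easy to check.''
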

\begin{proof}
	To apply Lemma \ref{lem:p}, we introduce the single-variable function
	\begin{equation*}
	\begin{split}
	g(\theta_{i}):= \int_{[-1,1]^{N-1}} f(\theta)\frac{P_{\vec{k}}(\theta)}{P_{k_{i}}(\theta_{i})}\td \mu(\theta_{1})\cdots 
	\td \mu(\theta_{i-1})\td \mu(\theta_{i+1})\cdots \td\mu(\theta_{N}).
	\end{split}
	\end{equation*}
	It is easy to check that $g(\theta_{i})\in C^{\ell}([-1,1])$ and 
	\begin{equation*}
	\int_{-1}^{1}g(\theta_i) P_{k_{i}+\lambda}(\theta) \td \mu(\theta_i) =\alpha^{\vec{j}}_{\vec{k}+\lambda \vec{e}_{i}}.
	\end{equation*}
	Consider the least-squares approximation of $g(\theta_i)$ and apply Lemma \ref{lem:p} to the corresponding coefficients. We are able to draw the conclusion.
\end{proof}

\begin{lem}\label{lem:p3}
	Let $f\in C^{\ell}([-1,1]^{N})$ with $\ell>\frac{3}{2}N$, and consider its least-squares approximation $f^{M}$ given by \eqref{expansion}. Then we have $\|f-f^{M}\|_{\infty}\rightarrow 0$ as $M\rightarrow +\infty$.
\end{lem}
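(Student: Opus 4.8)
The plan is to follow the one–dimensional template of Lemma~\ref{lem:p1}: reduce the claim to the \emph{absolute} (hence uniform) convergence of the Fourier--Legendre series of $f$, and then identify the uniform limit with $f$ itself. Writing the least–squares approximation as $f^{M}=\sum_{\|\vec k\|_{\infty}\le M}\alpha_{\vec k}P_{\vec k}$, the error is the tail $f-f^{M}=\sum_{\|\vec k\|_{\infty}>M}\alpha_{\vec k}P_{\vec k}$, so by the triangle inequality $\|f-f^{M}\|_{\infty}\le\sum_{\|\vec k\|_{\infty}>M}|\alpha_{\vec k}|\,\|P_{\vec k}\|_{\infty}$. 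Since $p_{n}(1)=1$ and $P_{n}=p_{n}\sqrt{n+\tfrac12}$, the tensor structure gives $\|P_{\vec k}\|_{\infty}=\prod_{j=1}^{N}\sqrt{k_{j}+\tfrac12}$. Thus it suffices to show that the full series $\sum_{\vec k}|\alpha_{\vec k}|\prod_{j}\sqrt{k_{j}+\tfrac12}$ converges; its tail then vanishes as $M\to\infty$. Because $\{P_{\vec k}\}$ is complete in $L^{2}$, the resulting continuous uniform limit must coincide with $f$.

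The crux is a \emph{uniform} coefficient bound of the form $|\alpha_{\vec k}|\le C\,(1+\max_{j}k_{j})^{-\ell}$, with $C$ depending only on $\|f\|_{C^{\ell}}$ and $N$. To obtain it I would refine the argument behind Lemma~\ref{lem:p2}. Fix the direction $i$ realizing $\max_{j}k_{j}$ and write $\alpha_{\vec k}=\int_{-1}^{1}g_{i}(\theta_{i})P_{k_{i}}(\theta_{i})\,d\mu$, where $g_{i}$ is $f$ integrated against the remaining Legendre factors, exactly as in the proof of Lemma~\ref{lem:p2}; note $g_{i}$ depends only on the indices $k_{j}$ with $j\ne i$. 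Differentiating under the integral, $g_{i}^{(\ell)}(\theta_{i})=\int(\partial_{i}^{\ell}f)\prod_{j\ne i}P_{k_{j}}\,d\mu$, whence $\|g_{i}^{(\ell)}\|_{\infty}\le\|\partial_{i}^{\ell}f\|_{\infty}\prod_{j\ne i}\|P_{k_{j}}\|_{L^{1}}$. The key elementary observation is that $\|P_{n}\|_{L^{1}}\le c_{0}$ uniformly in $n$ (Cauchy--Schwarz against the $L^{2}$ normalization, with $c_{0}=\sqrt{\mu([-1,1])}$), so $\|g_{i}^{(\ell)}\|_{\infty}\le c_{0}^{\,N-1}\|f\|_{C^{\ell}}$ is bounded independently of the other indices. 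A quantitative (constant–tracked) form of Lemma~\ref{lem:p} applied to the single–variable function $g_{i}\in C^{\ell}([-1,1])$ then yields $|\alpha_{\vec k}|\,(1+k_{i})^{\ell}\le C\|g_{i}^{(\ell)}\|_{\infty}\le C'\|f\|_{C^{\ell}}$, which is the desired uniform bound since $k_{i}=\max_{j}k_{j}$.

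With this bound in hand, the remaining step is a shell count. Grouping the indices by $m:=\max_{j}k_{j}$, the number of $\vec k$ in a shell is at most $Nm^{N-1}$, and each summand is bounded by $C m^{-\ell}\prod_{j}\sqrt{k_{j}+\tfrac12}\le C m^{-\ell}(1+m)^{N/2}$. Hence the $m$-th shell contributes $\lesssim m^{\,3N/2-1-\ell}$, and $\sum_{m}m^{\,3N/2-1-\ell}$ converges precisely because $\ell>\tfrac32 N$. This simultaneously establishes the absolute convergence of the series and shows that its tail over $\|\vec k\|_{\infty}>M$ tends to $0$, giving $\|f-f^{M}\|_{\infty}\to0$.

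I expect the main obstacle to be the second step: promoting the pointwise/limiting statements of Lemmas~\ref{lem:p} and~\ref{lem:p2} to an estimate uniform in $\vec k$. The whole argument collapses unless the constant in the single–direction decay is controlled independently of the remaining indices, and that uniformity rests entirely on the bound $\|P_{n}\|_{L^{1}}\le c_{0}$ (controlling $\|g_{i}^{(\ell)}\|_{\infty}$) together with a constant–tracked version of Lemma~\ref{lem:p}. The exponent $\tfrac32 N$ then emerges sharply from balancing the shell cardinality $m^{N-1}$ against the sup–norm growth $\prod_{j}\sqrt{k_{j}+\tfrac12}\le(1+m)^{N/2}$, i.e.\ the extra $\tfrac{N}{2}$ beyond the leading $N$.
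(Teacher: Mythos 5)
Your proof is correct, and its skeleton --- grouping indices into shells $\{\vec{k}:\|\vec{k}\|_{\infty}=\lambda\}$, using $\|P_{\vec{k}}\|_{\infty}\le(\lambda+\tfrac{1}{2})^{N/2}$, counting $O(\lambda^{N-1})$ indices per shell, and balancing exponents so that $\ell>\tfrac{3}{2}N$ gives a convergent sum --- coincides with the paper's. The genuine difference lies in how the coefficient decay is justified, and there your route is stronger. The paper deduces from its Lemma~\ref{lem:p2} (decay along a single ray $\vec{k}+\lambda\vec{e}_{i}$ with the base $\vec{k}$ fixed) that $|\alpha_{\vec{k}}|=o(\lambda^{-\ell})$, $\lambda=\|\vec{k}\|_{\infty}$, uniformly ``for $\lambda$ large enough''; this uniformity over a shell does not actually follow from the ray-wise limit statement, since the implied rate may depend on the ray and the shells meet infinitely many distinct rays. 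That is exactly the obstacle you flagged, and your replacement --- the uniform bound $|\alpha_{\vec{k}}|\,(1+\max_{j}k_{j})^{\ell}\le C\|f\|_{C^{\ell}}$, obtained from the uniform estimate $\|P_{n}\|_{L^{1}}\le\sqrt{2}$ (Cauchy--Schwarz against the $L^{2}$ normalization) together with a constant-tracked form of Lemma~\ref{lem:p} --- supplies precisely the missing uniformity, so your argument is rigorous where the paper's is loose. Two minor repairs: the quantitative one-variable estimate (iterating the Legendre operator $Lg=((1-x^{2})g')'$ and integrating by parts) needs bounds on all derivatives of $g_{i}$ up to order $\ell$, not only on $g_{i}^{(\ell)}$; your own computation delivers these, since $\|g_{i}^{(j)}\|_{\infty}\le c_{0}^{N-1}\|\partial_{i}^{j}f\|_{\infty}$ for every $j\le\ell$, so the constant is still controlled by $\|f\|_{C^{\ell}}$ alone. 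Also, the shell $\{\vec{k}:\max_{j}k_{j}=m\}$ has $(m+1)^{N}-m^{N}$ elements, which is $O(m^{N-1})$ but not literally at most $Nm^{N-1}$; this changes nothing in the exponent count.
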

\begin{proof}
	From the proof of Lemma \ref{lem:p1}, it is enough to show that $\{f^{M}\}$ is a Cauchy sequence with respect to $L^{\infty}$-norm. For $m>n$, we have
	\begin{equation*}
	\begin{split}
	\|f^{m}-f^{n}\|_{\infty} &\leq \sum_{\lambda=n+1}^{m} \sum_{\{\vec{k};\|\vec{k}\|_{\infty}=\lambda\}}|\alpha_{\vec{k}}|\|P_{\vec{k}}\|_{\infty} 
	 \leq  \sum_{\lambda=n+1}^{m} \left(\lambda+\frac{1}{2}\right)^{\frac{N}{2}}\sum_{\{\vec{k};\|\vec{k}\|_{\infty}=\lambda\}}|\alpha_{\vec{k}}| ,
	\end{split}
	\end{equation*}
	where we have used the fact that for $\|\vec{k}\|_{\infty}=\lambda$,
	\begin{equation*}
	\begin{split}
	\|P_{\vec{k}}\|_{\infty}&= \left\|\prod_{i=1}^{N} P_{k_i}\right\|_{\infty} 
	=\prod_{i=1}^{N}\left\| P_{k_i}\right\|_{\infty} 
	\leq \prod_{i=1}^{N} \sqrt{k_i+\frac{1}{2}}\leq \left(\lambda+\frac{1}{2}\right)^{\frac{N}{2}}.
	\end{split}
	\end{equation*}
	Further notice the multi-index set $\{\vec{k}; \|\vec{k}\|_{\infty}=\lambda\}$ has total $(\lambda+1)^{N}-\lambda^{N}=O(\lambda^{N-1})$ elements, and by Lemma \ref{lem:p2} we know
	\begin{equation*}
	|\alpha_{\vec{k}}|=o(\lambda^{-\ell}), \quad \lambda=\|\vec{k}\|_{\infty},
	\end{equation*}
	for $\lambda$ large enough. Thus, for large enough $m>n$ we have
	\begin{equation*}
	\|f^{m}-f^{n}\|_{\infty} \leq C \sum_{\lambda=n+1}^{m} \left(\lambda+\frac{1}{2}\right)^{\frac{N}{2}} \lambda^{N-1} \lambda^{-\ell},
	\end{equation*}
	where $C$ is a constant only depends on $N$ and $\ell$. Finally, with $\ell >\frac{3}{2}N$, we know $\{f^{M}\}$ is indeed a Cauchy sequence w.r.t to $L^{\infty}$-norm, which completes the proof.
\end{proof}
To generalize Lemma \ref{thm:p2}, it is enough to consider the partial derivative case, that is, finding the condition for $\|f^{M}_{x_i}-f_{x_i}\|_{\infty}\rightarrow 0$ as $n\rightarrow +\infty$.
\begin{lem}\label{lem:p5}
	For $f\in C^{\ell}([-1,1]^{N}) $ with $\ell>\frac{3}{2}N+2$, consider its least-squares approximation $f^{M}$ given by \eqref{expansion}. Then we have
	\begin{equation*}
	\lim_{M\rightarrow +\infty} \left\|\frac{\partial}{\partial {x_i}}(f^{M}-f) \right\|_{\infty}=0, \quad i=1,2,\dots,N.
	\end{equation*}
\end{lem}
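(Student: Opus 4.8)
The plan is to adapt the Cauchy-sequence argument from the proof of Lemma~\ref{lem:p3} to the partial-derivative setting, in exactly the way that Lemma~\ref{thm:p2} adapted Lemma~\ref{lem:p1} in the one-dimensional case. First I would fix an index $i$ and write
\begin{equation*}
\frac{\partial}{\partial x_i} f^{M}(\theta) = \sum_{\|\vec{k}\|_{\infty}\leq M} \alpha_{\vec{k}}\,\frac{\partial}{\partial x_i} P_{\vec{k}}(\theta),
\end{equation*}
with the goal of showing that $\{\partial_{x_i} f^{M}\}$ is a Cauchy sequence in $L^{\infty}$. The tensor-product structure $P_{\vec{k}}(\theta)=\prod_{j=1}^{N}P_{k_j}(\theta_j)$ gives $\partial_{x_i}P_{\vec{k}} = P'_{k_i}(\theta_i)\prod_{j\neq i}P_{k_j}(\theta_j)$, so the whole estimate reduces to controlling this single factored quantity in $L^{\infty}$.

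The key step is the bound on $\|\partial_{x_i} P_{\vec{k}}\|_{\infty}$. Combining the estimate $\|P_{k_j}\|_{\infty}\leq\sqrt{k_j+\tfrac{1}{2}}$ already used in Lemma~\ref{lem:p3} with the derivative bound \eqref{eq:pbound}, namely $\|P'_{k_i}\|_{\infty}\leq\tfrac{1}{2}k_i(k_i+1)\sqrt{k_i+\tfrac{1}{2}}$, I would obtain, for any multi-index with $\|\vec{k}\|_{\infty}=\lambda$,
\begin{equation*}
\left\|\frac{\partial}{\partial x_i} P_{\vec{k}}\right\|_{\infty} \leq \frac{1}{2}\,\lambda(\lambda+1)\left(\lambda+\frac{1}{2}\right)^{\frac{N}{2}} = O\!\left(\lambda^{2+\frac{N}{2}}\right).
\end{equation*}
Then, exactly as in Lemma~\ref{lem:p3}, I would use that there are $O(\lambda^{N-1})$ multi-indices with $\|\vec{k}\|_{\infty}=\lambda$ and that $|\alpha_{\vec{k}}|=o(\lambda^{-\ell})$ by Lemma~\ref{lem:p2}, giving for large $m>n$
\begin{equation*}
\left\|\frac{\partial}{\partial x_i}(f^{m}-f^{n})\right\|_{\infty} \leq C\sum_{\lambda=n+1}^{m} \lambda^{2+\frac{N}{2}}\,\lambda^{N-1}\,\lambda^{-\ell} = C\sum_{\lambda=n+1}^{m}\lambda^{\,1+\frac{3N}{2}-\ell}.
\end{equation*}
This tail converges precisely when $1+\tfrac{3N}{2}-\ell<-1$, i.e. when $\ell>\tfrac{3N}{2}+2$, which is exactly the hypothesis; hence $\{\partial_{x_i}f^{M}\}$ is Cauchy and converges uniformly to some $\varphi_i\in C^{0}([-1,1]^{N})$. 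I expect this exponent bookkeeping --- extracting the sharp power of $\lambda$ from the product rule and matching it against the regularity threshold --- to be the main (though still routine) obstacle, since everything else parallels the already-proved lemmas.

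It then remains to identify $\varphi_i$ with $\partial_{x_i} f$, which proceeds exactly as in the concluding part of Lemma~\ref{thm:p2}. Holding the remaining coordinates fixed and integrating in the $i$-th variable,
\begin{equation*}
f^{M}(\dots,\theta_i,\dots) - f^{M}(\dots,-1,\dots) = \int_{-1}^{\theta_i} \frac{\partial}{\partial x_i} f^{M}(\dots,t,\dots)\,\td t.
\end{equation*}
Letting $M\to\infty$, the left-hand side converges to $f(\dots,\theta_i,\dots)-f(\dots,-1,\dots)$ by the uniform convergence $f^{M}\to f$ established in Lemma~\ref{lem:p3}, while the uniform convergence $\partial_{x_i}f^{M}\to\varphi_i$ permits passing the limit inside the integral on the right. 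This yields $f(\dots,\theta_i,\dots)-f(\dots,-1,\dots)=\int_{-1}^{\theta_i}\varphi_i\,\td t$, and differentiating in $\theta_i$ gives $\partial_{x_i}f=\varphi_i$, completing the proof for each $i=1,\dots,N$.
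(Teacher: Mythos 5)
Your proposal is correct and follows essentially the same route as the paper's own proof: the same Cauchy-sequence argument with the bound $\bigl\|\partial_{x_i}P_{\vec{k}}\bigr\|_{\infty}\leq \tfrac{1}{2}\lambda(\lambda+1)\bigl(\lambda+\tfrac{1}{2}\bigr)^{N/2}$ from \eqref{eq:pbound}, the $O(\lambda^{N-1})$ count of multi-indices per shell, the coefficient decay from Lemma~\ref{lem:p2}, and the resulting threshold $\ell>\tfrac{3}{2}N+2$. Your concluding identification of the limit with $\partial_{x_i}f$ by integrating in the $i$-th coordinate is exactly the ``same trick'' the paper invokes from Lemma~\ref{thm:p2}, merely written out in full.
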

\begin{proof}
	Similar to the proof of Lemma~\ref{lem:p3}, we first show that $\{f^{M}_{x_i}\}$ is a Cauchy sequence. For $m>n$, we have
  \begin{equation*}
	\begin{split}
	\|f_{x_i}^{m}-f_{x_i}^{n}\|_{\infty} &\leq \sum_{\lambda=n+1}^{m} \sum_{\{\vec{k};\|\vec{k}\|_{\infty}=\lambda\}}|\alpha_{\vec{k}}| \left\|\frac{\partial}{\partial_{x_i}}P_{\vec{k}}\right\|_{\infty} \\
	&\leq  \sum_{\lambda=n+1}^{m} \frac{1}{2}\lambda (\lambda+1) \left(\lambda+\frac{1}{2}\right)^{\frac{N}{2}}\sum_{\{\vec{k};\|\vec{k}\|_{\infty}=\lambda\}}|\alpha_{\vec{k}}| ,
	\end{split}
	\end{equation*}
	where we have used the inequality \eqref{eq:pbound}. By Lemma \ref{lem:p2}, similar to the proof of Lemma \ref{lem:p3}, we know $f\in C^{\ell}([-1,1])$ provides
	\begin{equation*}
	\|f_{x_i}^{m}-f_{x_i}^{n}\|_{\infty} \leq C \sum_{\lambda=n+1}^{m} \frac{1}{2}\lambda(\lambda+1) \left(\lambda+\frac{1}{2}\right)^{\frac{N}{2}} \lambda^{N-1} \lambda^{-\ell},
	\end{equation*}
	where $C$ is a constant only depends on $N$ and $\ell$. Finally, with $\ell>\frac{3}{2}N+2$, we able able to conclude that $\{f_{x_i}^{M}\}$ is a Cauchy sequence with respect to $L^{\infty}$-norm. With the same trick  in the proof of Lemma \ref{thm:p2}, one can show that $f_{x_i}^{M}$ indeed converges to $f_{x_i}$ uniformly on $[-1,1]$.
\end{proof}
Now we are ready to prove theorem \ref{thm:convN}.
\begin{proof}
	First, by Lemma \ref{lem:p3} and \ref{lem:p5}, the regularity assumption simply provides
	\begin{equation*}
	\lim_{M \rightarrow +\infty} \|\textbf{f}^{M}-\textbf{f}\|_{\infty}=0, \quad \lim_{M \rightarrow +\infty} \|\nabla \textbf{f}^{M}-\nabla\textbf{f}\|_{\infty}=0.
		\end{equation*}
	To construct a similar function $F^{M}$ used in the proof of proposition \ref{thm:p3}, we introduce the notation
	\begin{equation*}
	\min_{\theta\in [-1,1]^{N}} \sum_{i=1}^{N} (f_{i}^{M})^{2}(\theta)=: \sum_{i=1}^{N} (e^{M}_{i})^{2}, \quad \textbf{e}^{M}:= (e^{M}_{1}, e^{M}_{2},\dots, e^{M}_{N})^{\top},
	\end{equation*}
	and define $F^{M}:= \textbf{f}^{M}-\textbf{e}^{M}$. First, we show the existence of a sequence of solutions, $\{\theta_M^*\}$, that are close to $\theta^\dagger$ by checking the hypothesis of Lemma \ref{lem:p4} to $F^{M}$ at $\theta^{\dagger}$. Since $\nabla \textbf{f}(\theta^{\dagger})$ is invertible, by continuity, for some $r>0$ small enough, there exists a positive constants $\gamma$ such that
	\begin{equation*}
	\| \nabla \textbf{f}(\theta)^{-1} \| \leq \frac{\gamma}{2}, \quad \forall \theta \in B(\theta^{\dagger}, r).
	\end{equation*}
	Later, we will specify $r$ for the convergence of $\theta^*_M\to\theta^\dagger$ as $M\to \infty$.

	Further notice that $\nabla F^{M}= \nabla \textbf{f}^{M}$ and $\nabla \textbf{f}^{M}$ converges to $\nabla\textbf{f}$ uniformly. This implies that  there exists $L_1$ such that $\forall M>L_1$
	\begin{equation}\label{eq:condN1}
	\|\nabla F^{M} (\theta)^{-1} \|\leq \gamma, \quad \forall \theta \in B(\theta^{\dagger}, r).
	\end{equation}
	By the definition of $\textbf{e}^{M}$, we know at $\theta=\theta^{\dagger}$
	\begin{equation*}
	\|F^{M}(\theta^{\dagger})\| = \|\textbf{f}^{M}(\theta^{\dagger})-\textbf{e}^{M}\| \leq 2\|\textbf{f}^{M}(\theta^{\dagger})\|,
	\end{equation*}
	which leads to
	\begin{equation*}
	\lim_{M \rightarrow +\infty} 	\|F^{M}(\theta^{\dagger})\| \leq \lim_{M \rightarrow +\infty} 2\|\textbf{f}^{M}(\theta^{\dagger})\| = 2\|\textbf{f}(\theta^{\dagger})\|=0.
	\end{equation*}
	Thus, there exists $L_2$ such that $\forall M>L_2$
	\begin{equation}\label{eq:condN2}
	\|F^{M}(\theta)\| <\frac{r}{\gamma}.
	\end{equation}
	Let $M>\max\{L_1, L_2\}=:L$, and combine conditions \eqref{eq:condN1} and \eqref{eq:condN2}, by Lemma \ref{lem:p4} to $F^{M}$ at $\theta^{\dagger}$, we conclude that $F^{M}(\theta)=0$ has a solution in $B(\theta^{\dagger}, r)$. Denote such solution as $\theta^{*}_{M}$ $\forall M>L$, which is a minimizer of the approximated least-squares problem \eqref{poly_ls}.
	
	Our next task is to show that  $\theta^{*}_{M}$ converges to $\theta^{\dagger}$. Notice that in multi-dimensional case, we do not have the mean-value theorem. As a remedy, we introduce the following matrix-valued function
	\begin{equation*}
	J(\theta; \theta^{\dagger}):= \int^{1}_{0} \nabla \textbf{f}(\theta^{\dagger}+t(\theta-\theta^{\dagger})) \td t.
	\end{equation*}
	From the definition, we know that $J(\theta^{\dagger};\theta^{\dagger})=\nabla \textbf{f}(\theta^{\dagger})$ which is invertible and
	\begin{equation}\label{eq:JN}
	\begin{split}
	\textbf{f}(\theta)-\textbf{f}(\theta^{\dagger})&= \int_{0}^{1}\nabla \textbf{f}(\theta^{\dagger}+t(\theta-\theta^{\dagger})) (\theta-\theta^{\dagger})\td t \\
	&=J(\theta;\theta^{\dagger})(\theta-\theta^{\dagger}).
	\end{split}
	\end{equation}
	Since $J(\theta; \theta^{\dagger})^{-1}$ exists at $\theta=\theta^{\dagger}$ and it is a continuous function of $\theta$ close to $\theta^{\dagger}$, we are able to select $r>0$ (the same $r$ in \eqref{eq:condN1} and \eqref{eq:condN2}) such that $J(\theta; \theta^{\dagger})^{-1}$ exists and 
	\begin{equation*}
	\|J(\theta; \theta^{\dagger})^{-1}\|< \Delta, \quad \forall \theta\in O(\theta^{\dagger},r), 
	\end{equation*}
	for some positive constant $\Delta$. Let $\theta=\theta^{*}_{M}$ in \eqref{eq:JN}, and we obtain
	\begin{equation*}
	\begin{split}
	\|\theta^{*}_{M}-\theta^{\dagger}\| &\leq 	\|J(\theta^{*}_{M}; \theta^{\dagger})^{-1}\| \|\textbf{f}(\theta^{*}_{M})-\textbf{f}(\theta^{\dagger})\| < \Delta \|\textbf{f}(\theta^{*}_{M})\| \\
	& \leq \Delta \left(\|\textbf{f}(\theta^{*}_{M})-\textbf{f}^{M}(\theta^{*}_{M})\| +\|\textbf{f}^{M}(\theta^{*}_{M})\|\right) \leq \Delta \left(\|\textbf{f}-\textbf{f}^{M}\|_{\infty}+\|\textbf{f}^{M}(\theta^{\dagger}\|\right).
	\end{split}
	\end{equation*}
	Thus, 
	\begin{equation*}
	\lim_{M\rightarrow +\infty} 	\|\theta^{*}_{M}-\theta^{\dagger}\| \leq \lim_{M \rightarrow +\infty} \Delta \left(\|\textbf{f}-\textbf{f}^{M}\|_{\infty}+\|\textbf{f}^{M}(\theta^{\dagger})\|\right) =0.
	\end{equation*}
	As for the residual error $\textbf{f}^{M}(\theta^{*}_{M})$, we know
	\begin{equation*}
	\lim_{M\rightarrow +\infty} \|\textbf{f}^{M}(\theta^{*}_{M}) \|\leq \lim_{M \rightarrow +\infty} \|f^{M}(\theta^{\dagger})\|=0.
	\end{equation*}
This concludes the proof.
\end{proof}

\section{Proof of Theorem \ref{full_rank}}\label{appB}
The full rank condition of the Jacobian matrix is usually required in the local convergence theorem of Newton-like methods. We are going to discuss this issue over a finite dimensional function space $\Gamma^{M}_{N}$ defined by
\begin{equation*}
\Gamma^{M}_{N}:=\spn\{\theta_{1}^{k_{1}}\theta_{2}^{k_{2}}\cdots \theta_{N}^{k_{N}}\;\big|\; k_{i}=0,1,\dots,M\}.
\end{equation*}
Since the zero element in $\Gamma^{M}_{N}$ is the function is identically zero, we can introduce the following definition of linear independence.
\begin{defn}\label{def1}
	$\{f_{1},\dots,f_{n}\} \subset \Gamma^{M}_{N}$ is said to be a linearly independent set if 
	\begin{equation*}
	\sum_{i=1}^{n} c_{i}f_{i}\equiv 0 \Rightarrow c_{i}=0,\;\; \forall i=1,\dots ,n.
	\end{equation*}
\end{defn}
Define the differential operator $D_{\theta_{i}}:\Gamma^{M}_{N}\longrightarrow \Gamma^{M}_{N}$ as follows,
\begin{equation}
D_{\theta_{i}}f:=\frac{\partial}{\partial \theta_{i}}f, \quad i=1,\dots N.\nonumber
\end{equation}
Now let's deduce some relevant properties of such operator.
\begin{lem}
	The operators $\{D_{\theta_{i}}\}_{i=1}^{N}$ satisfy 
	\begin{enumerate}
		\item 
		$\rank(D_{\theta_{i}})=M(M+1)^{N-1}$;
		\item
		$\forall i\not=j$, there exists a permutation $P_{ij}$ satisfying $D_{\theta_{j}}=P_{ij}^{\top}D_{\theta_{i}}P_{ij}$.
	\end{enumerate}
\end{lem}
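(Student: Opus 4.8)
The plan is to work entirely in the monomial basis $\{\theta_1^{k_1}\cdots\theta_N^{k_N} : 0\le k_i\le M\}$ of $\Gamma^{M}_{N}$, which has $(M+1)^{N}$ elements, and to realize each $D_{\theta_i}$ as a matrix in this basis. The first point I would record is that $D_{\theta_i}$ is a well-defined endomorphism of $\Gamma^{M}_{N}$: differentiating a monomial with respect to $\theta_i$ lowers its $\theta_i$-degree by one, so the image stays within the prescribed degree bounds and no element of $\Gamma^{M}_{N}$ escapes the space.

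For item 1, I would compute the kernel directly. On the monomial basis, $D_{\theta_i}$ sends $\theta_1^{k_1}\cdots\theta_N^{k_N}$ to $k_i\,\theta_1^{k_1}\cdots\theta_i^{k_i-1}\cdots\theta_N^{k_N}$, which vanishes precisely when $k_i=0$. Hence $\ker D_{\theta_i}$ is spanned by those monomials with $k_i=0$, of which there are $(M+1)^{N-1}$ (the remaining $N-1$ exponents range freely over $\{0,\dots,M\}$). The rank--nullity theorem then gives $\rank(D_{\theta_i})=(M+1)^{N}-(M+1)^{N-1}=M(M+1)^{N-1}$, as claimed. This step is routine.

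The substantive part is item 2, the conjugacy of $D_{\theta_i}$ and $D_{\theta_j}$. My approach is to introduce the variable-swap map $\phi$ defined on functions by $(\phi f)(\theta)=f(\tau_{ij}\theta)$, where $\tau_{ij}$ interchanges the coordinates $\theta_i$ and $\theta_j$. In the monomial basis $\phi$ merely interchanges the exponents $k_i$ and $k_j$, so it is represented by a permutation matrix $P_{ij}$, and since this permutation is an involution we have $P_{ij}^{\top}=P_{ij}=P_{ij}^{-1}$. The key identity to establish is the \emph{intertwining relation} $D_{\theta_j}\circ\phi=\phi\circ D_{\theta_i}$, which follows from a one-line chain-rule computation: writing $\eta=\tau_{ij}\theta$, only the slot $\eta_i=\theta_j$ depends on $\theta_j$, so $\partial_{\theta_j}\bigl(f(\eta)\bigr)=(\partial_{\eta_i}f)(\eta)=\phi(D_{\theta_i}f)(\theta)$. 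Rearranging this identity yields $D_{\theta_j}=P_{ij}^{\top}D_{\theta_i}P_{ij}$, exactly the asserted form once $P_{ij}^{-1}=P_{ij}^{\top}$ is invoked.

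The only point requiring care — and the closest thing to an obstacle — is the bookkeeping of the transpose and inverse conventions, so that the result comes out in the precise form $D_{\theta_j}=P_{ij}^{\top}D_{\theta_i}P_{ij}$ rather than $P_{ij}D_{\theta_i}P_{ij}^{\top}$; this is harmless because $P_{ij}$ is a symmetric involution, but it should be stated explicitly. I would also verify that $\phi$ genuinely permutes the chosen ordered basis (not merely permuting the variables abstractly), so that $P_{ij}$ is literally a permutation matrix and the orthogonality $P_{ij}^{\top}=P_{ij}^{-1}$ is legitimately available.
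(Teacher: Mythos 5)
Your proof is correct and takes essentially the same route as the paper: the kernel of $D_{\theta_i}$ is spanned by the monomials with $k_i=0$, giving the rank via rank--nullity, and the conjugating matrix $P_{ij}$ is exactly the basis permutation induced by swapping the exponents $k_i$ and $k_j$. The only difference is presentational — you verify the intertwining relation $D_{\theta_j}\circ\phi=\phi\circ D_{\theta_i}$ by an explicit chain-rule computation and justify $P_{ij}^{\top}=P_{ij}=P_{ij}^{-1}$, details the paper leaves implicit, which is a welcome tightening rather than a departure.
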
 
\begin{proof}
	We first prove $\rank(D_{\theta_{1}})=M(M+1)^{N-1}$. It is enough to show that the dimension of the null space of $D_{\theta_{1}}$ is $(M+1)^{N-1}$, that is, $\dim(\ker(D_{\theta_{1}}))=(M+1)^{N-1}$. With the help of multi-indices, one  can see  that $\{ \theta^{\vec{k}} \;| \; \|\vec{k}\|_{\infty}\leq M \}$ provides a natural basis for $\Gamma^{M}_{N}$, and
	\begin{equation*}
	D_{\theta_{1}}( \theta^{\vec{k}}) \equiv 0 \Leftrightarrow k_{1}=0,
	\end{equation*} 
	which yields $\dim(\ker (D_{\theta_{1}}))=(M+1)^{N-1}$. Further notice that any 2-cycle $(i,j)$ in the symmetric group $S_{N}$ induces a permutation $P_{ij}$ over the basis,
	\begin{equation*}
	P_{ij}: \cdots \theta_{i}^{k_{i}}\cdots \theta_{j}^{k_{j}}\cdots \rightarrow \cdots \theta_{j}^{k_{j}}\cdots \theta_{i}^{k_{i}}\cdots.
	\end{equation*}
	This permutation leads to the identity $D_{\theta_{j}}=P_{ij}^{\top}D_{\theta_{i}}P_{ij}$, which implies that $\rank(D_{\theta_{i}})=\rank(D_{\theta_{1}})=M(M+1)^{N-1}$ for $i=1,\dots,N$.
\end{proof}
The following Lemma is will also be useful to prove our main result.
\begin{lem} \label{Lemma_appendix}
	Let $\sum_{i=1}^{N} \lambda_{i}D_{\theta_{i}}$ be a non-trivial linear combination of $\{D_{\theta_{i}}\}^{N}_{i=1}$, then,
	\begin{equation*}
	\rank\left(\sum_{i=1}^{N} \lambda_{i}D_{\theta_{i}}\right)\geq M(M+1)^{N-1}.
	\end{equation*}
\end{lem}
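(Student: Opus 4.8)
The plan is to convert the rank bound into a statement about the nullity. Since $\Gamma^{M}_{N}$ has dimension $(M+1)^{N}$, the rank--nullity identity gives $\rank(L) = (M+1)^{N} - \dim\ker(L)$ for $L := \sum_{i=1}^{N}\lambda_{i}D_{\theta_{i}}$, so the target inequality $\rank(L)\geq M(M+1)^{N-1}$ is equivalent to
\[
\dim\ker(L)\leq (M+1)^{N-1}.
\]
This reformulation is what I would aim to prove.

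First I would normalize the coefficients. Since the combination is non-trivial, some $\lambda_{i}\neq 0$; using the permutation $P_{1i}$ from the preceding lemma, the conjugation $P_{1i}^{\top}L\,P_{1i}$ sends each $D_{\theta_{j}}$ to $D_{\theta_{\sigma(j)}}$ with $\sigma=(1\,i)$, hence merely permutes the $\lambda_{j}$'s while preserving rank (as $P_{1i}$ is invertible). Thus I may assume $\lambda_{1}\neq 0$. The key structural observation is then that once $\lambda_{1}\neq 0$, $L$ is triangular with respect to the grading by powers of $\theta_{1}$. Concretely, write each $p\in\Gamma^{M}_{N}$ uniquely as $p=\sum_{r=0}^{M}\theta_{1}^{r}q_{r}$ with $q_{r}\in\Gamma^{M}_{N-1}$ (polynomials in $\theta_{2},\dots,\theta_{N}$, each degree at most $M$), and set $\tilde{L}:=\sum_{i\geq 2}\lambda_{i}D_{\theta_{i}}$, which maps $\Gamma^{M}_{N-1}$ into itself. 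A direct expansion of $L=\lambda_{1}D_{\theta_{1}}+\tilde{L}$ gives the coefficients
\[
[\theta_{1}^{s}]\,Lp=\lambda_{1}(s+1)q_{s+1}+\tilde{L}q_{s}\;\;(0\leq s\leq M-1),\qquad [\theta_{1}^{M}]\,Lp=\tilde{L}q_{M}.
\]

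Imposing $Lp=0$ then forces the recurrence $q_{s+1}=-\tfrac{1}{\lambda_{1}(s+1)}\tilde{L}q_{s}$ for $0\leq s\leq M-1$, so that $q_{1},\dots,q_{M}$ are all determined by $q_{0}$ (explicitly $q_{s}=\tfrac{(-1)^{s}}{\lambda_{1}^{s}s!}\tilde{L}^{s}q_{0}$). Consequently the linear map $\ker L\to\Gamma^{M}_{N-1}$, $p\mapsto q_{0}$, is injective: if $q_{0}=0$ then the recurrence yields $q_{s}=0$ for all $s$, whence $p=0$. Therefore $\dim\ker(L)\leq\dim\Gamma^{M}_{N-1}=(M+1)^{N-1}$, and the arithmetic $\rank(L)=(M+1)^{N}-\dim\ker(L)\geq(M+1)^{N}-(M+1)^{N-1}=M(M+1)^{N-1}$ finishes the argument. (Note the top constraint $\tilde{L}q_{M}=0$ only shrinks the kernel further and is not needed for the inequality.)

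I do not expect a serious obstacle here; the substance is entirely in recognizing the triangular structure with respect to $\theta_{1}$ once $\lambda_{1}\neq 0$, which reduces the infinite-looking kernel condition to a single determining datum $q_{0}$. The parts that require care but are routine are the coefficient bookkeeping in the displayed expansion and verifying that $\tilde{L}$ preserves $\Gamma^{M}_{N-1}$ (immediate, since $\tilde{L}$ involves no $\partial_{\theta_{1}}$ and lowers a $\theta_{i}$-degree, $i\geq 2$). It is worth remarking that for $\tilde{L}=0$ (i.e.\ $L=\lambda_{1}D_{\theta_{1}}$) the bound is attained with equality, recovering $\rank(D_{\theta_{1}})=M(M+1)^{N-1}$ from the previous lemma, which is a useful consistency check on the method.
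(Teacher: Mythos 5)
Your proof is correct, and its skeleton---rank--nullity on $\Gamma^{M}_{N}$ followed by the bound $\dim\ker(L)\leq (M+1)^{N-1}$, obtained by singling out a variable whose coefficient $\lambda_j$ is nonzero---is the same as the paper's. The decisive step, however, is executed differently, and your version is the rigorous one. The paper argues that since $\Lambda(\theta^{\vec{k}})\neq 0$ whenever $k_{j}\neq 0$, one gets $\ker(\Lambda)\subset \spn\{\theta^{\vec{k}}\;:\;k_{j}=0\}$; this inference is invalid (a kernel element can be a cancelling combination of monomials with $k_{j}\neq 0$), and the claimed containment is in fact false: for $N=2$, $M=1$, $\Lambda=D_{\theta_{1}}+D_{\theta_{2}}$, the polynomial $\theta_{1}-\theta_{2}$ lies in $\ker(\Lambda)$ but not in $\spn\{1,\theta_{2}\}$. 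What is true---and what your argument establishes---is that a kernel element is determined by its $\theta_{1}$-degree-zero component: your triangular recurrence $q_{s+1}=-\tfrac{1}{\lambda_{1}(s+1)}\tilde{L}q_{s}$ shows that the projection $p\mapsto q_{0}$ is injective on $\ker L$, giving $\dim\ker(L)\leq \dim \Gamma^{M}_{N-1}=(M+1)^{N-1}$ without any containment claim. So your proposal not only follows the paper's strategy but repairs a genuine gap in its execution; note that the lemma's conclusion, and hence Theorem~\ref{full_rank} which relies on it, remains valid because only the dimension bound (not the false containment) is ever used. Your equality check for $\tilde{L}=0$, recovering $\rank(D_{\theta_{1}})=M(M+1)^{N-1}$, is also consistent with the paper's preceding lemma.
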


\begin{proof}
	Let $\Lambda=\sum_{i=1}^{N} \lambda_{i}D_{\theta_{i}}$ be a non-trivial linear combination of $\{D_{\theta_{i}}\}^{N}_{i=1}$, i.e., $\exists \lambda_{j}\not =0$. Since $\Lambda$ is also a linear operator over $\Gamma^{M}_{N}$, we have
	\begin{equation}
	\begin{split}
	\rank(\Lambda)&=\dim(\ker (\Lambda^{*})^{\perp})=(M+1)^{N}-\dim(\ker{\Lambda^{*}}) \\
	&=(M+1)^{N}-\dim(\ker(\Lambda)). \nonumber \end{split}
	\end{equation}
	Notice the image of $\theta^{\vec{k}}$ can be decomposed into
	\begin{equation*}
	\begin{split}
	\Lambda(\theta^{\vec{k}}) &=\lambda_{j}D_{\theta_{j}}(\theta_{1}^{k_{1}}\cdots\theta_{j}^{k_{j}}\cdots \theta_{N}^{k_{N}})+\sum_{i\not =j}\lambda_{i}D_{\theta_{i}}(\theta_{1}^{k_{1}}\cdots\theta_{j}^{k_{j}}\cdots \theta_{N}^{k_{N}}) \\
	&= \lambda_{j}k_{j} \theta_{1}^{k_{1}}\cdots\theta_{j}^{k_{j}-1}\cdots \theta_{N}^{k_{N}}+ \theta_{j}^{k_{j}}R(\theta_{1},\cdots,\theta_{j-1},\theta_{j+1},\cdots, \theta_{N}),
	\end{split}
	\end{equation*}
	where $R:=\sum_{i\not =j}\lambda_{i}D_{\theta_{i}}(\theta_{1}^{k_{1}}\cdots\theta_{j-1}^{k_{j-1}}\theta_{j+1}^{k_{j+1}}\cdots \theta_{N}^{k_{N}})$ is a polynomial independent with $\theta_{j}$. It is easy to see that if $k_{j}\not=0$, the image cannot be identically zero. This implies $\ker (\Lambda)\subset \spn\{\theta^{\vec{k}},k_{j}=0\}$, thus $\rank(\Lambda)\geq M(M+1)^{N-1}$.
\end{proof}

To gain intuitions, one can consider the following example with $N=2$. One can select the following ordered basis for $\Gamma^{M}_{2}$
\begin{equation*}
\begin{split}
\mathfrak{B}:=&\{1,\;\theta_{2},\; \theta_{2}^{2},\;\dots,\; \theta_{2}^{M},\;\;\;\theta_{1},\;\theta_{1}\theta_{2},\;\dots, 
;\theta_{1}\theta_{2}^{M},\\
&\;\;\;\theta_{1}^{2},\;\cdots,\;\theta_{1}^{M}\theta_{2}^{M}\}.
\end{split}
\end{equation*}
We can derive the matrix representation of $D_{\theta_{1}}$ and $D_{\theta_{2}}$
\begin{equation*}
D_{\theta_{1}}=\begin{pmatrix}
0 & & & & \\
I & 0 & & &\\
& 2I & \ddots & & \\
& & \ddots & \ddots & \\
& & & MI & 0
\end{pmatrix}, \quad D_{\theta_{2}}=\begin{pmatrix}
E & & & & \\
& E & & &\\
&  & \ddots & & \\
& &  & \ddots & \\
& & &  & E
\end{pmatrix},
\end{equation*}
where $I\in \bb{R}^{M+1\times M+1}$ is the identity matrix and $E\in \bb{R}^{M+1\times M+1}$ is defined as follows,
\begin{equation*}
E:=\begin{pmatrix}
0 & & & & \\
1 & 0 & & &\\
& 2 & \ddots & & \\
& & \ddots & \ddots & \\
& & & M & 0
\end{pmatrix}.
\end{equation*}
The linear combination $D_{\theta_{1}}+\lambda D_{\theta_{2}}$ satisfies
\begin{equation*}
D_{\theta_{1}}+\lambda D_{\theta_{2}}=\begin{pmatrix}
\lambda E & & & & \\
I & \lambda E & & &\\
& 2I & \ddots & & \\
& & \ddots & \ddots & \\
& & & MI & \lambda E
\end{pmatrix}
\end{equation*}
and rank$(D_{\theta_{1}}+\lambda D_{\theta_{2}})\geq M(M+1)$ for $\forall \lambda\in \bb{R}$. 

We now turn to our discussion of the full rank condition of the Jacobian matrix,
\begin{equation}\label{jacob}
J(\theta):=\left(\frac{\partial f_{i}}{\partial \theta_{j}} \right)_{i=1,\dots, K,\; j=1,\dots, N}
\end{equation}
where $f_{i}\in \Gamma^{M}_{N}$ and $K\geq N$. Let $\textbf{f}$ be the column vector $(f_{1}(\theta),\dots, f_{K}(\theta))^{\top}$, then $\textbf{f}\in (\Gamma^{M}_{N})^{K}$ which is the $K-$time product space of $\Gamma^{M}_{N}$. We can rewrite the Jacobian matrix in the following way,
\begin{equation}
J(\theta)=(D_{\theta_{1}}\textbf{f},\dots,D_{\theta_{N}}\textbf{f}), \nonumber
\end{equation}
where $D_{\theta_{i}}\textbf{f}$ is defined by operating $D_{\theta_{i}}$ on each component of $\textbf{f}$. With above observation we are ready to prove Theorem~\ref{full_rank}.

\begin{proof}[Proof of Theorem~\ref{full_rank}]
	By the definition, we know that the column space of $J(\theta)$ is generated by 
	\begin{equation}
	\{D_{\theta_{1}}\textbf{f},\dots,D_{\theta_{N}}\textbf{f}\}\subset (\Gamma^{M}_{N})^{K}.\nonumber
	\end{equation}
	Consider a non-trivial linear combination of those vectors denoted by $\sum_{i=1}^{N}c_{i}D_{\theta_{i}}\textbf{f}$, then we have,
\begin{equation}
	\begin{split}
	\sum_{i=1}^{N}c_{i}D_{\theta_{i}}\textbf{f}\equiv0 
	&\Leftrightarrow \left(\sum_{i=1}^{N}c_{i}D_{\theta_{i}}\right)\textbf{f}\equiv 0\\
	& \Leftrightarrow \{f_{i}\}_{i=1}^{K}\subset \ker \left(\sum_{i=1}^{N}c_{i}D_{\theta_{i}}\right). \nonumber
	\end{split}
	\end{equation} 
	By Lemma \ref{Lemma_appendix} we know that the dimension of $\ker \left( \sum_{i=1}^{N}c_{i}D_{\theta_{i}}\right)$ is less or equal than $(M+1)^{N-1}$, while $\dim(\text{span}(F))> (M+1)^{N-1}$. Therefore the column space of $J(\theta)$ has full rank in $(\Gamma^{M}_{N})^{K}$. Furthermore, this implies that $p(\theta):=\det(J^{\top}(\theta)J(\theta))$ is not a zero polynomial. Since $N(J)$ corresponds with the root of $p(\theta)$, by standard results in algebra, we know that $\mathcal{N}(J)$ is a finite set for $N=1$ and a nowhere dense set for $N\geq 2$.
\end{proof}

{ 
\section{Proof of the Ergodicity of the Langevin Dynamics \eqref{Lan_sys}}\label{appC}

With a change of variables ($q = a(x-x_0)$, $p = a v$) we reduce \eqref{Lan_sys} into
\begin{equation}\label{Lan_sys_red}
\begin{cases}
&\dot{q} = p\\
&\dot{p} = -U_{0}'(q) -\gamma p + \sqrt{2\gamma k_{B}T} \dot{W},
\end{cases} \quad U_{0}(q) = \epsilon(e^{-2q}-2e^{-q}+0.01q^2).
\end{equation}
To verify the ergodicity of \eqref{Lan_sys_red}, we apply Theorem 3.2 in \cite{Mattingly:02} which requires the potential $U_0$ satisfying the following two conditions
\begin{enumerate}
	\item $U_{0}(q)\geq 0$ for all $q\in \mathbb{R}$.
	\item There exists an $\alpha>0$ and $\beta\in (0,1)$ such that 
\begin{equation}\label{eq:cond_2}
	\frac{1}{2} U'_{0}(q)q \geq \beta U_{0}(q) + \gamma^{2} \frac{\beta (2-\beta)}{8(1-\beta)}q^2 - \alpha.
\end{equation}
\end{enumerate}
For the first condition, simply notice that adding a constant term in $U_{0}$ does  not affect the dynamic or the equilibrium distribution. Replacing $U_{0}$ in \eqref{Lan_sys_red} by
\begin{equation}\label{eq:potential}
 U_{0}(q) = \epsilon(e^{-2q}-2e^{-q}+0.01q^2+1),
 \end{equation}
we have
\begin{equation*}
\epsilon(e^{-2q}-2e^{-q}+0.01q^2+1) = \epsilon[(e^{-q}-1)^2+0.01q^2]\geq 0.
\end{equation*}
For the second condition, substituting $U_{0}$ in \eqref{eq:cond_2} by \eqref{eq:potential} the inequality becomes $\forall q \in \mathbb{R}$
\begin{equation*}
-qe^{-2q} + qe^{-q} - \beta e^{-2q} + 2\beta e^{-q}+\left[0.01(1-\beta)-\frac{\gamma^2\beta(2-\beta)}{8\epsilon(1-\beta)}\right]q^2 \geq -\alpha+\beta.
\end{equation*}
It is enough to show that  
\begin{enumerate}
	\item the function $f(q)=-qe^{-2q} + qe^{-q} - \beta e^{-2q} + 2\beta e^{-q}$ has a lower bound, and
	\item for any given $\gamma,\epsilon>0$, $\exists$ $\beta\in (0,1)$ such that
	\begin{equation}\label{eq:cond_3}
		0.01(1-\beta)-\frac{\gamma^2\beta(2-\beta)}{8\epsilon(1-\beta)}>0.
	\end{equation}
\end{enumerate}
Since $f(q)$ is continuous, by checking its limits on both sides
\begin{equation*}
\lim_{q\rightarrow +\infty} f(q) =0, \quad \lim_{q\rightarrow -\infty} f(q) = \lim_{q\rightarrow -\infty} -(q+\beta)\left[\left(e^{-q}-\frac{1}{2}\right)^2- \frac{1}{4}\right]+\beta e^{-q} = +\infty,
\end{equation*}
one can see that $f(q)$ must have a lower bound. Given $\epsilon,\gamma>0$, \eqref{eq:cond_3} can be satisfied by taking $\beta \in \left(0, 1- \sqrt{1-\frac{0.08}{0.08+\gamma^2/\epsilon}}\right)$. Thus, one can conclude that both conditions can be satisfied and the Langevin dynamics \eqref{Lan_sys} is indeed ergodic under all possible value of the parameters.

}
\nocite{*}


\begin{bibdiv}
\begin{biblist}

\bib{am:08}{article}{
      author={Abramov, R.V.},
      author={Majda, A.J.},
       title={New approximations and tests of linear fluctuation-response for
  chaotic nonlinear forced-dissipative dynamical systems},
        date={2008},
     journal={Journal of Nonlinear Science},
      volume={18},
      number={3},
       pages={303\ndash 341},
         url={http://dx.doi.org/10.1007/s00332-007-9011-9},
}

\bib{am:09}{article}{
      author={Abramov, R.V.},
      author={Majda, A.J.},
       title={A new algorithm for low-frequency climate response},
        date={2009},
     journal={Journal of the Atmospheric Sciences},
      volume={66},
      number={2},
       pages={286\ndash 309},
         url={http://dx.doi.org/10.1175/2008JAS2813.1},
}

\bib{am:12}{article}{
      author={Abramov, R.V.},
      author={Majda, A.J.},
       title={Low-frequency climate response of quasigeostrophic wind-driven
  ocean circulation},
        date={2011},
     journal={Journal of Physical Oceanography},
      volume={42},
      number={2},
       pages={243\ndash 260},
         url={http://dx.doi.org/10.1175/JPO-D-11-052.1},
}

\bib{Anderson:09}{article}{
      author={Anderson, David~F},
      author={Mattingly, Jonathan~C},
       title={A weak trapezoidal method for a class of stochastic differential
  equations},
        date={2009},
     journal={Communications in Mathematical Sciences 9 (1): 301-18},
}

\bib{Berry2016ACHA}{article}{
      author={Berry, Tyrus},
      author={Harlim, John},
       title={Variable bandwidth diffusion kernels},
        date={2016},
     journal={Appl. Comput. Harmon. Anal.},
      volume={40},
      number={1},
       pages={68\ndash 96},
}

\bib{Berry2017MWR}{article}{
      author={Berry, Tyrus},
      author={Harlim, John},
       title={Correcting biased observation model error in data assimilation},
        date={2017},
     journal={Mon. Wea. Rev.},
      volume={145},
      number={7},
       pages={2833\ndash 2853},
}

\bib{bp:16}{article}{
      author={Borgonovo, Emanuele},
      author={Plischke, Elmar},
       title={Sensitivity analysis: A review of recent advances},
        date={2016},
     journal={European Journal of Operational Research},
      volume={248},
      number={3},
       pages={869 \ndash  887},
}

\bib{Coifman2006ACHA}{article}{
      author={Coifman, Ronald~R},
      author={Lafon, St{\'e}phane},
       title={Diffusion maps},
        date={2006},
     journal={Appl. Comput. Harmon. Anal.},
      volume={21},
      number={1},
       pages={5\ndash 30},
}

\bib{gamerman:06}{book}{
      author={Gamerman, D.},
      author={Lopes, H.F.},
       title={{Markov Chain Monte Carlo:} stochastic simulation for {Bayesian}
  inference, second edition},
      series={Chapman \& Hall/CRC Texts in Statistical Science},
   publisher={Taylor \& Francis},
        date={2006},
}

\bib{gww:2016}{article}{
      author={Gottwald, G.A.},
      author={Wormell, J.P.},
      author={Wouters, J.},
       title={On spurious detection of linear response and misuse of the
  fluctuation--dissipation theorem in finite time series},
        date={2016},
     journal={Physica D: Nonlinear Phenomena},
      volume={331},
       pages={89\ndash 101},
}

\bib{gbm:08}{article}{
      author={Gritsun, A.},
      author={Branstator, G.},
      author={Majda, A.J.},
       title={Climate response of linear and quadratic functionals using the
  fluctuation-dissipation theorem},
        date={2008},
     journal={Journal of the Atmospheric Sciences},
      volume={65},
      number={9},
       pages={2824\ndash 2841},
         url={http://dx.doi.org/10.1175/2007JAS2496.1},
}

\bib{hairer2010simple}{article}{
      author={Hairer, Martin},
      author={Majda, Andrew~J},
       title={A simple framework to justify linear response theory},
        date={2010},
     journal={Nonlinearity},
      volume={23},
      number={4},
       pages={909},
}

\bib{Hannachi:01}{article}{
      author={Hannachi, A},
      author={O'Neill, A},
       title={Atmospheric multiple equilibria and non-{Gaussian} behaviour in
  model simulations},
        date={2001},
     journal={Quarterly Journal of the Royal Meteorological Society},
      volume={127},
      number={573},
       pages={939\ndash 958},
}

\bib{hh:18}{article}{
      author={Hao, W.},
      author={Harlim, J.},
       title={An equation-by-equation algorithm for solving the
  multidimensional moment constrained maximum entropy problem},
        date={2018},
     journal={Comm. App. Math. Comp. Sci.},
      volume={13},
      number={2},
       pages={189\ndash 214},
}

\bib{ebecodes}{misc}{
      author={Hao, W.},
      author={Harlim, J.},
       title={{Supplementary material: MATLAB software for the
  Equation-by-equation method for solving the maximum entropy problem}},
         how={\url{https://github.com/whao2008/EBE}},
        date={2018},
}

\bib{HLZ:17}{article}{
      author={Harlim, John},
      author={Li, Xiantao},
      author={Zhang, He},
       title={A parameter estimation method using linear response statistics},
        date={2017},
     journal={Journal of Statistical Physics},
      volume={168},
       pages={146\ndash 170},
}

\bib{isaacson2012analysis}{book}{
      author={Isaacson, Eugene},
      author={Keller, Herbert~Bishop},
       title={Analysis of numerical methods},
   publisher={Courier Corporation},
        date={2012},
}

\bib{jaynes:57}{article}{
      author={Jaynes, E.T.},
       title={Information theory and statistical mechanics},
        date={1957},
     journal={Physical review},
      volume={106},
      number={4},
       pages={620},
}

\bib{jh:18}{article}{
      author={Jiang, Shixiao},
      author={Harlim, John},
       title={Parameter estimation with data-driven nonparametric likelihood
  functions},
        date={2018},
     journal={arXiv:1804.03272},
}

\bib{Kelley}{book}{
      author={Kelley, Carl~T},
       title={Iterative methods for optimization},
   publisher={SIAM},
        date={1999},
}

\bib{leith:75}{article}{
      author={Leith, C.~E.},
       title={Climate response and fluctuation dissipation},
        date={1975},
     journal={Journal of the Atmospheric Sciences},
      volume={32},
      number={10},
       pages={2022\ndash 2026},
  url={http://dx.doi.org/10.1175/1520-0469(1975)032<2022:CRAFD>2.0.CO;2},
}

\bib{lyubartsev1995calculation}{article}{
      author={Lyubartsev, Alexander~P},
      author={Laaksonen, Aatto},
       title={Calculation of effective interaction potentials from radial
  distribution functions: A reverse {Monte Carlo} approach},
        date={1995},
     journal={Physical Review E},
      volume={52},
      number={4},
       pages={3730},
}

\bib{Ma:16}{article}{
      author={Ma, Lina},
      author={Li, Xiantao},
      author={Liu, Chun},
       title={The derivation and approximation of coarse-grained dynamics from
  {Langevin} dynamics},
        date={2016},
     journal={The Journal of Chemical Physics},
      volume={145},
      number={20},
       pages={204117},
}

\bib{majda2016}{book}{
      author={Majda, A.J.},
       title={Introduction to turbulent dynamical systems in complex systems},
   publisher={Springer},
        date={2016},
}

\bib{mq:16a}{article}{
      author={Majda, A.J.},
      author={Qi, D.},
       title={Improving prediction skill of imperfect turbulent models through
  statistical response and information theory},
        date={2016},
     journal={Journal of Nonlinear Science},
      volume={26},
      number={1},
       pages={233\ndash 285},
}

\bib{mq:17}{article}{
      author={Majda, A.J.},
      author={Qi, D.},
       title={Strategies for reduced-order models for predicting the
  statistical responses and uncertainty quantification in complex turbulent
  dynamical systems},
        date={2018},
     journal={SIAM Review},
      volume={60},
      number={3},
       pages={491\ndash 549},
}

\bib{mw:10}{article}{
      author={Majda, A.J.},
      author={Wang, X.},
       title={{Linear response theory for statistical ensembles in complex
  systems with time-periodic forcing}},
        date={2010},
     journal={Comm. Math. Sci.},
      volume={8},
      number={1},
       pages={187\ndash 216},
}

\bib{Marzouk:09}{article}{
      author={Marzouk, Youssef},
      author={Xiu, Dongbin},
       title={A stochastic collocation approach to {Bayesian} inference in
  inverse problems},
        date={2009},
     journal={Commun. Comput. Phys.},
      volume={6},
       pages={826\ndash 847},
}

\bib{Marzouk:07}{article}{
      author={Marzouk, Youssef~M},
      author={Najm, Habib~N},
      author={Rahn, Larry~A},
       title={Stochastic spectral methods for efficient {Bayesian} solution of
  inverse problems},
        date={2007},
     journal={Journal of Computational Physics},
      volume={224},
      number={2},
       pages={560\ndash 586},
}

\bib{Mattingly:02}{article}{
      author={Mattingly, Jonathan~C},
      author={Stuart, Andrew~M},
      author={Higham, Desmond~J},
       title={Ergodicity for {SDEs} and approximations: locally {Lipschitz}
  vector fields and degenerate noise},
        date={2002},
     journal={Stochastic processes and their applications},
      volume={101},
      number={2},
       pages={185\ndash 232},
}

\bib{noid2013perspective}{article}{
      author={Noid, W.G.},
       title={Perspective: Coarse-grained models for biomolecular systems},
        date={2013},
     journal={The Journal of Chemical Physics},
      volume={139},
      number={9},
       pages={090901},
}

\bib{ortega1970iterative}{book}{
      author={Ortega, James~M},
      author={Rheinboldt, Werner~C},
       title={Iterative solution of nonlinear equations in several variables},
   publisher={SIAM},
        date={1970},
      volume={30},
}

\bib{parzen1962estimation}{article}{
      author={Parzen, Emanuel},
       title={On estimation of a probability density function and mode},
        date={1962},
     journal={The annals of mathematical statistics},
      volume={33},
      number={3},
       pages={1065\ndash 1076},
}

\bib{Pavliotis:16}{book}{
      author={Pavliotis, Grigorios~A},
       title={Stochastic processes and applications},
   publisher={Springer},
        date={2016},
}

\bib{qm:16a}{article}{
      author={Qi, Di},
      author={Majda, Andrew~J},
       title={Low-dimensional reduced-order models for statistical response and
  uncertainty quantification: two-layer baroclinic turbulence},
        date={2016},
     journal={Journal of the Atmospheric Sciences},
      volume={73},
      number={12},
       pages={4609\ndash 4639},
}

\bib{rosenblatt1956remarks}{article}{
      author={Rosenblatt, Murray},
       title={Remarks on some nonparametric estimates of a density function},
        date={1956},
     journal={The Annals of Mathematical Statistics},
       pages={832\ndash 837},
}

\bib{srk:12}{article}{
      author={Sheppard, P.W.},
      author={Rathinam, M.},
      author={Khammash, M.},
       title={A pathwise derivative approach to the computation of parameter
  sensitivities in discrete stochastic chemical systems},
        date={2012},
     journal={The Journal of Chemical Physics},
      volume={136},
      number={3},
       pages={034115},
}

\bib{Singler:08}{article}{
      author={Singler, John~R},
       title={Differentiability with respect to parameters of weak solutions of
  linear parabolic equations},
        date={2008},
     journal={Mathematical and Computer Modelling},
      volume={47},
      number={3},
       pages={422\ndash 430},
}

\bib{smola2007hilbert}{inproceedings}{
      author={Smola, Alex},
      author={Gretton, Arthur},
      author={Song, Le},
      author={Sch{\"o}lkopf, Bernhard},
       title={A hilbert space embedding for distributions},
organization={Springer},
        date={2007},
   booktitle={International conference on algorithmic learning theory},
       pages={13\ndash 31},
}

\bib{sobol:93}{article}{
      author={Sobol, Ilya~M},
       title={Sensitivity estimates for nonlinear mathematical models},
        date={1993},
     journal={Mathematical Modelling and Computational Experiments},
      volume={1},
      number={4},
       pages={407\ndash 414},
}

\bib{Adam:17}{article}{
      author={Telatovich, Adam},
      author={Li, Xiantao},
       title={The strong convergence of operator-splitting methods for the
  {Langevin} dynamics model},
        date={2017},
     journal={arXiv preprint arXiv:1706.04237},
}

\bib{Toda-Kubo-2}{book}{
      author={Toda, M.},
      author={Kubo, R.},
      author={Hashitsume, N.},
       title={Statistical physics {II.} nonquilibrium statistical mechanics},
   publisher={Springer},
        date={1983},
}

\end{biblist}
\end{bibdiv}

\end{document}